\newtheorem{thm}{Theorem}
\newtheorem{prop}{Proposition}
\newtheorem{cor}{Corollary}
\newtheorem{lem}{Lemma}
\def\Z{{\mathbb Z}}
\def\G{{\mathcal G}}
\def\H{{\mathcal H}}
\def\G{\overline{G}}
\def\hat G{\widehat{G}}
\def\H{\overline{H}}
\def\P{{\overline P}}
\def\hatP{{\widehat P}}
\def\C{{\overline C}}
\def\calP{{\mathcal P}}
\def\calC{{\mathcal C}}
\def\calK{{\mathcal K}}
\def\1{\bigskip\noindent}
\def\2{\bigskip}
\def\example{\medskip\noindent{\bf Example.} }
\def\definition{\medskip\noindent{\bf Definition.} }
\begin{document}

\begin{center}
\LARGE {\bf \textsc{Posets of Geometric Graphs}} 
\end{center}\bigskip

\begin{center}
\textsc{Debra L. Boutin} 

\textsc{Department of Mathematics}

\textsc{Hamilton College,  Clinton,  NY 13323}

\textsc {\sl dboutin@hamilton.edu}

\end{center}

\begin{center}
\textsc{Sally Cockburn}

\textsc{Department of Mathematics}

\textsc{Hamilton College,  Clinton,  NY 13323}

\textsc {\sl scockbur@hamilton.edu}

\end{center}

\begin{center}
\textsc{Alice M. Dean}

\textsc{Mathematics and Computer Science Department}

\textsc{Skidmore College,  Saratoga Springs,  NY 12866}

\textsc {\sl adean@skidmore.edu}

\end{center}

\begin{center}
\textsc{Andrei Margea}

\textsc{Department of Computer Science}

\textsc{University of Texas,  Austin,  TX 78712}

 \textsc {\sl utistu87@cs.utexas.edu}

\end{center}

\begin{abstract} A {\it geometric graph} $\G$ is a simple graph drawn in the plane,  on points in general position, with straight-line edges.   We call $\G$  a {\it geometric realization} of the underlying abstract graph $G$.  A {\it geometric homomorphism} $f:\overline{G} \to \overline{H}$ is a vertex map that preserves adjacencies and crossings (but not necessarily non-adjacencies or non-crossings).  This work uses geometric homomorphisms to introduce a partial order on the set of isomorphism classes of geometric realizations of an abstract graph $G$.  Set $\G\preceq \hat G$ if $\G$ and $\hat G$ are geometric realizations of $G$ and there is a vertex-injective geometric homomorphism $f:\G \to \hat G$.   This paper develops tools to determine when two geometric realizations are comparable.  Further, for $3\leq n \leq 6$, this paper provides the isomorphism classes of geometric realizations of $P_n,  C_n$ and $K_n$, as well as the Hasse diagrams of the  geometric homomorphism posets (resp., $\calP_n, \calC_n, \calK_n$)  of these graphs.   The paper also provides the following results for general $n$:  each of $\calP_n$ and $\calC_n$ has a unique minimal element and a unique maximal element; if $k\leq n$ then $\calP_k$  (resp., $\calC_k$) is a subposet of $\calP_n$ (resp., $\calC_n$); and $\calK_n$ contains a chain of length $n-2$.      \end{abstract}


\section{Introduction}

The topic of graph homomorphisms has been a subject of growing interest; for an excellent survey of the area see \cite{HN}.  In \cite{BC}, Boutin and Cockburn extend the theory of graph homomorphisms to geometric graphs.  In this paper we use geometric homomorphisms to define and study posets of geometric realizations of a given abstract graph. Throughout this work the term graph means simple graph.

A geometric graph $\overline{G}$ is a graph drawn in the plane, on points in general position, with straight-line edges.  What we care about in a geometric graph is which pairs of vertices are adjacent and which pairs of edges cross.  In particular, two geometric graphs are said to be isomorphic if there is a bijection between their vertex sets that preserves adjacencies, non-adjacencies, crossings, and non-crossings.

A natural way to extend the idea of abstract graph homomorphism to the context of geometric graphs is to define a geometric homomorphism as a vertex map $f:\overline{G} \to \overline{H}$ that preserves both adjacencies and crossings (but not necessarily non-adjacencies or non-crossings).  If such a map exists we write $\overline{G}\to\overline{H}$ and say that $\overline{G}$ is (geometrically) homomorphic to $\overline{H}$.  There are many similarities between abstract graph homomorphisms and geometric graph homomorphisms, but there are also great contrasts.  Results that are straightforward in the context of abstract graphs can become complex in the context of geometric graphs.  

In abstract graph homomorphism theory, two vertices cannot be identified under any homomorphism if and only if they are adjacent.  
However, in \cite{BC} we show that there are  additional reasons why two vertices cannot be identified under any geometric homomorphism:
 if they are involved in a common edge crossing;  if they are endpoints of an odd length path each edge of which is crossed by a common edge; if they are endpoints of a path of length two whose edges cross all edges of an odd length cycle.
 This list is likely not exhaustive.

In abstract graph homomorphism theory, a graph is not homomorphic to a graph on fewer vertices if and only if it is a complete graph.  In geometric homomorphism theory, there are many graphs other than complete graphs that are not homomorphic to any geometric graph on fewer vertices.  For example, since vertices involved in a common crossing cannot be identified by any  geometric homomorphism, there is no geometric homomorphism of a non-plane realization of  $C_4$ into a geometric graph on fewer than four vertices.

In abstract graph homomorphism theory, every graph on $n$ vertices is homomorphic to $K_n$.  However, not all geometric graphs on $n$ vertices are homomorphic to a given realization of $K_n$.  In fact, two different geometric realizations of $K_n$ are not necessarily homomorphic to each other.    For example, consider the three geometric realizations of $K_6$ given in Figure~\ref{fig:three_k6}.
Note that $\G_2$ has a vertex with all incident edges crossed, while $\G_3$  does not;  this can be used to prove that there is no geometric homomorphism from $\G_2$ to $\G_3$.  Also, $\G_3$  has more crossings than $\G_2$; this can be used to prove that there is no geometric homomorphism from $\G_3$ to $\G_2$. On the other hand we can easily argue that while there is no geometric homomorphism from $\G_2$ to $\G_1$, the map $f:\G_1\to \G_2$ implied by the given vertex numbering schemes is a geometric homomorphism.

  \begin{figure}[htbp] 
\centerline{\epsfig{file = 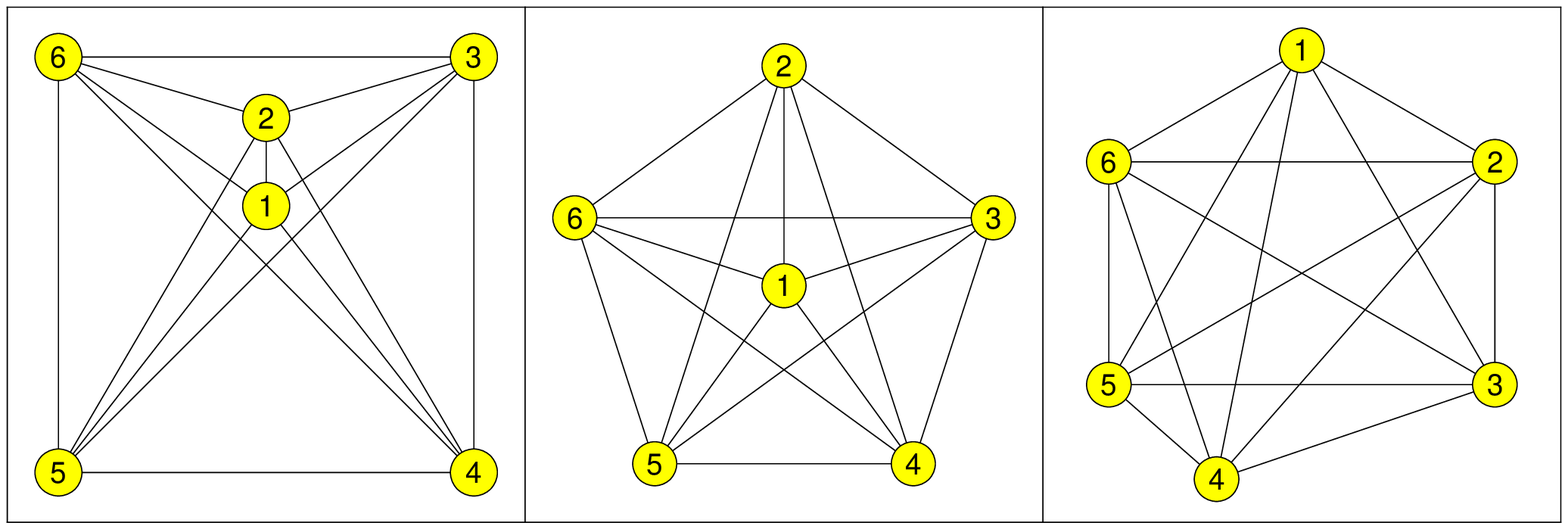,   width = .8\textwidth}} 
  \hspace{1.25in}$\G_1$\hspace{1.25in} $\G_2$\hspace{1.25in} $\G_3$
     \caption{Three geometric realizations of $K_6$}
     \label{fig:three_k6}
  \end{figure}

Since homomorphisms are reflexive and transitive, it is natural to want to use them to induce a partial order.  That is, we would like to define $\G\preceq \H$ when $\G\to \H$.    However, homomorphisms are not necessarily antisymmetric.  It is easy to find geometric (or abstract) graphs $\G$ and $\H$ so that $\G\to \H$ and $\H\to \G$ but $\G$  and  $\H$ are not isomorphic. For example, let $\H$ be any geometric graph with a non-isolated vertex $z$.  Add a vertex $x$ and edge $e$ between $x$ and $z$, positioned so that $e$ crosses no other edge of $\H$.  Call this new graph $\G$.  Identifying $x$ with any neighbor of $z$ gives us $\G \to \H$.  The fact that  $\H$ is a subgraph of $\G$ gives us $\H\to \G$. But clearly $\G$ and  $\H$ are not isomorphic. Thus  graph homomorphisms (whether abstract or geometric) do not induce a partial order since they are not antisymmetric.

In \cite{HN}, Hell and Ne{\v{s}}et{\v{r}}il  solve this problem for abstract graphs by using homomorphisms to define a partial order on the class of non-isomorphic cores of graphs. The {\it core} of an (abstract or geometric) graph is the smallest subgraph to which it is homomorphic. In the example above, $\G$ and $\H$ have isomorphic cores. In this paper  we  solve the problem  by using geometric homomorphisms to define a partial order on the set of geometric realizations of a given abstract graph.  That is to say, we let $\G\preceq \H$ if there is a geometric homomorphism $f:\G\to \H$ that induces an isomorphism on the underlying abstract graphs.  This definition ensures that $\preceq$ is antisymmetric.

The paper is organized as follows. In Section \ref{sect:basics} we give  formal definitions and develop tools that help determine whether two geometric realizations are homomorphic.  In Section \ref{sect:paths} we determine the isomorphism classes and resulting poset, $\calP_n$, of realizations of the path $P_n$ with $2\leq n\leq 6$. Additionally we provide the following  results:  $\calP_n$ has a unique minimal and a unique maximal element; if $k\leq n$, then $\calP_k$ is a subposet of $\calP_n$; for each positive integer $c$ less than or equal to the maximum number of crossings, there is at least one realization of $P_n$ with precisely $c$ crossings.  In Section \ref{sect:cycles} we determine the isomorphism classes and resulting poset, $\calC_n$, of realizations of the cycle $C_n$ with $3\leq n\leq 6$. We also show that  $\calC_n$ has a unique minimal element and a unique minimal element.  Further if $k\leq n$, then $\calC_k$ is a subposet of $\calC_n$.  In Section \ref{sect:cliques} we determine the isomorphism classes and resulting poset, $\calK_n$, of realizations of the complete graph $K_n$ with $3\leq n\leq 6$, and we prove that for all $n$,  $\calK_n$ contains a chain of length $n-2$.  In Section \ref{sect:questions} we provide some open questions.

\section{Basics, Tools, Examples}\label{sect:basics}

A {\it geometric graph} $\overline{G}$ is a simple graph $G = \big( V(G), E(G) \big)$ together with a straight-line drawing of  $G$  in the plane with vertices in general position (no three vertices are collinear and no three edges cross at a single point). A geometric graph $\G$ with underlying abstract graph $G$ is  called a {\it geometric realization} of $G$; the term {\em rectilinear drawing} is also used in the literature. Two geometric realizations of a graph are considered the same if they have the same vertex adjacencies and edge crossings.  This is formalized below by extending the definition of graph isomorphism in a natural way to geometric graphs.

\definition
A geometric isomorphism, denoted $f:\overline{G} \to \overline{H}$, is a  bijection $f:V(\overline G) \to V(\overline H)$ such that for all $u, v, x, y \in V(\overline G)$,
	\begin{enumerate}
	\item $uv \in E(\overline G)$ if and only if $f(u)f(v) \in E(\overline H)$, and
	\item $xy$ crosses $uv$ in $\overline{G}$ if and only if $f(x)f(y)$ crosses $f(u)f(v)$ in $\overline{H}$.
	\end{enumerate} 

If there exists a geometric isomorphism $f:\overline{G} \to \overline{H}$, we write $\G \cong \overline{H}$.  Geometric isomorphism clearly defines an equivalence relation on the set of  geometric realizations of a simple graph $G$.

Note that in \cite{HH, HT} Harborth, et al., give a definition for  isomorphism  of geometric graphs that is stricter than the one given here. They require that a geometric 
isomorphism also preserve regions and parts of edges.  Figure~\ref{fig:two_c6} shows two geometric realizations of $C_6$ that have the same crossings (and so are isomorphic by our definition) but have different regions (and so are not isomorphic in the sense of Harborth).  One consequence of this is that for a given abstract graph there are (potentially) fewer isomorphism classes of realizations under our definition than under that of Harborth. 

\begin{figure}[htbp]
{\epsfig{file = 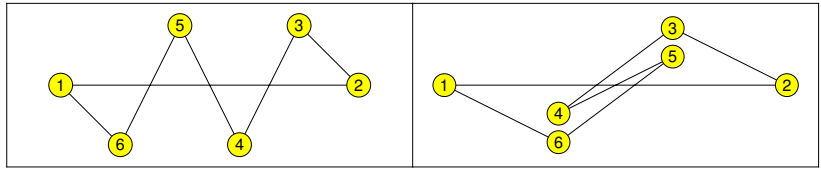,   width = .8\textwidth}} 
  \centering
  \caption{Realizations with the same crossings but different regions}
  \label{fig:two_c6}
\end{figure}

We similarly extend the definition of graph homomorphism to geometric graphs.
	
\definition
 A geometric homomorphism, denoted $f:\overline{G} \to \overline{H}$, is a  function $f:V(\overline  G) \to V( \overline  H)$ such that for all $u, v, x, y \in V(\overline  G)$,
	\begin{enumerate}
	\item if $uv \in E(\overline  G)$, then $ f(u)f(v) \in E(\overline  H)$, and
	\item if $xy$ crosses $uv$ in $\overline{G}$, then  $f(x)f(y)$ crosses $f(u)f(v)$ in $\overline{H}$.
	\end{enumerate} 

If there is a geometric homomorphism $f:\G\to \H$,  we write $\G \stackrel{f}{\to} \H$ or simply $\G\to \H$, and we  say that $\G$ is (geometrically) homomorphic to $\H$.

\definition Let $\overline{G}$ and $ \widehat{G}$ be geometric realizations of a  graph $G$. Set $\overline{G} \preceq \widehat{G}$ (or $\G \stackrel{f}{\preceq}\hat G$) if  there exists a (vertex) injective geometric homomorphism $f:\overline{G} \to \widehat{G}$.

Note that since the abstract graphs underlying $\G$ and $\hat G$ are the same, the fact that $f$ is 
injective and preserves adjacency means that $f$ induces an isomorphism from $G$ to itself.  It is not difficult to see that this relation is reflexive, transitive, antisymmetric, and hence a partial order.

\definition The {\it geometric homomorphism poset} of a  graph  is the set of geometric isomorphism classes of its realizations partially ordered by the relation $\preceq$.

\subsection{The Edge Crossing and Line\slash Crossing Graphs}\label{subsec:LEX}

Recall that the {\it line graph} of an abstract graph $G$, denoted $L(G)$, is the abstract graph whose vertices correspond to edges of $G$, with adjacency when the corresponding edges of $G$ 
are adjacent.
In this section, we define the {\it edge crossing graph}, which is similar to the line graph except that it encodes edge crossings rather than edge adjacencies.  In \cite{N} Ne{\v{s}}et{\v{r}}il, et al.,   proved a correspondence between graph homomorphisms and  homomorphisms of their line graphs.  We generalize this to geometric graphs and the union of their line and crossing graphs.

\definition Let $\G$ be a geometric graph.  Define the {\it edge crossing graph}, denoted $EX(\G)$, to be the abstract graph whose vertices correspond to edges of $\G$, with  adjacency when the corresponding edges of $\G$ cross. Define the {\it line\slash crossing graph}, denoted $LEX(\G)$, to be the 2-edge-colored abstract graph whose vertices correspond to the edges of $\G$, with red edges in $LEX(\G)$ corresponding to adjacent edge pairs
 in $\G$ and blue edges in $LEX(\G)$ corresponding to crossing edge pairs in $\G$.

In other words, the line\slash crossing graph of $\G$ is the union of the line graph of $G$ and the edge crossing graph of $\G$, with an added edge coloring to keep the meanings of these edges clear.  Figure \ref{fig:P6} shows a geometric realization of $P_6$ and its line\slash crossing graph.

The following theorem is well-known and important to our proofs.

\begin{thm}\label{thm:linegraph}\rm \cite{W}  If $G$ has more than four vertices, then $G\cong H \iff L(G)\cong L(H)$.  \end{thm}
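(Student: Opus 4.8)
The forward direction requires no hypothesis on the number of vertices: an isomorphism $\varphi\colon G\to H$ induces a bijection on edge sets that preserves the ``share an endpoint'' relation, hence an isomorphism $L(G)\to L(H)$, so I would dispose of it in one line. All the content is in the converse, and the plan is to reconstruct $G$, up to isomorphism, from the abstract graph $L(G)$ alone and then transport that reconstruction along a given isomorphism $L(G)\cong L(H)$ to conclude $G\cong H$. (As is standard I would take $G$ and $H$ connected; Whitney's classical exception $L(K_3)\cong L(K_{1,3})\cong K_3$ is the reason a bound on the number of vertices is needed at all.)

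First I would record the one structural fact the reconstruction rests on. For a vertex $v$ of $G$, the set $S_v$ of edges incident with $v$ is a clique of $L(G)$, since any two such edges share the endpoint $v$. Because $G$ is simple, two edges meet in at most one vertex, so every edge of $L(G)$ lies in exactly one $S_v$, and every vertex of $L(G)$ (an edge $uv$ of $G$) lies in exactly the two cliques $S_u, S_v$ --- in just one if an endpoint has degree $1$. Thus $\{S_v : \deg_G v \ge 2\}$ partitions $E(L(G))$ into cliques with every vertex in at most two parts; I would call this the \emph{canonical clique partition}. Conversely, from any partition of $E(L(G))$ with this ``$\le 2$ parts per vertex'' property one builds a graph --- one vertex per part, an extra vertex for each vertex of $L(G)$ lying in only one part, and an edge joining two of these vertices whenever the corresponding parts meet --- and applied to the canonical partition this procedure returns $G$.

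The theorem then reduces to showing that for connected $G$ with more than four vertices the canonical clique partition is the \emph{only} such partition of $E(L(G))$: once that is known, any isomorphism $\psi\colon L(G)\to L(H)$ must send the canonical partition of $L(G)$ to that of $L(H)$, and matching parts to parts (with the extra-vertex correction) produces an adjacency-preserving bijection $V(G)\to V(H)$. To prove the uniqueness I would analyze the only thing that can go wrong: a triangle of $L(G)$ arises either as a star $S_v$ with $\deg_G v = 3$ or from a triangle of $G$, and it could be kept intact as a part or split into its three one-edge cliques, which leads to two different reconstructions. I would distinguish these by how a candidate part attaches to the rest of $L(G)$ --- roughly, the three vertices of a genuine star-part have disjoint ``external'' neighborhoods whereas those of a triangle-part overlap --- and then check that this resolves the ambiguity for every graph except a short list of small ones (e.g.\ $K_4$, where the two readings give the same graph anyway) that the hypothesis excludes. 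Nailing down exactly which small graphs make this ``triangle flip'' genuinely change the isomorphism type is the heart of Whitney's theorem and is the step I expect to be the main obstacle; in the paper I would of course simply invoke \cite{W}, but the edge-clique-partition argument above is how I would establish it directly.
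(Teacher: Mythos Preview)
The paper does not prove this theorem at all: it is stated as a well-known result and attributed to \cite{W}, with no argument given. So there is no ``paper's proof'' to compare against; your proposal supplies what the paper deliberately omits.

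Your outline is the standard Whitney--Krausz route and is sound in its essentials. The reduction to connected graphs is necessary (isolated vertices are invisible in $L(G)$, and the statement as printed in the paper is already slightly loose on this point), and you are right that the entire difficulty lives in the uniqueness of the edge-clique partition. One small correction: you write that every vertex of $L(G)$ lies in ``exactly the two cliques $S_u,S_v$ --- in just one if an endpoint has degree $1$,'' but strictly speaking $S_u$ is still defined when $\deg_G u=1$; it is simply a singleton clique contributing no edges to the partition, so the bookkeeping with ``extra vertices'' should be phrased in terms of parts of size one rather than vertices lying in only one part. Beyond that, the step you flag as the main obstacle --- ruling out the triangle ambiguity for all connected $G$ with $|V(G)|>4$ --- is exactly where the work is, and your proposed resolution via the external neighborhoods of a candidate triangle-part is the right idea. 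Since the paper only needs the statement and invokes the literature, your level of detail already exceeds what is required here.
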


\begin{prop} \label{prop:preceqiff}\rm Let $\G$ and $\H$ be geometric graphs on more than four vertices.  Then $\G\preceq\H$  if and only if there exists a color-preserving graph homomorphism $\tilde f: LEX(\G) \to LEX(\H)$ that restricts to an isomorphism from $L(G)$ to $L(H)$.\end{prop}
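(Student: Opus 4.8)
The plan is to move back and forth between a vertex map $f\colon\G\to\H$ and the edge map $\tilde f$ it induces on $E(G)=V(LEX(\G))=V(LEX(\H))$ via $\tilde f(uv)=f(u)f(v)$, using Whitney's theorem to reconstruct $f$ from $\tilde f$ in the nontrivial direction. Throughout, $\G$ and $\H$ are realizations of a single abstract graph $G$ with $|V(G)|>4$, so their edge sets, their line graphs, and the abstract notion of adjacency all coincide.

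$(\Rightarrow)$ Suppose $\G\stackrel{f}{\preceq}\H$. As observed just before the statement, $f$ is an automorphism of $G$, so the edge map $\tilde f$ is a well-defined bijection of $E(G)$ and, regarded as a map $L(G)\to L(H)$, is an isomorphism. It remains to check that $\tilde f\colon LEX(\G)\to LEX(\H)$ is color-preserving. If $uv,\,vw$ span a red edge of $LEX(\G)$ (adjacent in $\G$), their images share the vertex $f(v)$ and so span a red edge of $LEX(\H)$. If $uv,\,xy$ span a blue edge of $LEX(\G)$ (a crossing pair), then $f$, being a geometric homomorphism, forces $f(u)f(v)$ to cross $f(x)f(y)$ in $\H$, a blue edge of $LEX(\H)$; these images cannot also be adjacent, since crossing segments share no endpoint and $f$ is injective, so there is no color conflict. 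Hence $\tilde f$ is the required map. (Note this direction does not invoke Theorem~\ref{thm:linegraph}.)

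$(\Leftarrow)$ Conversely, suppose $\tilde f\colon LEX(\G)\to LEX(\H)$ is color-preserving and, regarded as a map on $E(G)=V(L(G))$, is an isomorphism $L(G)\to L(H)$. Since $|V(G)|>4$, I would invoke the strong form of Whitney's isomorphism theorem (the strengthening of Theorem~\ref{thm:linegraph}): this line-graph isomorphism is induced by a graph isomorphism $f\colon G\to H$ — necessarily an automorphism of $G$ — so that $f(u)f(v)=\tilde f(uv)$ for every edge $uv$. Being a graph automorphism, $f$ is injective and preserves adjacency. For crossings: if $uv$ crosses $xy$ in $\G$, then $\{uv,xy\}$ is a blue edge of $LEX(\G)$, so color-preservation of $\tilde f$ makes $\{f(u)f(v),\,f(x)f(y)\}=\{\tilde f(uv),\tilde f(xy)\}$ a blue edge of $LEX(\H)$, i.e. $f(u)f(v)$ crosses $f(x)f(y)$ in $\H$. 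Thus $f$ is an injective geometric homomorphism and $\G\preceq\H$.

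The main obstacle lies entirely in the reverse direction and is the appeal to Whitney: Theorem~\ref{thm:linegraph} as quoted only asserts the \emph{existence} of an abstract isomorphism $G\cong H$, whereas the argument needs the \emph{given} isomorphism of line graphs to be realized by a vertex bijection, and it needs $|V(G)|>4$ both to avoid the $K_3$ versus $K_{1,3}$ ambiguity and to pin the vertex map down. If one preferred not to cite the strong form, one would recover $f$ by hand: for each vertex $v$ of $G$ the edges at $v$ span a clique (or, when $\deg v\le 2$, an edge or a single vertex) in $L(G)$, and $\tilde f$ carries this to the analogous structure at a unique vertex $f(v)$ of $H$; verifying that $v\mapsto f(v)$ is a well-defined bijection with $f(u)f(v)=\tilde f(uv)$ — in particular sorting out the ``triangle cliques'' that are not stars — is exactly the content of Whitney's proof. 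Everything else is routine bookkeeping about the red and blue color classes of $LEX$.
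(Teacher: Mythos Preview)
The paper does not supply a proof of this proposition; it states Whitney's theorem (Theorem~\ref{thm:linegraph}) immediately beforehand as the intended tool and then moves on. Your argument is exactly the natural one the paper is gesturing at, and it is correct: pass between the vertex map $f$ and its induced edge map $\tilde f$, with Whitney's theorem doing the work in the reverse direction.

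Your identification of the one genuine subtlety is on target. The paper's Theorem~\ref{thm:linegraph} is stated only in its weak form ($G\cong H \iff L(G)\cong L(H)$), whereas the $(\Leftarrow)$ direction needs the strong form: the \emph{given} line-graph isomorphism $\tilde f$ is induced by a vertex isomorphism $f$, not merely that some vertex isomorphism exists. You flag this and sketch how one would prove it directly (stars versus triangle cliques), which is precisely Whitney's argument. That is the only nontrivial step, and you have handled it.
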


Note that using  Proposition~\ref{prop:preceqiff} requires that $\G$ and $\H$ have isomorphic underlying abstract graphs. Thus we may assume that  $\G$ and $\H$ are geometric realizations of the same graph. 

An alternate way to phrase Proposition \ref{prop:preceqiff} is to  say that  if $\G$ and $\hat G$ are geometric realizations of the abstract graph $G$, then $\G\preceq \hat G$  if and only if there exists an isomorphism of the line graphs that induces  a homomorphism of the edge crossing graphs. Thus if there is no injective homomorphism $\tilde f:EX(\G) \to EX(\hat G)$, then $\G\not\preceq \hat G$.   Thus given $\G$ and $\hat G$, if $EX(\G)$ is not isomorphic to a subgraph of $EX(\H)$, then there is no geometric homomorphism  $\G \to \hat G$.  However, Proposition \ref{prop:preceqiff} tells us something stronger.  The proposition tells us that $\G\to \hat G$ if and only if there is $\tilde f\in {\rm Aut}(L(G))$ so that $\tilde f (EX(\G))$ is a subgraph of $EX(\hat G)$.   For graphs whose line graphs have small automorphism groups, this reduces the work significantly.



\example  In Figure \ref{fig:P6} we see a realization $\overline P_6$ of $P_6$.  Note that $L(P_6) \cong P_5$ and $EX(\overline P_6) \cong P_2$. 
Each non-plane geometric realization of $P_6$ has edge crossing graph with at least one edge, so theoretically there are many possible realizations $\widehat P_6$ to which $\P_6$ might be geometrically homomorphic.  However, by Proposition \ref{prop:preceqiff} a vertex injective homomorphism $f:\overline P_6 \to \widehat P_6$, taking $EX(\overline P_6)$ to a subgraph of $EX(\widehat P_6)$, needs to be an automorphism of $L(P_6)$. 
Recall that $\P_6$ as given in Figure \ref{fig:P6} has a single crossing that occurs between edges $e_1$ (with endpoints $1$ and $2$) and $e_3$ (with endpoints $3$ and $4$).  Applying the two automorphisms of $L(P_6)$ 
 to this realization, we see that $\P_6 \to \hatP_6$ if and only if $\hatP_6$ has one of the crossings $e_1\times e_3$ or $e_3 \times e_5$.  This restricts our search significantly.

\begin{figure}[htbp]
 \centerline{\epsfig{file = 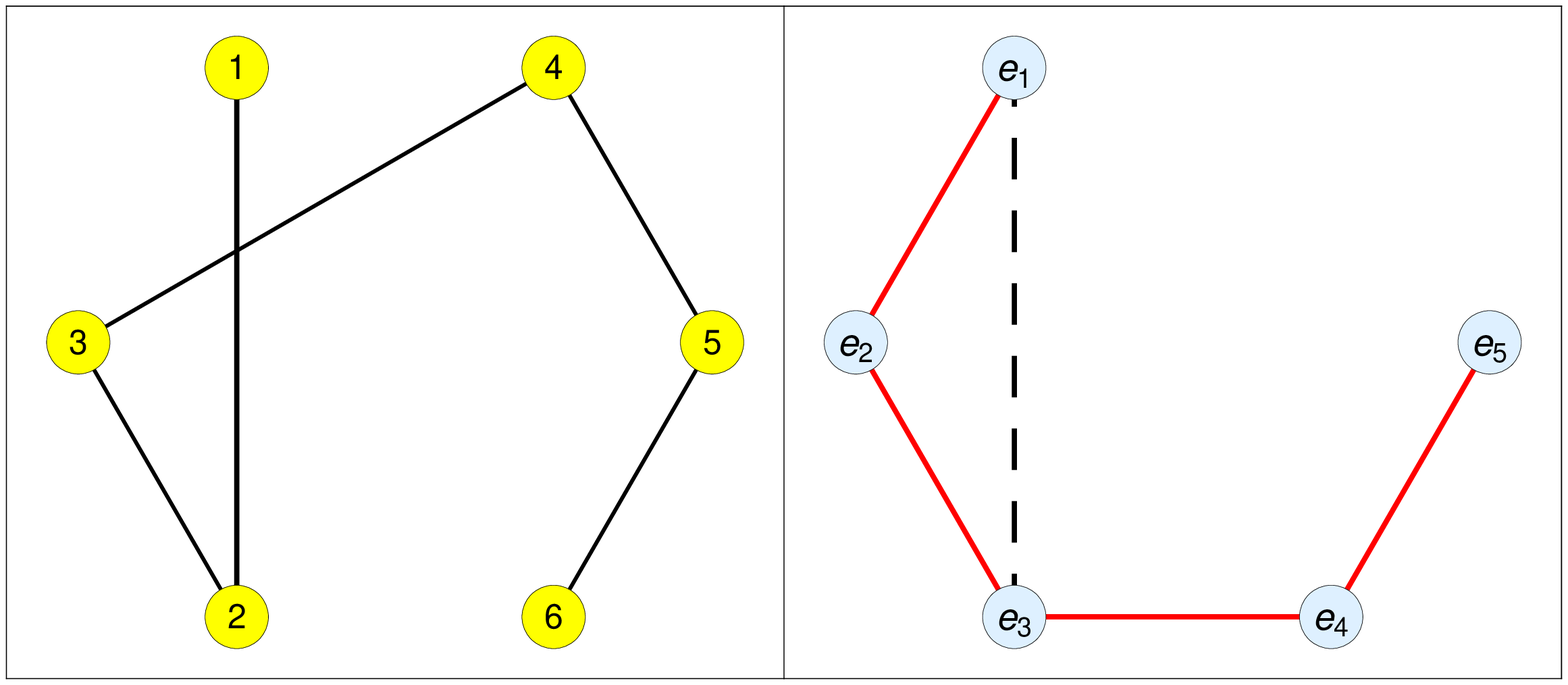,   width = .6\textwidth}} 
  \caption{$\P_6$ and its line\slash crossing graph}
  \label{fig:P6}\end{figure}

%
\subsection{Parameters}\label{subsec:tools}

We next  define parameters that help determine whether there is a geometric homomorphism between two geometric realizations of the same graph.  Proposition ~\ref{prop:observe} lists several properties that follow easily from  

the definition of these parameters.

\begin{figure}[htbp]
\centerline{\epsfig{file = 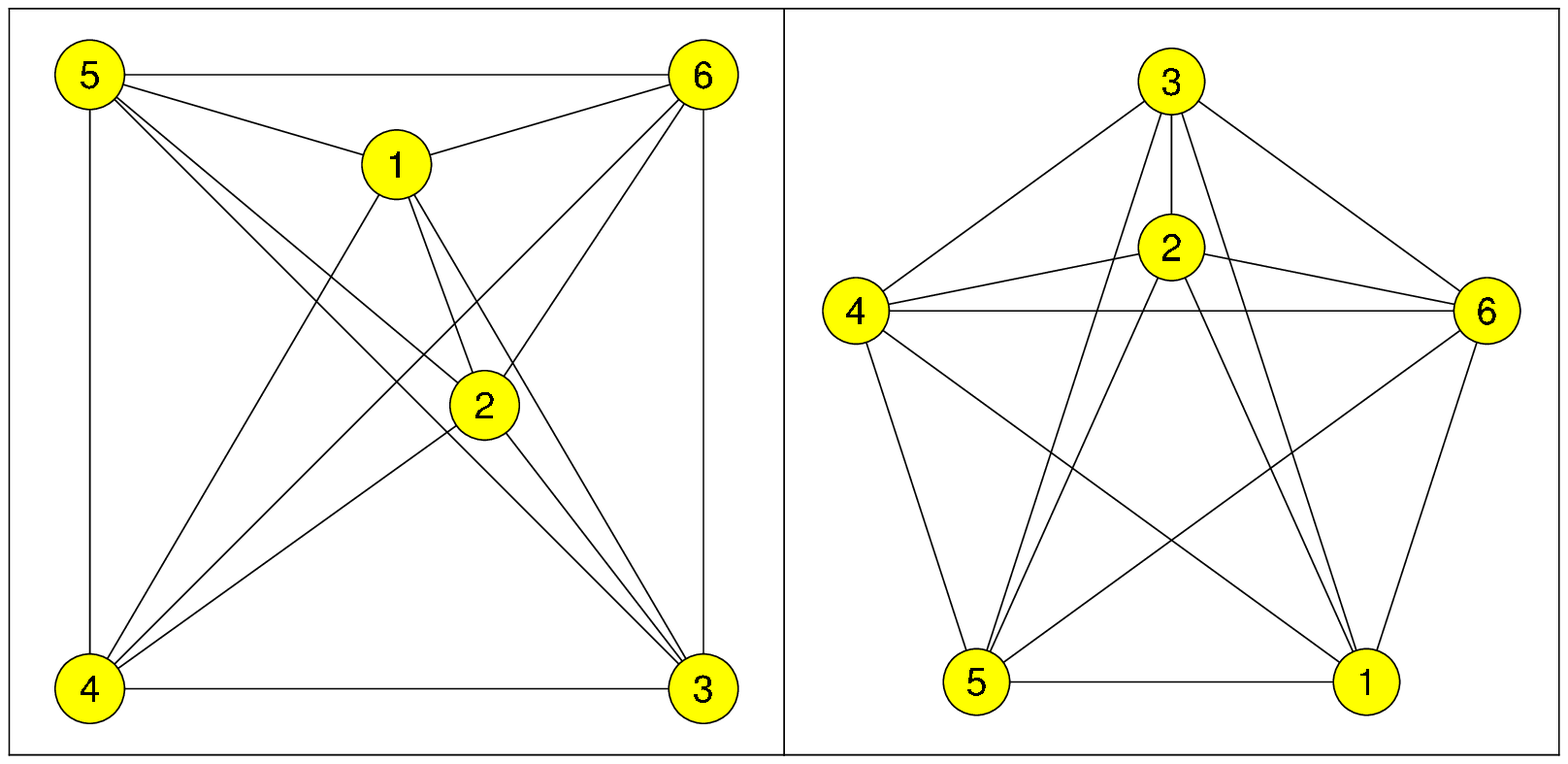,   width = .6\textwidth}} 
  \centering
$\overline K_6$\hspace{.3\textwidth}$\widehat K_6$
\caption{Vertex labels giving a geometric homomorphism from $\overline K_6$ to $\widehat K_6$}
  \label{fig:K6s}\end{figure}

Figure~\ref{fig:K6s} shows two geometric realizations, $\overline K_6$ and $\widehat K_6$,  of $K_6$.  The vertex labeling gives a geometric homomorphism from $\overline K_6$ to $\widehat K_6$. Below we define several parameters for geometric embeddings, which we then use to demonstrate that there is no geometric homomorphism from $\widehat K_6$ to $\overline K_6$.
In what follows we  let $e_{i,j}$  denote the edge from vertex $i$ to vertex $j$.

\definition If $\overline{G}$ is a geometric realization of a  graph $G$,  let $cr(\G)$ denote the total number of crossings in $\G$.   For $e\in E(\G)$ let  $cr(e)$ be the number of edges that cross the edge $e$ in $\G$. Let $E_0$  (resp., $E_\times$) denote the set of edges in $\overline{G}$ that have $cr(e) = 0$ \ (resp., $cr(e) > 0$).  If $|E_\times| = 0$ we say that $\overline{G}$ is a {\it plane realization} of $G$.  Let $G_0$ (resp., $G_\times $) denote the abstract graph that is the spanning subgraph of $G$ whose edge set is $E_0$ (resp., $E_\times$).  A clique in $\G$ is called a {\em convex clique} if its vertices are in convex position. The {\em convex clique number} of $\G$ is the maximum size of a convex clique in $\G$, denoted by $\widehat \omega(\G)$.

\begin{prop}\label{prop:observe}\rm
Let $\overline{G}$ and $ \widehat{G}$ be geometric realizations of a  graph $G$, and suppose $\overline{G}\stackrel{f} \preceq \widehat{G}$. Then each of the following conditions holds.
\begin{enumerate}
  \item\label{a}  $cr(\G) \leq cr(\hat G)$. 
  \item\label{b} For each $e\in E(\G)$, $cr(e) \leq cr(f(e))$.
  \item\label{c} $|E_0(\G)| \geq |E_0(\hat G)|$ and $|E_\times(\G)| \leq |E_\times(\hat G)|$.
  \item\label{d} $\hat G_0$ is a subgraph of $\G_0$.
  \item\label{e} $\widehat{\omega}(\G) \leq \widehat{\omega}(\widehat{G})$.
\end{enumerate}  
\end{prop}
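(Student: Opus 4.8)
The plan is to isolate one structural fact and let four of the five parts fall out as bookkeeping. Because $f$ is vertex-injective, preserves adjacency, and $\G,\hat G$ realize the same abstract graph $G$, the discussion following the definition of $\preceq$ shows that $f$ induces an automorphism of $G$; hence $e\mapsto f(e)$ is a \emph{bijection} of edge sets, and by the definition of geometric homomorphism it sends every crossing pair of $\G$ to a crossing pair of $\hat G$. From this, (a) follows by noting that distinct crossing pairs $\{e,e'\}$ of $\G$ have distinct images $\{f(e),f(e')\}$, each a crossing of $\hat G$, so $cr(\G)\le cr(\hat G)$; and (b) follows by the same injectivity applied to the edges crossing a fixed $e$, giving $cr(e)\le cr(f(e))$.

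Parts (c) and (d) I would read off from (b). Since $cr(e)>0$ forces $cr(f(e))>0$, we have $f(E_\times(\G))\subseteq E_\times(\hat G)$, whence $|E_\times(\G)|\le|E_\times(\hat G)|$; as $|E(\G)|=|E(\hat G)|=|E(G)|$, subtracting gives $|E_0(\G)|\ge|E_0(\hat G)|$. For (d), the contrapositive of (b) reads $cr(f(e))=0\Rightarrow cr(e)=0$, i.e. $f^{-1}(E_0(\hat G))\subseteq E_0(\G)$; since $f^{-1}$ is also an automorphism of $G$, it identifies $\hat G_0$ with a subgraph of $\G_0$, which is the claimed containment.

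The one genuinely geometric step is (e), where the point is to push a largest convex clique \emph{forward} along $f$, obtaining $\widehat\omega(\G)\le\widehat\omega(\hat G)$. Let $C$ be a convex clique of $\G$ with $|C|=\widehat\omega(\G)$; I claim $f(C)$ is a convex clique of $\hat G$ of the same size. It is a clique of size $|C|$ because $f$ is injective and adjacency-preserving. For convex position I would invoke the local characterization: four points in general position are in convex position if and only if some pair of the segments joining them crosses. Any four vertices of $C$ are in convex position, so (as $C$ is a clique) their two crossing diagonals are edges of $\G$; $f$ preserves this crossing and carries the diagonals to edges of $\hat G$, so the four image points admit a crossing pair of segments and are themselves in convex position. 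Thus every four-element subset of $f(C)$ is in convex position, and by Carath\'eodory's theorem in the plane no point of $f(C)$ can lie in the convex hull of the others (else it would lie in a triangle on three of them, contradicting convexity of that four-point subset). Hence $f(C)$ is in convex position and $\widehat\omega(\hat G)\ge|C|=\widehat\omega(\G)$.

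The hard part is exactly this last paragraph: certifying convex position by a property --- the existence of a crossing on four points --- that a geometric homomorphism transports, and then upgrading the resulting four-point convexity to global convexity via planar Carath\'eodory. It is essential to run the argument forward (the image of a clique in $\G$) rather than backward: since $f$ need not reflect crossings, pulling a convex clique back from $\hat G$ need not yield a convex clique in $\G$, so only the forward direction produces the stated inequality.
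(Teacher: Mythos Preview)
Your proof is correct. The paper itself does not give a proof of this proposition: it merely says that the listed properties ``follow easily from the definition of these parameters,'' so there is no paper proof to compare against in detail.

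Your treatment of parts (a)--(d) is exactly the routine bookkeeping the paper has in mind, using that $f$ is a bijection on edges carrying crossing pairs to crossing pairs. For part (e) you supply a genuine argument---characterizing convex position of four points via the existence of a crossing diagonal pair, pushing that through $f$, and then invoking planar Carath\'eodory to pass from four-point convexity to global convexity---which is more explicit than anything the paper writes down. Your closing remark, that the argument must run forward because $f$ need not reflect crossings, is a useful observation the paper omits.
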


\example In the graphs in Figure \ref{fig:K6s}, $cr(\overline K_6) = 8$, $cr(\widehat K_6)=11$,  $|E_0(\overline K_6)| = 7$,  $|E_0(\widehat K_6)| = 6$, $\widehat{\omega}(\overline K_6) = 4$ and $\widehat{\omega}(\widehat K_6)=5$. Thus parts \ref{a}, \ref{c},  and \ref{e} of Proposition~\ref{prop:observe} each imply that there is no geometric homomorphism from $\widehat K_6$ to $\overline K_6$.

\definition \rm Let $\G$ be a geometric realization of a graph $G$. For each $v\in V(\G)$, let $d_0(v)$ be the number of uncrossed edges incident to $v$ and  let $m(v)$ be the maximum number of times an edge that is incident to $v$ is crossed.

\begin{prop}\label{prop:obs2} \rm
If $\G \stackrel{f}{\preceq} \hat G$ is a geometric homomorphism,  then  for each $v\in V(\G)$, $d_0(v) \geq d_0(f(v))$ and $m(v) \leq m(f(v))$.
\end{prop}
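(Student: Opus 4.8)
The plan is to derive both inequalities from Proposition~\ref{prop:observe}(\ref{b}) together with the single structural fact that a vertex-injective geometric homomorphism between two realizations of the same abstract graph induces an automorphism of that graph. So the first thing I would record is: since $f$ is injective and preserves adjacencies, and both $\G$ and $\hat G$ realize the same graph $G$, the underlying vertex bijection of $f$ is an automorphism of $G$. Consequently $e\mapsto f(e)$ is a bijection $E(\G)\to E(\hat G)$, and for every $v\in V(\G)$ it restricts to a bijection between the edges of $\G$ incident to $v$ and the edges of $\hat G$ incident to $f(v)$; in particular $v$ and $f(v)$ have the same degree. In addition, Proposition~\ref{prop:observe}(\ref{b}) gives $cr(e)\le cr(f(e))$ for every edge $e$ of $\G$.

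For the inequality $m(v)\le m(f(v))$ I would simply pick an edge $e$ incident to $v$ in $\G$ achieving $cr(e)=m(v)$. Then $f(e)$ is an edge incident to $f(v)$ with $cr(f(e))\ge cr(e)=m(v)$ by Proposition~\ref{prop:observe}(\ref{b}), and since $m(f(v))$ is the maximum of $cr(\cdot)$ over edges at $f(v)$, this yields $m(f(v))\ge m(v)$.

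For the inequality $d_0(v)\ge d_0(f(v))$ I would read Proposition~\ref{prop:observe}(\ref{b}) contrapositively: if an edge $e'$ incident to $f(v)$ is uncrossed in $\hat G$, then its preimage $f^{-1}(e')$, which is incident to $v$, satisfies $cr(f^{-1}(e'))\le cr(e')=0$ and hence is uncrossed in $\G$. Thus the restricted bijection above carries the set of uncrossed edges of $\hat G$ at $f(v)$ (which has size $d_0(f(v))$) injectively into the set of uncrossed edges of $\G$ at $v$, so $d_0(v)\ge d_0(f(v))$.

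I do not anticipate a genuine obstacle, since everything follows from part~(\ref{b}) of the preceding proposition and elementary counting; the only point that demands a little care is to use the \emph{bijectivity}, not merely the existence, of the induced edge map at each vertex, so that the pullback count in the $d_0$ argument produces distinct edges and the inequality emerges in the stated direction.
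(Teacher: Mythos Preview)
Your argument is correct. The paper does not actually supply a proof of this proposition; it states it without justification, treating it (like Proposition~\ref{prop:observe}) as an easy consequence of the definitions. Your write-up fills in precisely the expected reasoning: using that $f$ induces a bijection between edges at $v$ and edges at $f(v)$, and then applying Proposition~\ref{prop:observe}(\ref{b}) directly for $m$ and contrapositively for $d_0$.
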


\example In the example in Figure \ref{fig:K6s},  consider the vertex $v = 1$ in $\overline K_6$ and its image $f(v)$ in 
$\widehat K_6$. Then $d_0(v)=2$ and $m(v) = 2$,  while $d_0(f(v))=2$ and $m(f(v))=3$.

An effective way to use Proposition~\ref{prop:obs2} is to compare the values of each parameter over all vertices at once.  This motivates the following definitions.

\definition For a  geometric graph $\G$,  let    $D_0(\G)$ (resp., $M(\G)$) be the vector whose coordinates contain the values 
$\{d_0(v)\}_{v\in V(\G)}$ (resp., $\{m(v)\}_{v\in V(\G)}$),  listed in non-increasing order.


\definition\rm Given two vectors $\vec x$ and $\vec y$ in $\Z^n$, we say that $\vec x\leq \vec y$ if each coordinate of $\vec x$ has value that is at most the value in the corresponding coordinate of $\vec y$.  Let $X, Y$ be the vector of values of $\vec x$ and $\vec y$ listed in non-increasing order. 

\begin{lem}\label{lem:downarrow} \rm Let $\vec x, \vec y \in \mathbb{Z}^n$. If $\vec x \leq \vec y$, then  $X \leq Y$. \end{lem}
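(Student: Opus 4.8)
The plan is to prove the contrapositive-friendly statement directly by sorting. Suppose $\vec x \le \vec y$ coordinatewise, and let $X$ and $Y$ be the rearrangements of $\vec x$ and $\vec y$ into non-increasing order; write $X = (X_1, \ldots, X_n)$ with $X_1 \ge X_2 \ge \cdots \ge X_n$, and similarly for $Y$. I must show $X_k \le Y_k$ for each $k$. The natural approach is to fix an index $k$ and give a combinatorial argument: $X_k$ is characterized as the largest value $t$ such that at least $k$ coordinates of $\vec x$ are $\ge t$; equivalently, $X_k$ is the maximum, over all $k$-element subsets $S$ of $\{1,\ldots,n\}$, of $\min_{i \in S} x_i$. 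This ``min over a $k$-set, maximized over $k$-sets'' description of the $k$-th largest entry is the key structural fact.

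Granting that characterization, the argument is short. Choose a $k$-element set $S$ achieving $X_k = \min_{i \in S} x_i$. Since $x_i \le y_i$ for every $i$, we get $\min_{i \in S} x_i \le \min_{i \in S} y_i$. But $\min_{i \in S} y_i$ is the minimum of $y$ over some particular $k$-set, hence at most the maximum of such minima, which is $Y_k$. Chaining these, $X_k = \min_{i\in S} x_i \le \min_{i \in S} y_i \le Y_k$. Since $k$ was arbitrary, $X \le Y$.

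The main obstacle — really the only thing needing care — is justifying the order-statistic identity $X_k = \max_{|S| = k} \min_{i \in S} x_i$. One direction is immediate: taking $S$ to be the indices of the $k$ largest entries gives $\min_{i \in S} x_i = X_k$, so the max is $\ge X_k$. For the reverse, if $|S| = k$ then $S$ cannot be contained in the set of indices carrying the (at most $k-1$) strictly-larger-than-$X_k$ values together with fewer than... more carefully: at most $k-1$ positions hold a value strictly exceeding $X_k$ is false in general, so instead argue that any $k$-set $S$ must contain an index $i$ with $x_i \le X_k$ (since only $k-1$ indices can be ``reserved'' for the top $k-1$ slots in a way that... ) — cleanest is: sort $\vec x$; a $k$-set $S$ cannot avoid position-value $X_k$ from below, formally $\min_{i\in S} x_i \le X_k$ because $S$ has $k$ elements while $\{i : x_i > X_k\}$ has size at most $k-1$ after accounting for ties. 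I would phrase this via: list a bijection $\sigma$ with $x_{\sigma(1)} \ge \cdots \ge x_{\sigma(n)}$; any $k$-subset $S$ meets $\{\sigma(k), \sigma(k+1), \ldots, \sigma(n)\}$ (a set of size $n-k+1$) by pigeonhole since its complement $\{\sigma(1),\ldots,\sigma(k-1)\}$ has only $k-1$ elements, and every element there has $x$-value $\le x_{\sigma(k)} = X_k$. That settles $\min_{i \in S} x_i \le X_k$ for all such $S$, completing the identity. Alternatively, one can bypass the identity entirely with a direct pigeonhole: the set $A = \{i : y_i \ge X_k\}$ contains $\{i : x_i \ge X_k\}$, which has at least $k$ elements (those are the positions of the top $k$ sorted $x$-values, possibly more with ties), so $|A| \ge k$, which forces $Y_k \ge X_k$ by definition of the $k$-th largest entry of $\vec y$. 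I expect to use this second, more streamlined version in the final write-up.
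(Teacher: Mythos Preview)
Your proposal is correct, and the ``streamlined'' pigeonhole version you settle on at the end is essentially identical to the paper's proof: pick $k$ indices where $x_i \ge X_k$, use $y_i \ge x_i$ at those indices to get at least $k$ coordinates of $\vec y$ that are $\ge X_k$, and conclude $Y_k \ge X_k$. Your first route via the order-statistic identity $X_k = \max_{|S|=k}\min_{i\in S} x_i$ is a valid alternative packaging of the same idea, though the detour through justifying that identity is unnecessary once you have the direct argument.
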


\begin{proof} 
By definition, the first coordinate of $X$ is $X_1=\max \big\{ x_i \mid 1 \leq i \leq n\big \}$,
and so $X_1  = x_{i_1}$ for some $i_1 \in \{1, \dots ,   n\}$.   But by assumption, $X_1 = x_{i_1} \leq y_{i_1} \leq \text{max} \big\{ y_i \mid 1 \leq i \leq n\big \} = Y_1.$

More generally, for all $2 \leq k \leq n$, the $k$-th entry of $X$ is less than or equal to at least $k$ entries of $\vec x$, say $x_{i_1},\ldots, x_{i_k}$.  Further by assumption, for each $h$,  $x_{i_h}\leq y_{i_h}$.  Since there are (at least) $k$ coordinates of $\vec y$ that have value at least $X_k$, the value $Y_k$ must be at least that large. Thus the $k$-th entry of $X$ is less than or equal to at least $k$ coordinates of $\vec Y$, and so  $X_k \leq Y_k$.  Thus $X \leq Y$.\end{proof}

\begin{cor}\rm \label{cor:vector} \rm If $\G \stackrel{f}{\preceq} \hat G$,  then:
\begin{enumerate}
\item\label{cor1} $D_0(\G) \geq D_0(\hat G)$; 
\item\label{cor2} $M(\G)\leq M(\hat G)$. 
\end{enumerate}
\end{cor}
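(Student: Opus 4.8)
The two statements are parallel, so I would prove them by the same template, feeding Proposition~\ref{prop:obs2} into Lemma~\ref{lem:downarrow}. The key preliminary observation is that an injective geometric homomorphism $f:\G\to\hat G$ between realizations of the \emph{same} abstract graph $G$ is in fact a bijection on vertex sets: since $f$ is injective, preserves adjacency, and $|V(\G)|=|V(\hat G)|$, it induces an automorphism of $G$. Hence re-indexing a vector by $f$ permutes its coordinates, so the multiset of values $\{d_0(f(v))\}_{v\in V(\G)}$ equals $\{d_0(w)\}_{w\in V(\hat G)}$, and likewise for $m$.

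For part~\ref{cor1}, I would set $n=|V(\G)|$ and define two integer vectors indexed by $V(\G)$: let $\vec y$ have $y_v=d_0(v)$ and let $\vec x$ have $x_v=d_0(f(v))$. Proposition~\ref{prop:obs2} gives $x_v=d_0(f(v))\le d_0(v)=y_v$ for every $v$, i.e. $\vec x\le\vec y$. By Lemma~\ref{lem:downarrow}, the non-increasing rearrangements satisfy $X\le Y$. Now $Y=D_0(\G)$ by definition, while $X$ is the non-increasing rearrangement of $\{d_0(f(v))\}_{v\in V(\G)}=\{d_0(w)\}_{w\in V(\hat G)}$ (using that $f$ is a bijection), which is exactly $D_0(\hat G)$. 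Thus $D_0(\hat G)\le D_0(\G)$.

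For part~\ref{cor2}, I would run the same argument with the inequality flipped in the right place: let $\vec x$ have $x_v=m(v)$ and $\vec y$ have $y_v=m(f(v))$. Proposition~\ref{prop:obs2} gives $m(v)\le m(f(v))$, i.e. $\vec x\le\vec y$, so Lemma~\ref{lem:downarrow} yields $X\le Y$. Here $X=M(\G)$ and $Y$ is the non-increasing rearrangement of $\{m(f(v))\}_{v\in V(\G)}=M(\hat G)$, so $M(\G)\le M(\hat G)$.

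I do not expect a genuine obstacle here; the only point that needs a sentence of care is the bijectivity of $f$ (without it, the rearrangement of $\{d_0(f(v))\}$ would only be a \emph{subset} of the coordinates of $D_0(\hat G)$, and the conclusion could fail). Everything else is a bookkeeping application of the lemma, so I would keep the write-up to just the two short paragraphs above.
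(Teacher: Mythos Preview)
Your proposal is correct and follows exactly the approach the paper intends: the corollary is left unproved in the paper precisely because it is the immediate combination of Proposition~\ref{prop:obs2} (pointwise inequalities under $f$) with Lemma~\ref{lem:downarrow} (sorting preserves coordinatewise order), using that $f$ is a bijection on $V(G)$. Your observation that bijectivity of $f$ is needed so that $\{d_0(f(v))\}$ and $\{m(f(v))\}$ really are permutations of the $\hat G$-values is the one nontrivial point, and you handle it correctly.
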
 

In  Figure \ref{fig:K6s},   $D_0(\overline K_6) = (3, 3, 3, 2, 2, 1) \geq D_0(\widehat K_6)=(3, 2, 2, 2, 2, 1)$, while $M(\overline K_6)=(4, 4, 3, 3, 2, 2) \leq M(\widehat K_6)=(4, 4, 3, 3, 3, 2)$.


\section{Posets for Geometric Paths}\label{sect:paths}

We now determine ${\mathcal P}_n$, the geometric homomorphism poset of the  path ${P}_n$ on $n$ vertices, for $n = 2, \ldots, 6$, and we state some properties of this poset for general $n$. Throughout this section, we denote the vertices of $P_n$ by $1, 2, \ldots, n$, and its edges by $e_i = \{i, i+1\}, i = 1, \ldots, n-1$.  
The following two lemmas are helpful in determining the geometric realizations of  $P_n$.

\begin{lem}\label{lem:p5crossingrule} \rm
If a geometric graph $\overline{G}$ contains $P_5$ as a subgraph, with vertices and edges numbered as above and $e_1 \times e_3$ and $e_2 \times e_4$ are both crossings in $\overline{G}$, then so is $e_1 \times e_4$. 
\end{lem}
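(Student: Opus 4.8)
The plan is to work purely with the five vertices $1,\dots,5$ and the two given crossings, and to reduce the statement to an elementary incidence fact about cevians of a triangle.

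First I would name the two crossing points: let $P$ be the point where $e_1=\{1,2\}$ crosses $e_3=\{3,4\}$, and let $Q$ be the point where $e_2=\{2,3\}$ crosses $e_4=\{4,5\}$. By the definition of a crossing, $P$ lies in the relative interior of both segments $12$ and $34$, and $Q$ lies in the relative interior of both $23$ and $45$. Since the five vertices are in general position, $2,3,4$ are not collinear, so $\triangle 234$ is a nondegenerate triangle; moreover $P$ is an interior point of its side $34$ and $Q$ is an interior point of its side $23$ (in particular $P\notin\{2,3,4\}$ and $Q\notin\{2,3,4\}$).

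Next I would observe that, inside $\triangle 234$, the segment $[2,P]$ is a cevian from the vertex $2$ to the opposite side, and $[4,Q]$ is a cevian from the vertex $4$ to the opposite side. The key step is the classical fact that two cevians issued from distinct vertices of a triangle meet in a single point $Z$ interior to the triangle, and I would prove it concretely in this setting: the line through $2$ and $P$ separates $3$ from $4$ (because $P$ lies strictly between them on side $34$), while $Q$ lies strictly on the same side of this line as $3$ (the segment $23$ meets the line $2P$ only at $2$, and $Q\neq 2$). Hence $Q$ and $4$ lie on opposite sides of line $2P$, so the segment $4Q$ crosses line $2P$ at a point $Z$ strictly between $Q$ and $4$. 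Because the open segment $(Q,4)$ lies in the interior of $\triangle 234$ and line $2P$ meets the triangle exactly in the chord $[2,P]$, the point $Z$ in fact lies in the open segment $(2,P)$; in particular $Z$ differs from each of $2,P,Q,4$.

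Finally I would translate this back to $e_1$ and $e_4$. Since $P$ is strictly between $1$ and $2$ on $e_1$, the open segment $(2,P)$ lies in the relative interior of $e_1$; since $Q$ is strictly between $4$ and $5$ on $e_4$, the open segment $(Q,4)$ lies in the relative interior of $e_4$. Thus $Z$ is a common point in the relative interior of both $e_1$ and $e_4$. The segments $e_1$ and $e_4$ are not collinear, for otherwise $1,2,4$ would be three collinear vertices, contradicting general position; two non-collinear straight segments that meet at an interior point of each must cross there, so $e_1\times e_4$ is a crossing of $\overline G$, as claimed. The part that I expect to need the most care is the cevian-separation argument of the third paragraph, together with the routine non-degeneracy bookkeeping (checking that none of the lines $2P$, $23$, $34$, $12$, $45$ coincide), all of which follows from the fact that a crossing point is interior to each of its two edges together with the general-position hypothesis.
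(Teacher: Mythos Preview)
Your proof is correct and takes a genuinely different route from the paper's. The paper fixes the line $\ell$ through $e_1$, places vertex~3 above and vertex~4 below it, and then argues with two cones anchored at vertex~4: one with sides through $1$ and $2$ (which must contain~$3$ and all of~$e_2$ for $e_3$ to cross $e_1$), and one with sides through $2$ and $3$ (which must contain~$5$ for $e_4$ to cross $e_2$); tracking where~$5$ can lie then forces $e_4$ across $e_1$. You instead work entirely inside the triangle $\triangle 234$, observing that the crossing points $P\in e_3$ and $Q\in e_2$ make $[2,P]$ and $[4,Q]$ a pair of cevians whose common interior point $Z$ automatically lies on both $e_1$ (since $P$ is between $1$ and $2$) and $e_4$ (since $Q$ is between $4$ and $5$). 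Your argument is more symmetric and constructive---it exhibits the crossing point explicitly---while the paper's cone/region framework, though less tidy here, is the template reused in the later $P_6$ and $C_6$ lemmas, where placing successive vertices into labelled regions is the natural bookkeeping device.
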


\begin{proof}
Suppose $\overline{G}$ 
has both of the crossings $e_1 \times e_3$ and $e_2 \times e_4$. 
 Let $\ell$ be the line determined by edge $e_1$.
Since $e_1$ crosses $e_3$, we may  assume that vertex 3
lies above $\ell$ and vertex 4 lies below $\ell$, as indicated in Figure~\ref{fig:lemma3}. Let $C$ be the cone with vertex 4 and sides extending through vertices 1 and 2 (indicated by dashed lines in Figure~\ref{fig:lemma3}). 
For $e_3$ to cross $e_1$, both 3 and 
 $e_2$
 must be inside $C$. For $e_4$ to cross $e_2$, both 5 and 
 $e_4$
  must lie in the cone with vertex 4 and sides through 2 and 3, and 5 must also lie above $e_2$. This forces $e_4$ to cross $e_1$ in addition to crossing $e_2$.
\end{proof}

\begin{figure}[htbp]
 \centerline{\epsfig{file = 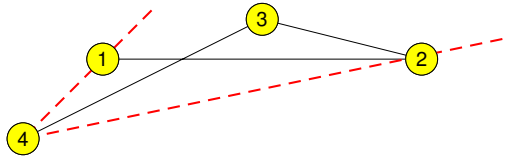,   width = .5\textwidth}} 
  \caption{For use in the proof of Lemma \ref{lem:p5crossingrule}}
  \label{fig:lemma3}
\end{figure}

 \begin{lem}\label{lem:pn_max} \rm A geometric realization of $P_n$ has at most ${(n-2)(n-3)}/{2}$ edge crossings.  Moreover, this bound is tight.\end{lem}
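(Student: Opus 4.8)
The plan is to prove the upper bound by a counting argument on pairs of edges, and then to establish tightness by exhibiting an explicit family of realizations achieving the bound. For the upper bound, first I would observe that in $P_n$ the edges are $e_1, \ldots, e_{n-1}$, and that two edges $e_i$ and $e_j$ can cross only if they are non-adjacent, i.e. $|i-j| \geq 2$. The number of such unordered non-adjacent pairs is $\binom{n-1}{2} - (n-2) = \frac{(n-2)(n-3)}{2}$, since there are $n-2$ adjacent pairs among the $n-1$ edges. Every crossing involves a distinct such pair, so $cr(\G) \leq \frac{(n-2)(n-3)}{2}$.

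For tightness I would construct a realization in which every non-adjacent pair of edges actually crosses. The natural candidate is a ``zig-zag'' or ``staircase'' drawing: place the vertices alternately, e.g. the odd-indexed vertices far to the left along one nearly-vertical line and the even-indexed vertices far to the right along another, ordered so that consecutive edges spiral around. A cleaner route is to argue by induction on $n$: given a realization of $P_{n-1}$ in which all $\binom{n-3}{2}$ non-adjacent edge pairs cross, I would show how to attach a new vertex $n$ (and edge $e_{n-1}$) in a position from which $e_{n-1}$ crosses all of $e_1, \ldots, e_{n-3}$ (every edge except its neighbor $e_{n-2}$), thereby adding exactly $n-3$ new crossings and preserving the old ones; then $\frac{(n-3)(n-4)}{2} + (n-3) = \frac{(n-2)(n-3)}{2}$, closing the induction with base case $n=3$ (a path $P_3$, no crossings, bound $0$) or $n=4$ (one crossing possible, bound $1$). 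The key geometric fact needed is that, given a fixed drawing of $e_1, \ldots, e_{n-2}$, there is room to place vertex $n$ so that the segment from vertex $n-1$ to vertex $n$ crosses each of $e_1, \ldots, e_{n-3}$: one takes vertex $n$ far away in the direction roughly opposite to the bulk of the existing drawing, on the far side of the lines extending $e_1,\dots,e_{n-3}$, while keeping it out of the line extending $e_{n-2}$; general position can be arranged by a small perturbation.

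The main obstacle I expect is the geometric realizability in the tightness argument — verifying that such an extension position always exists without accidentally forcing vertex $n$, vertex $n-1$, and some earlier vertex to be collinear, and without the new edge being forced to avoid some $e_i$. This is where Lemma~\ref{lem:p5crossingrule} is relevant as a sanity check: it shows that certain crossing patterns in fact \emph{force} additional crossings, which is consistent with (indeed supportive of) the claim that the all-pairs-crossing configuration is attainable, but it also warns that one cannot prescribe crossings arbitrarily. I would therefore favor giving an explicit coordinatization of the extremal drawing rather than a purely existential inductive step: for instance, placing vertex $i$ at a point like $\bigl((-1)^i, i\bigr)$ perturbed slightly, or using a known ``thrackle-like'' path drawing, and then directly checking that $e_i$ and $e_j$ cross whenever $|i-j|\geq 2$. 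Confirming that single explicit configuration, together with the clean counting bound above, completes the proof.
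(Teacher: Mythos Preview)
Your upper-bound argument is identical to the paper's: both simply count the non-adjacent edge pairs, $\binom{n-1}{2}-(n-2)=\tfrac{(n-2)(n-3)}{2}$. For tightness the paper actually does \emph{less} than you propose --- it just displays a maximum-crossing drawing of $P_7$ (Figure~\ref{fig:p7_maxcrossings}) and leaves the general construction implicit.

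One concrete suggestion in your plan would fail, however. The coordinates $\bigl((-1)^i,\,i\bigr)$ give an ordinary zigzag in which edge $e_k$ lies entirely in the horizontal strip $k\le y\le k+1$; edges in disjoint strips (for instance $e_1$ and $e_3$) can never meet, so this drawing has \emph{zero} crossings rather than the maximum. What is needed is a drawing in which consecutive path vertices land far apart in convex position --- e.g.\ place $1,\dots,n$ on a circle in the cyclic order ``odds increasing, then evens increasing'' and walk the path $1,2,\dots,n$; then every path edge is a long chord and any two non-adjacent path edges are alternating, hence crossing, chords. Your inductive alternative is also reasonable, and the paper implicitly runs its reverse later in the proof of Theorem~\ref{thm:pn_props}(\ref{a3}), sliding the terminal vertex back along its edge to delete one crossing at a time.
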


\begin{proof} The only possible crossing edge pairs in any geometric realization of $P_n$ are of the form $e_i \times e_j$ with $j-i\ge 2$; thus for any $i = 1, \ldots, n-3$,  there are $n-i-2$ higher-numbered edges that can cross $e_i$. A straightforward algebraic calculation shows that ${(n-2)(n-3)}/{2}$ is an upper bound on the number of edge crossings. Figure~\ref{fig:p7_maxcrossings} gives a 
geometric realization of $P_7$ that achieves this bound.
\end{proof}

\begin{figure}[htbp]
 \centerline{\epsfig{file = 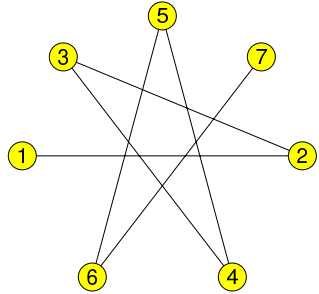,   width = .3\textwidth}} 
\caption{$\overline{P}_7$ with the maximum number of crossings}  \label{fig:p7_maxcrossings}
\end{figure}

To determine up to isomorphism all the possible geometric realizations of  $P_n$, $n = 2, \ldots, 5$,  first list all  sets whose elements are pairs of edges $e_i \times e_j$ with $j-i\ge 2$. 
Next  eliminate any sets that violate Lemma~\ref{lem:p5crossingrule}, and  identify any sets that are equivalent under an automorphism of $P_n$.  
Recall that the only two automorphisms of $P_n$ are the identity and the map that reverses the order of the vertices.
Finally, check that each of the remaining sets corresponds to a geometric realization.

To determine the structure of the geometric homomorphism poset, recall that by Proposition \ref{prop:preceqiff}, $\overline{P}_n \preceq \hatP_n$ if and only if there is  $\tilde f\in {\rm Aut}(L(P_n))$  that induces a graph homomorphism from  $EX(\overline{P}_n)$ to $EX(\hatP_n)$. The graph $L(P_n) = P_{n-1}$ has only the two automorphisms mentioned above.  Thus for each ordered pair of realizations, we need only check two automorphisms to see if they extend to color-preserving homomorphisms of the corresponding line\slash crossing graph.

For $n = 2, \ldots, 5$, all sets that satisfy Lemma~\ref{lem:p5crossingrule} have geometric realizations.
We state the poset results for $P_2, \ldots, P_5$ below.

\begin{thm} \label{thm:p1top5posets} \rm Let ${\mathcal P_n}$ be the poset of geometric realizations of $P_n$.

\begin{enumerate}
  \item Each of ${\mathcal P}_2$ and ${\mathcal P}_3$  is trivial, containing only the plane realization.
  \item ${\mathcal P}_4$ is a chain of two elements, in which the plane realization is the unique minimal element, and the realization with crossing $e_1\times e_3$ is the unique maximal element.
  \item ${\mathcal P}_5$ has the following five non-isomorphic geometric realizations and Hasse diagram as given in Figure \ref{fig:P5_poset}: $0.1 = \emptyset$ (the plane realization) ;  two 1-crossing realizations, $1.1 = \{e_1\times e_3\} \equiv \{e_2\times e_4\}$ and $1.2 = \{e_1\times e_4\}$;  a single 2-crossing realization, $2.1 = \{e_1\times e_3, e_1\times e_4\} \equiv \{e_1\times e_4, e_2\times e_4\}$; and a single 3-crossing realization, $3.1 = \{e_1\times e_3, e_1\times e_4, e_2\times e_4\}$. 
\end{enumerate}  
\end{thm}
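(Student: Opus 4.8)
The plan is to treat the three parts of the theorem by the uniform recipe already outlined in the text: enumerate candidate crossing-sets, prune with Lemma~\ref{lem:p5crossingrule} and the $P_n$-automorphism group, verify realizability, and then use Proposition~\ref{prop:preceqiff} to read off the covering relations. For $P_2$ and $P_3$ there are no edge pairs $e_i\times e_j$ with $j-i\ge 2$ at all (indeed $P_3$ has only two edges, and they are adjacent), so the only crossing-set is $\emptyset$; part (1) is immediate. For $P_4$ the only admissible pair is $e_1\times e_3$, giving exactly two crossing-sets, $\emptyset$ and $\{e_1\times e_3\}$; one checks each is realizable (a convex drawing and a single-crossing drawing, respectively), and since $EX$ of the plane realization is the empty graph on $3$ vertices, it maps into anything, while the converse fails by Proposition~\ref{prop:observe}\ref{a}. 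Hence $\calP_4$ is the $2$-chain claimed, with the stated extremal labels.

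For $\calP_5$ the work is genuinely finite. The candidate pairs are $e_1\times e_3$, $e_1\times e_4$, $e_2\times e_4$, so there are $2^3=8$ subsets. I would first discard the sets ruled out by Lemma~\ref{lem:p5crossingrule}: any set containing both $e_1\times e_3$ and $e_2\times e_4$ must also contain $e_1\times e_4$, so $\{e_1\times e_3,e_2\times e_4\}$ is illegal, leaving $7$ subsets. The nontrivial automorphism of $P_5$ reverses vertices and hence swaps $e_1\leftrightarrow e_4$, $e_2\leftrightarrow e_3$, so it sends $e_1\times e_3\leftrightarrow e_2\times e_4$ and fixes $e_1\times e_4$; this identifies $\{e_1\times e_3\}\equiv\{e_2\times e_4\}$ and $\{e_1\times e_3,e_1\times e_4\}\equiv\{e_1\times e_4,e_2\times e_4\}$, collapsing the $7$ to the $5$ isomorphism classes $0.1,1.1,1.2,2.1,3.1$ listed in the statement. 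Then I would exhibit an explicit straight-line drawing of $P_5$ realizing each of the five classes — this is the ``check that each remaining set corresponds to a geometric realization'' step — which for $n=5$ is routine and can be packaged in a single figure.

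Finally, for the Hasse diagram I compute $EX(\overline P_5)$ for each realization (a subgraph of $L(P_5)=P_4$ on the vertex set $\{e_1,e_2,e_3,e_4\}$ with the only possible edges being $e_1e_3$, $e_1e_4$, $e_2e_4$) and apply Proposition~\ref{prop:preceqiff}: $\overline P_5\preceq\widehat P_5$ iff one of the two automorphisms of $P_4$ carries $EX(\overline P_5)$ into $EX(\widehat P_5)$ as a subgraph. Concretely: $EX(0.1)=\overline K_4$, $EX(1.1)$ is a single edge $e_1e_3$ (equivalently $e_2e_4$), $EX(1.2)$ is the single edge $e_1e_4$, $EX(2.1)$ is the path $e_3-e_1-e_4$ (equivalently $e_1-e_4-e_2$), and $EX(3.1)$ is the path $e_3-e_1-e_4-e_2$. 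Since the reversal automorphism fixes the edge $e_1e_4$ but moves $e_1e_3$, one checks $1.1\preceq 2.1$ and $1.2\preceq 2.1$ while $1.1\not\preceq 1.2$ and $1.2\not\preceq 1.1$, that $2.1\preceq 3.1$, and that $1.1$ and $1.2$ are the only things above $0.1$ at level one; Proposition~\ref{prop:observe}\ref{a} forbids any upward edge that decreases $cr$. This yields exactly the covering relations $0.1\prec 1.1\prec 2.1\prec 3.1$ and $0.1\prec 1.2\prec 2.1$, i.e.\ the diamond-with-a-tail of Figure~\ref{fig:P5_poset}. The only mildly delicate point — the ``main obstacle'' — is verifying that $1.1\not\preceq 1.2$: one must confirm that \emph{neither} automorphism of $P_4$ sends the edge $e_1e_3$ to the edge $e_1e_4$, which is true because the reversal sends $e_1e_3$ to $e_2e_4\ne e_1e_4$; equivalently, invoke Proposition~\ref{prop:obs2} or a direct crossing-incidence argument, since in $1.2$ the crossed edges $e_1,e_4$ are non-adjacent while in $1.1$ the crossed edges $e_1,e_3$ are also non-adjacent but meet a common edge $e_2$ differently — the cleanest route is simply the automorphism count, which is what Proposition~\ref{prop:preceqiff} reduces everything to.
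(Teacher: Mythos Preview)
Your proposal is correct and follows essentially the same approach as the paper: enumerate candidate crossing-sets, prune via Lemma~\ref{lem:p5crossingrule}, identify classes under the two automorphisms of $P_n$, exhibit realizations, and read off the order via Proposition~\ref{prop:preceqiff}. You supply more explicit detail on the Hasse diagram verification (in particular the incomparability of $1.1$ and $1.2$) than the paper does in the body, but the method is the same; the paper simply defers the realizations and line/crossing graphs to a table in the appendix and notes that the identity map already witnesses every comparability needed.
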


\begin{proof}
It is straightforward to find geometric realizations with the given sets of crossings. These realizations, together with their line/crossing graphs (which aid in determining the poset relations), appear in Appendix~\ref{app:paths}. It follows from Lemmas~\ref{lem:p5crossingrule} and \ref{lem:pn_max} that there are no other realizations. 
\end{proof}

\begin{figure}[htbp]
\centerline{\epsfig{file = 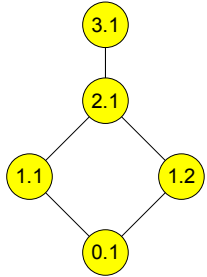,   width = .2\textwidth}}
  \centering
  \caption{The Hasse diagram for ${\mathcal P}_5$}
  \label{fig:P5_poset}
\end{figure}

For $P_6$, there is one set of crossing edge pairs that satisfies Lemma~\ref{lem:p5crossingrule}, but which does not correspond to a geometric realization of $P_6$, namely $\{e_1\times e_3, e_1\times e_4, e_1\times e_5, e_2\times e_5, e_3\times e_5\}$. The following lemma shows that this set can also be eliminated.

\begin{lem}\label{lem:p6rule} \rm
Suppose a geometric graph $\overline{G}$ contains $P_6$ as a subgraph, with vertices and edges numbered in the standard way. If $e_1\times e_3, e_1\times e_4, e_1\times e_5, e_2\times e_5$, and $e_3\times e_5$ are crossings in $\G$, then so is $e_2 \times e_4$.
\end{lem}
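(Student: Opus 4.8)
The plan is to argue by contradiction in a style parallel to Lemma~\ref{lem:p5crossingrule}: assume $e_1\times e_3$, $e_1\times e_4$, $e_1\times e_5$, $e_2\times e_5$ and $e_3\times e_5$ are all crossings of $\overline G$, but $e_2\times e_4$ is not, and derive a geometric impossibility. First I would fix the line $\ell$ determined by $e_1$ and use the three crossings $e_1\times e_3$, $e_1\times e_4$, $e_1\times e_5$ to locate the vertices $3,4,5,6$ relative to $\ell$: since $e_3=\{3,4\}$, $e_4=\{4,5\}$ and $e_5=\{5,6\}$ each cross $e_1$, consecutive vertices among $3,4,5,6$ lie on opposite sides of $\ell$, so (up to relabeling which side is ``above'') $3$ and $5$ lie on one side and $4$ and $6$ on the other. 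Exactly as in Lemma~\ref{lem:p5crossingrule}, for $e_3$ to meet $e_1$ the vertex $3$ (and all of $e_2$) must lie inside the cone $C_4$ with apex $4$ through $1$ and $2$; I would then track how the further crossings $e_3\times e_5$ and $e_2\times e_5$ force $5$, and hence $6$ and the segment $e_5$, into a nested sequence of cones, and show this configuration cannot avoid having $e_4=\{4,5\}$ cross $e_2=\{2,3\}$.

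The key steps, in order, would be: (1) set up $\ell$, the two half-planes, and the sidedness of $3,4,5,6$ from the three crossings with $e_1$; (2) use $e_1\times e_3$ to confine $3$ and $e_2$ to the cone $C_4$ at apex $4$, and similarly use $e_1\times e_5$ together with $e_3\times e_5$ to confine $5$ (and the relevant portion of $e_5$) to a sub-region determined by the rays through $2,3,4$; (3) bring in the last crossing $e_2\times e_5$ to pin down the position of $6$ on the far side of $e_2$ from $5$; (4) observe that the resulting placement of $4$ and $5$ — $4$ on one side of $\ell$ with the cone relations above, $5$ on the other side ``beyond'' $e_2$ as forced in steps (2)–(3) — puts the segment $e_4$ across the segment $e_2$, contradicting the assumption that $e_2\times e_4$ is not a crossing. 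It may be cleanest to phrase step (4) via a parity/Jordan-curve argument: the path $e_2,e_3,e_4$ from $2$ to $5$ together with the known crossings forces $2$ and $5$ into opposite regions cut out by the relevant segments, so one of the edges of the subpath must be crossed, and Lemma~\ref{lem:p5crossingrule} applied to the subpath on vertices $1,\dots,5$ (whose crossings $e_1\times e_3$ and $e_1\times e_4$ we already have) can be used to rule out the alternative and leave only $e_2\times e_4$.

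I expect the main obstacle to be step (2)–(3): keeping careful control of the nested cone regions once three edges are forced through $e_1$ and two more ($e_2$, $e_3$) are forced through $e_5$, since the general-position hypothesis must be used repeatedly and the regions are easy to mis-draw. A supporting figure analogous to Figure~\ref{fig:lemma3}, labeling $\ell$, the cone $C_4$, and the successive rays through $2,3,4$, will likely be essential to make the argument readable; the algebra itself is light, but the case of which side of $\ell$ each vertex lies on and the orientation of the cones needs to be nailed down to avoid a spurious ``other configuration.'' An alternative, possibly shorter, route worth trying is purely combinatorial: show that the proposed crossing set, restricted to the subpath $1,2,3,4,5$, already matches the hypotheses of Lemma~\ref{lem:p5crossingrule} once we know $e_2\times e_5$ and $e_3\times e_5$ force $e_2\times e_4$ by an analogous ``shifted'' version of that lemma applied to the subpath $2,3,4,5,6$ — i.e., prove and use a relabeled instance of Lemma~\ref{lem:p5crossingrule} rather than redo the geometry from scratch.
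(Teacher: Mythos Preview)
Your main geometric plan is reasonable but takes a harder route than the paper.  You anchor on the line through $e_1$ and then try to nest cones with apex $4$; the paper instead anchors on the line through $e_2$.  That single change is the key simplification: with $e_2$ horizontal and $e_1,e_3$ drawn above it (possible because $e_1\times e_3$), the crossing $e_1\times e_4$ by itself confines vertex $5$ to one of three explicit regions $T,E,S$ in the picture, and then the three remaining hypotheses are used \emph{together} as the single statement ``$e_5$ crosses all of $e_1,e_2,e_3$'' to kill $T$ and $E$ in one stroke.  The surviving region $S$ lies on the far side of $e_2$ from vertex $4$, so $e_4=\{4,5\}$ is forced across $e_2$.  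Your anchoring on $e_1$ spreads those same three hypotheses across separate cone constraints and leaves your step~(4) genuinely unfinished; it can probably be pushed through, but not without more casework than you indicate.

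Your proposed shortcut via a relabeled Lemma~\ref{lem:p5crossingrule} on the subpath $2,\dots,6$ does not work.  On that subpath the lemma reads ``$e_2\times e_4$ and $e_3\times e_5$ force $e_2\times e_5$,'' which is the wrong direction: you have $e_2\times e_5$ and $e_3\times e_5$ and want $e_2\times e_4$, and Lemma~\ref{lem:p5crossingrule} has no converse.  Likewise the appeal in your step~(4) to Lemma~\ref{lem:p5crossingrule} on $1,\dots,5$ yields nothing, since from $e_1\times e_3$ and $e_1\times e_4$ that lemma imposes no constraint on $e_2\times e_4$.  So the combinatorial route is a dead end; you really do need the geometry, and choosing $e_2$ rather than $e_1$ as the reference line is what makes it short.
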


\begin{proof}
Since edges $e_1$ and $e_3$ cross, we may assume without loss of generality that $e_2$ is horizontal, and that $e_1$ and $e_3$ lie above $e_2$, as indicated in Figure~\ref{fig:claim2}. Since 
$\G$ contains the crossing $e_1\times e_4$,
vertex 5 is in one of the regions $T, E$, or $S$ shown  in Figure~\ref{fig:claim2}. 
If vertex 5 is in $E$ or $T$, then $e_5$ cannot cross all three of the edges $e_1, e_2,$ and $e_3$.
Thus  vertex 5 is in $S$, forcing the crossing $e_2 \times e_4$.
\end{proof}

\begin{figure}[htbp]
 \centerline{\epsfig{file = 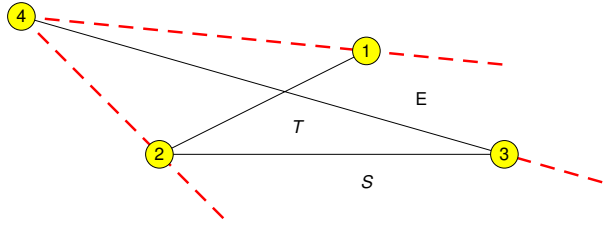,   width = .6\textwidth}}
  \caption{For use in the proof of Lemma~\ref{lem:p6rule}}
  \label{fig:claim2}
\end{figure}

We are now able to list all the non-isomorphic geometric realizations of $P_6$ and give the Hasse diagram for ${\mathcal P}_6$.

\begin{thm}\label{thm:p6geoms} \rm   The poset ${\mathcal P}_6$ has the following thirty-one non-isomorphic geometric realizations and has Hasse diagram as given in Figure \ref{fig:P6_poset}. 

 {\em 0 crossings:} 0.1 = $\emptyset$;

 {\em 1 crossing:} 
1.1 = $\{e_1 \times e_3\} \equiv \{e_3 \times e_5\}$,  
1.2 = $\{e_1 \times e_4\} \equiv \{e_2 \times e_5\}$,  
1.3 = $\{e_1 \times e_5\}$,  
1.4 = $\{e_2 \times e_4\}$;

{\em 2 crossings:} 
2.1 = $\{e_1 \times e_3, e_1 \times e_4\}$, 
2.2 =  $\{e_1 \times e_3, e_1 \times e_5\}$,
2.3 = $\{e_1 \times e_3, e_2 \times e_5\}$,
2.4 = $\{e_1 \times e_3, e_3 \times e_5\}$,  
2.5 = $\{e_1 \times e_4, e_1 \times e_5\}$,
2.6 = $\{e_1 \times e_4, e_2 \times e_4\}$,
2.7 = $\{e_1 \times e_4, e_2 \times e_5\}$,  
2.8 = $\{e_1 \times e_5, e_2 \times e_4\}$;

 {\em 3 crossings:} 
3.1 = $\{e_1 \times e_3, e_1 \times e_4, e_1 \times e_5\}$, 
3.2 = $\{e_1 \times e_3, e_1 \times e_4, e_2 \times e_4\}$,
3.3 = $\{e_1 \times e_3, e_1 \times e_4, e_2 \times e_5\}$,
3.4 = $\{e_1 \times e_3, e_1 \times e_4, e_3 \times e_5\}$,
3.5 = $\{e_1 \times e_3, e_1 \times e_5, e_2 \times e_5\}$,
3.6 = $\{e_1 \times e_3, e_1 \times e_5, e_3 \times e_5\}$,  
3.7 = $\{e_1 \times e_4, e_1 \times e_5, e_2 \times e_4\}$ 
3.8 = $\{e_1 \times e_4, e_1 \times e_5, e_2 \times e_5\}$,  
3.9 = $\{e_1 \times e_4, e_2 \times e_4, e_2 \times e_5\}$;

 {\em 4 crossings:} 
4.1 = $\{e_1 \times e_3, e_1 \times e_4, e_1 \times e_5, e_2 \times e_4\} 
 \equiv \{e_1 \times e_5, e_2 \times e_4, e_2 \times e_5, e_3 \times e_5\}$,  
4.2 = $\{e_1 \times e_3, e_1 \times e_4, e_1 \times e_5, e_2 \times e_5\}
 \equiv \{e_1 \times e_4, e_1 \times e_5, e_2 \times e_5, e_3 \times e_5\}$,  
4.3 = $\{e_1 \times e_3, e_1 \times e_4, e_1 \times e_5, e_3 \times e_5\}
 \equiv \{e_1 \times e_3, e_1 \times e_5, e_2 \times e_5, e_3 \times e_5\}$,  
4.4 = $\{e_1 \times e_3, e_1 \times e_4, e_2 \times e_4, e_2 \times e_5\}
 \equiv \{e_1 \times e_4, e_2 \times e_4, e_2 \times e_5, e_3 \times e_5\}$,  
4.5 = $\{e_1 \times e_3, e_1 \times e_4, e_2 \times e_5, e_3 \times e_5\}$,  
4.6 = $\{e_1 \times e_4, e_1 \times e_5, e_2 \times e_4, e_2 \times e_5\}$;

 {\em 5 crossings:} 
5.1 = $\{e_1 \times e_3, e_1 \times e_4, e_1 \times e_5, e_2 \times e_4, e_2 \times e_5\} 
 \equiv \{e_1 \times e_4, e_1 \times e_5, e_2 \times e_4, e_2 \times e_5, e_3 \times e_5\}$,  
5.2 = $\{e_1 \times e_3, e_1 \times e_4, e_2 \times e_4, e_2 \times e_5, e_3 \times e_5\}$;

 {\em 6 crossings:} 
6.1 = $\{e_1 \times e_3, e_1 \times e_4, e_1 \times e_5, e_2 \times e_4, e_2 \times e_5, e_3 \times e_5\}$.
\end{thm}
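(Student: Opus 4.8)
The proof has two parts: establishing that there are exactly thirty-one isomorphism classes of realizations of $P_6$, and computing the resulting poset. For the first part, the plan is to run the enumeration scheme described just before the theorem. Start from the six a~priori admissible crossing pairs for $P_6$, namely $e_1\times e_3$, $e_1\times e_4$, $e_1\times e_5$, $e_2\times e_4$, $e_2\times e_5$, $e_3\times e_5$ (the pairs $e_i\times e_j$ with $j-i\ge 2$); by Lemma~\ref{lem:pn_max} every realization of $P_6$ has at most six crossings, so the $2^6=64$ subsets of this list exhaust the combinatorial possibilities. Prune the list in three passes. First, discard every subset forbidden by Lemma~\ref{lem:p5crossingrule} applied to each of the two copies of $P_5$ inside $P_6$ (on the vertex sets $\{1,\dots,5\}$ and $\{2,\dots,6\}$): a subset containing $e_1\times e_3$ and $e_2\times e_4$ but not $e_1\times e_4$, or containing $e_2\times e_4$ and $e_3\times e_5$ but not $e_2\times e_5$, is impossible. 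Second, discard the one extra subset $\{e_1\times e_3,e_1\times e_4,e_1\times e_5,e_2\times e_5,e_3\times e_5\}$ ruled out by Lemma~\ref{lem:p6rule}. Third, identify subsets equivalent under the unique nontrivial automorphism $i\mapsto 7-i$ of $P_6$, which fixes the pairs $e_1\times e_5$ and $e_2\times e_4$ and interchanges $e_1\times e_3\leftrightarrow e_3\times e_5$ and $e_1\times e_4\leftrightarrow e_2\times e_5$; a short orbit count (Burnside) then confirms that exactly thirty-one classes survive, and they are checked to be the thirty-one listed, sorted by crossing number.

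To complete the first part, one must verify that every surviving crossing set actually occurs: for each of the thirty-one sets I would exhibit a straight-line drawing of $P_6$ having exactly those crossings (these drawings, together with their line/crossing graphs, are collected in the appendix). Since a realization of $P_6$ is determined up to geometric isomorphism by its set of crossing pairs — modulo the action of the automorphism $i\mapsto 7-i$ — this shows the list of thirty-one is both complete and irredundant. \emph{I expect this construction step to be the genuine obstacle}: the combinatorial filtering is mechanical, but no general result guarantees that a crossing set surviving Lemmas~\ref{lem:p5crossingrule} and~\ref{lem:p6rule} is geometrically realizable, so every realization — in particular the maximal one, $6.1$, whose existence is the case $n=6$ of the tightness clause of Lemma~\ref{lem:pn_max} — has to be drawn and checked individually.

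For the poset, apply Proposition~\ref{prop:preceqiff}: since $L(P_6)\cong P_5$ has only the identity and the reversal as automorphisms, for any ordered pair of realizations $\overline{P}_6$, $\widehat{P}_6$ one has $\overline{P}_6\preceq\widehat{P}_6$ if and only if the crossing set of $\overline{P}_6$, or its image under the reversal, is contained in the crossing set of $\widehat{P}_6$. Thus the entire order relation is read off from the thirty-one crossing sets by a finite containment check — at most two containments per ordered pair — and the parameter inequalities of Proposition~\ref{prop:observe} and Corollary~\ref{cor:vector} can be used to discard many pairs at a glance whenever the crossing counts agree or are close. Taking the transitive reduction of the resulting relation produces the Hasse diagram of Figure~\ref{fig:P6_poset}; the only real cost here is bookkeeping over the roughly $\binom{31}{2}$ pairs, which is organized by the grading into seven levels according to crossing number.
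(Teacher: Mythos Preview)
Your proposal is correct and matches the paper's approach essentially step for step: enumerate subsets of the six admissible crossing pairs, prune by Lemmas~\ref{lem:p5crossingrule}, \ref{lem:pn_max}, and~\ref{lem:p6rule}, pass to orbits under the reversal automorphism, exhibit explicit drawings (deferred to the appendix), and determine the order via Proposition~\ref{prop:preceqiff} using the two automorphisms of $L(P_6)$. Your write-up is in fact more explicit than the paper's short proof about how the pruning and orbit identification are carried out, and you correctly flag the realizability check as the only non-mechanical step.
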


\begin{proof}
It is straightforward to find geometric realizations with the given sets of crossings. These realizations, together with their line/crossing graphs (which aid in determining the poset relations), appear in Appendix~\ref{app:paths}.
It follows from Lemmas~\ref{lem:p5crossingrule}, \ref{lem:pn_max}, and \ref{lem:p6rule} that there are no others. 
\end{proof}

The following theorem lists some  properties of ${\mathcal P}_n$ for $n \ge 3$.

\begin{thm}\label{thm:pn_props} \rm For $n \ge 3$, ${\mathcal P}_n$ has the following properties.
\begin{enumerate}
  \item\label{a1} There is a unique minimal element, corresponding to the plane realization of $P_n$.
  \item\label{a2} There is a unique maximal element, corresponding to the realization of $P_n$ with  ${(n-2)(n-3)}/{2}$ crossings.
  \item\label{a3} ${\mathcal P}_n$ has a chain of size ${(n-2)(n-3)}/{2}+ 1$. In particular, for each $c$  with  $0 \le c \le {(n-2)(n-3)}/{2}$, there is at least one realization of $P_n$ with exactly $c$ crossings.
  \item\label{a4} For $1 \le k \le n$,  ${\mathcal P}_k$ is  isomorphic to a sub-poset of ${\mathcal P}_n$.
\end{enumerate}
\end{thm}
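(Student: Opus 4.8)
## Proof Proposal for Theorem \ref{thm:pn_props}

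\noindent\textbf{Proof plan.} The common tool for the first three parts is this: if $\overline{P}_n$ and $\widehat{P}_n$ are realizations of $P_n$ for which every crossing of $\overline{P}_n$ is also a crossing of $\widehat{P}_n$, then the identity map on $V(P_n)$ is a vertex-injective geometric homomorphism, so $\overline{P}_n\preceq\widehat{P}_n$; conversely, by Proposition~\ref{prop:observe}(\ref{a}), $\overline{P}_n\preceq\widehat{P}_n$ forces $cr(\overline{P}_n)\le cr(\widehat{P}_n)$. For part \ref{a1}, a plane realization of $P_n$ exists, and since any two realizations of the same abstract graph with the same (here empty) crossing set are geometrically isomorphic, it is unique; by the observation it lies below every realization, so it is the unique minimal element. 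For part \ref{a2}, the count in the proof of Lemma~\ref{lem:pn_max} shows that the number of \emph{possible} crossing pairs $e_i\times e_j$ with $j\ge i+2$ is exactly ${(n-2)(n-3)}/2$; hence a realization achieving the bound of Lemma~\ref{lem:pn_max} must have \emph{all} such pairs as crossings, which determines it up to geometric isomorphism, and the observation again makes every realization lie below it, so it is the unique maximal element.

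For part \ref{a3}, I would reduce to producing a nested family $\emptyset=S_0\subsetneq S_1\subsetneq\cdots\subsetneq S_M$, $M={(n-2)(n-3)}/2$, with $|S_c|=c$ and each $S_c$ realized by some $R_c$, a geometric realization of $P_n$ whose crossing set is $S_c$. The observation then gives a chain $R_0\preceq R_1\preceq\cdots\preceq R_M$ of length $M+1$, whose terms are pairwise non-isomorphic by Proposition~\ref{prop:observe}(\ref{a}); this gives part \ref{a3}, including a realization of $P_n$ with each crossing count $c$, $0\le c\le M$. To build the $S_c$'s I would start from the maximum realization supplied by Lemma~\ref{lem:pn_max} and undo its crossings one at a time --- for instance, translate a single vertex along a path so that exactly one crossing is destroyed at each step and none created --- or proceed by induction on $n$: take a nested family for $P_{n-1}$ drawn on vertices $2,\dots,n$, attach a pendant edge $e_1$ at vertex $2$ crossing nothing, and then sweep vertex $1$ outward so that $e_1$ successively acquires the crossings $e_1\times e_3,\dots,e_1\times e_{n-1}$, one per step, ending at the maximum realization of $P_n$. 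Checking that each intermediate configuration is actually realizable (no analogue of Lemmas~\ref{lem:p5crossingrule} or \ref{lem:p6rule} is violated) is the only point needing care here.

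Part \ref{a4} is where I expect the real work. By transitivity of ``is isomorphic to a sub-poset of'' it suffices to embed $\mathcal{P}_{n-1}$ into $\mathcal{P}_n$ for each $n$. The natural attempt sends a realization $\overline{P}_{n-1}$ to the realization of $P_n$ obtained by attaching a pendant edge $e_{n-1}$ in a tiny disk around vertex $n-1$ so that it crosses nothing (possible since only $e_{n-2}$ meets that vertex), leaving the crossing set unchanged. The obstruction is that this is not obviously well defined on geometric isomorphism classes: $\overline{P}_{n-1}$ and its image under the order-reversing automorphism of $P_{n-1}$ are geometrically isomorphic, but that automorphism does not extend to the order-reversing automorphism of $P_n$, so appending the pendant at vertex $n-1$ to each of them can yield non-isomorphic realizations of $P_n$; in particular, the family of realizations of $P_n$ with $e_{n-1}$ uncrossed need not be a copy of $\mathcal{P}_{n-1}$. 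The plan is therefore to fix, once and for all, a preferred crossing set in each order-reversal orbit of realizable crossing sets for $P_{n-1}$, attach the pendant to the corresponding drawing, and then verify that the resulting map $\mathcal{P}_{n-1}\to\mathcal{P}_n$ is injective and both preserves \emph{and reflects} $\preceq$, using --- via Proposition~\ref{prop:preceqiff} --- that $L(P_n)\cong P_{n-1}$ has only two automorphisms, so only the identity and one reflection need be checked. Carrying out this choice uniformly in $n$, and in particular controlling how the reflection of $L(P_n)$ can move a crossing set supported away from $e_{n-1}$, is the main obstacle; I would likely treat reflection-symmetric and asymmetric realizations separately and exploit the clean ``middle'' embedding $\mathcal{P}_{n-2}\hookrightarrow\mathcal{P}_n$ --- in which the reflection of $P_n$ \emph{does} restrict to that of $P_{n-2}$ --- so that only the parity-reducing step $\mathcal{P}_{n-1}\hookrightarrow\mathcal{P}_n$ requires the delicate bookkeeping.
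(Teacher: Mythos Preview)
Your treatment of parts~\ref{a1}--\ref{a3} matches the paper's. The paper dismisses \ref{a1} and \ref{a2} as ``easily seen,'' and for \ref{a3} it uses exactly your first idea: starting from a realization with $c\ge 1$ crossings, slide vertex $n$ back along $e_{n-1}$ past one crossing (moving on to $e_{n-2}$ if $e_{n-1}$ is already uncrossed, and so on) to obtain a predecessor with $c-1$ crossings. With this sliding construction your caveat about realizability evaporates: each intermediate picture is literally a straight-line drawing, so nothing like Lemma~\ref{lem:p5crossingrule} or~\ref{lem:p6rule} needs to be checked.

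For part~\ref{a4} you are \emph{more} careful than the paper. The paper's entire argument is to replace the uncrossed terminal segment of $e_{k-1}$ by a short path out to vertex $n$, obtaining a realization of $P_n$ with the same crossing set, and then to assert that this exhibits $\mathcal P_k$ as a subposet of $\mathcal P_n$; it does not address the representative-dependence you flag. Your concern is genuine. In the passage from $\mathcal P_5$ to $\mathcal P_6$, the $\mathcal P_5$-class $1.1$ has the two representatives $\{e_1\times e_3\}$ and $\{e_2\times e_4\}$, whose extensions land in the \emph{distinct} $\mathcal P_6$-classes $1.1$ and $1.4$; and if one chooses $\{e_2\times e_4\}$ for $1.1$ while choosing $\{e_1\times e_3,e_1\times e_4\}$ for the $\mathcal P_5$-class $2.1$, the images ($1.4$ and $2.1$ in $\mathcal P_6$) are incomparable even though $1.1\prec 2.1$ in $\mathcal P_5$. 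So a coherent choice of representatives really is required, and your plan---fix representatives and then use that $\mathrm{Aut}(L(P_n))$ has only the identity and the reversal to check that the resulting map both preserves and reflects $\preceq$---is the right way to complete what the paper leaves implicit. Your observation that the ``centered'' embedding $\mathcal P_{n-2}\hookrightarrow\mathcal P_n$ is automatically compatible with reversal is a clean way to handle the even-parity step.
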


\begin{proof}
Properties~\ref{a1} and \ref{a2} are easily seen to be true. For Property~\ref{a3}, consider a geometric realization  $\P_n$ of $P_n$ with $c \ge 1$ crossings. Such a realization can be modified to create a new realization $\hatP_n$ with $c-1$ crossings, and with $\hatP_n \prec \P_n$, by
sliding the vertex $n$ along edge $e_{n-1}$ until it passes over a crossing edge, and then erasing the section of $e_{n-1}$ that extends beyond this point. If $e_{n-1}$ has no crossings, we slide vertices $n$ and $n-1$ along edge $e_{n-2}$, erasing what remains of $e_{n-1}$ and $e_{n-2}$, and so on.  
We can continue in a similar manner to remove one crossing at a time until there are none left; the process is illustrated in Figure~\ref{fig:P7_embed}. Since Lemma~\ref{lem:pn_max} guarantees that $P_n$ has a realization with  ${(n-2)(n-3)}/{2}$ crossings,  Property~\ref{a3} follows.

For Property~\ref{a4}, suppose we have some geometric realization of $P_k$. We can replace the uncrossed segment of edge $e_{k-1}$ nearest to vertex $k$ with a path from $k$ to $n$ to obtain a geometric realization of $P_n$ with the same crossings. By doing this for each realization of $P_k$, we see that its poset of geometric realizations is isomorphic to a sub-poset of the poset of geometric realizations of $P_n$.
\end{proof}

A {\em cover} of an element $x$ in a poset $\mathcal{P}$ is an element $y \in \mathcal{P}$ such that $x \prec y$ and  no $z \in \mathcal{P}$ satisfies $x \prec z \prec y$. $\mathcal{P}$ is called a  {\it graded} poset if there is  a {\em rank} function $\rho:\mathcal{P}\to\mathbb{N}$ such that for all $x, y \in \mathcal{P}$, 1) all minimal elements have the same value under the rank function, 2)  if $x \prec y$, then $\rho(x) < \rho(y)$, and 3) if $y$ covers $x$, then $\rho(y) = \rho(x) + 1$.  Note that if a poset is graded then all maximal chains between a given pair of elements must have the same length.

A reasonable conjecture for geometric homomorphism posets is that the number of edge crossings acts as a rank function.  Condition 1 holds by Proposition~\ref{prop:observe}.  However, Condition 2 fails to hold in exactly one instance in $\mathcal{P}_6$:  realization 6.1 covers realization 4.3, yet it has two more crossings.  In fact, the poset $\mathcal{P}_6$ does not admit any rank function, because it has maximal chains between 0.1 and 6.1 which have different lengths: $0.1 \prec 1.4 \prec 2.8 \prec 3.7 \prec 4.6 \prec 5.1 \prec 6.1$ and $0.1 \prec 1.1 \prec 2.2 \prec 3.5 \prec 4.3 \prec 6.1$. Hence, $\mathcal{P}_6$ is not a graded poset. It follows from Property~\ref{a4} that $\mathcal{P}_n$ is not a graded poset for any $n \geq 6$.

 \begin{figure}[htbp]
 \centerline{\epsfig{file = 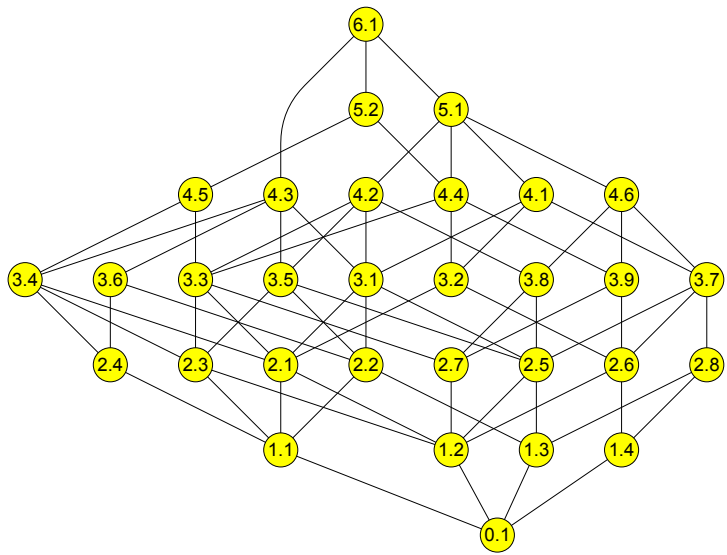,   width = .7\textwidth}}
  \caption{The Hasse diagram for ${\mathcal P}_6$}
  \label{fig:P6_poset}
\end{figure}

A {\it lattice} is a poset in which any two elements have a unique supremum (join) and unique infimum (meet).  Figure \ref{fig:P6_poset} shows that $\mathcal{P}_6$ is not a lattice because (for example) realizations 3.5 and 3.1 have both 4.3 and 4.2 as suprema, and realizations 4.3 and 4.2 have both 3.5 and 3.1 as infima.  It follows from Property~\ref{a4} that $\mathcal{P}_n$ is not a lattice for any $n \geq 6$.

\begin{figure}[htbp]
 \centerline{\epsfig{file = 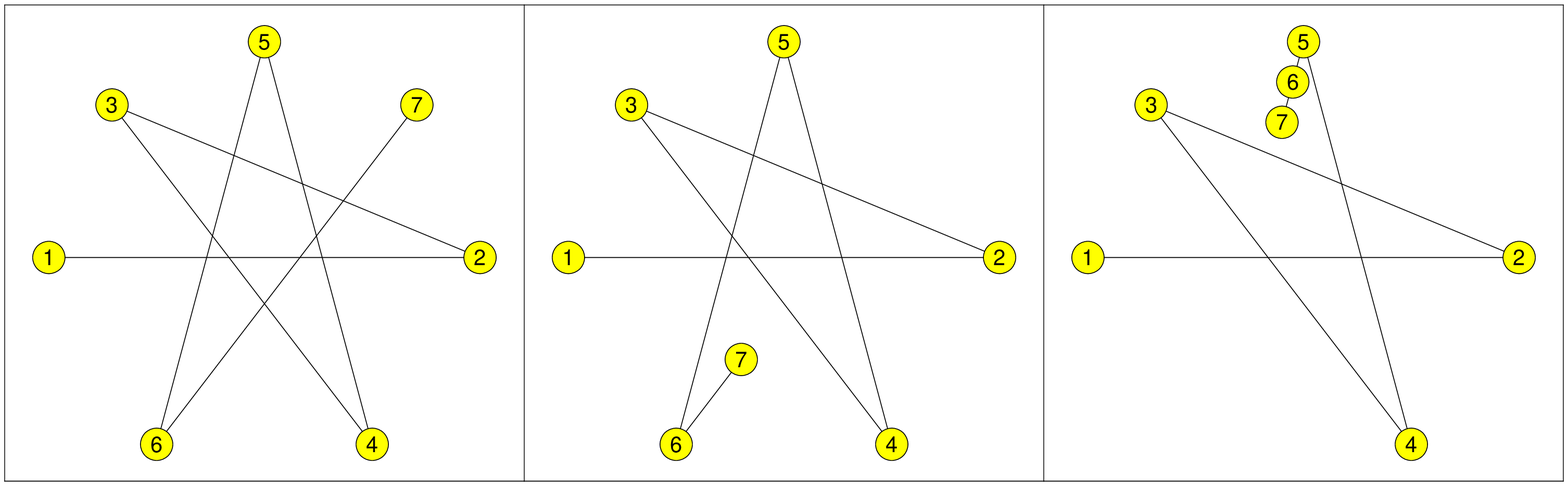,   width = .9\textwidth}}
  \caption{Geometric realizations of $P_7$ with 10 crossings,  6 crossings,  and 3 crossings}
  \label{fig:P7_embed}
\end{figure}

\section{Posets for Geometric Cycles}\label{sect:cycles}

We now determine ${\mathcal C}_n$, the geometric homomorphism poset of the path ${C}_n$ on $n$ vertices, for $n = 3, \ldots, 6$, and we state some properties of this poset for general $n$. Throughout this section we denote the vertices of $C_n$ by $1, 2, \ldots, n$, and its edges by $e_i = \{i, i+1\}, i = 1, \ldots, n-1$, and $e_n = \{n, 1\}$.   

The maximum number of crossings in a geometric realization of $C_n$ was  determined in 1977 by Furry and Kleitman~\cite{Furry77}; their results are summarized in the next lemma.

\begin{lem} \rm {\cite{Furry77}}\label{lem:furry}
For $n \geq 3$, a geometric realization of $C_n$ has at most ${n(n-3)}/{2}$ edge crossings if $n$ is odd and ${n(n-4)}/{2} + 1$ edge crossings if $n$ is even. Moreover, these bounds are tight.
\end{lem}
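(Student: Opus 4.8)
The plan is to prove the upper bound and tightness separately, treating odd and even $n$ differently. For the upper bound I would begin from the single structural fact that in a straight-line drawing two edges sharing a vertex cannot cross, so adjacent edges of $C_n$ never cross; in particular every edge $e$ satisfies $cr(e)\le n-3$, since it has only $n-3$ non-adjacent partners. Deleting the edge $e_n=\{n,1\}$ turns a realization of $C_n$ into a realization of $P_n$ on the path $1,2,\dots,n$, and every crossing of $\overline{C}_n$ either involves $e_n$ or survives there; hence by Lemma~\ref{lem:pn_max}, $cr(\overline{C}_n)=cr(e_n)+cr(\overline{P}_n)\le(n-3)+\tfrac{(n-2)(n-3)}{2}=\tfrac{n(n-3)}{2}$. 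This is already the claimed bound when $n$ is odd, so for odd $n$ only tightness remains.

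For tightness when $n=2m+1$, I would place the vertices on a circle in the cyclic order of the star polygon $\{n/m\}$ — that is, put vertex $v_j$ at circular position $jm \bmod n$ (a single $n$-cycle since $\gcd(n,m)=1$) — so that each edge is a chord subtending exactly $m$ of the $n$ positions. Each of the two open arcs that such a chord cuts from the circle has at most $m$ positions, hence cannot contain both endpoints of another span-$m$ chord; so any two vertex-disjoint edges have interleaving endpoints and therefore cross. Thus every non-adjacent pair crosses and the realization attains $\tfrac{n(n-3)}{2}$ crossings.

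The substance of the lemma is the even case, where the upper bound must be improved by $\tfrac n2-1$: one must show that in every realization at least $\tfrac n2-1$ non-adjacent edge pairs fail to cross. I would argue by a parity count. Fixing an edge $e_i$ on the $x$-axis with endpoints $(0,0)$ and $(1,0)$, the remainder $C_n-e_i$ is a polygonal path between these two points, and if $s_2,\dots,s_{n-1}$ are the signs of the $y$-coordinates of the remaining vertices in path order, then the number of edges meeting the line through $e_i$ equals the number of sign changes of $(s_2,\dots,s_{n-1})$, whose parity is controlled by whether $s_2=s_{n-1}$, i.e. by the relative position of the two vertices adjacent to $e_i$. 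Feeding this local parity data, over all $i$ simultaneously, into the fact that $C_n$ is a closed curve — its polygon has an integer turning number whose parity is tied, by Whitney's formula, to the parity of the number of self-crossings — should force $2\,cr(\overline{C}_n)=\sum_i cr(e_i)\le n(n-4)+2$. Making this rigorous so that it yields exactly the deficit $\tfrac n2-1$, and not merely something weaker, is the part I expect to be the main obstacle; I anticipate it will require a careful analysis of how the sign patterns around consecutive edges constrain one another, in the spirit of Lemmas~\ref{lem:p5crossingrule} and~\ref{lem:p6rule}.

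Finally, for tightness in the even case I would take an extremal realization of $C_{n-1}$, which has $\tfrac{(n-1)(n-4)}{2}$ crossings by the odd case since $n-1$ is odd, and insert the last vertex by subdividing a suitable edge and pulling the new vertex outward across a controlled family of edges, arranging that $\tfrac{n-2}{2}$ new crossings are created while none of the old ones is lost, so that the total becomes $\tfrac{(n-1)(n-4)}{2}+\tfrac{n-2}{2}=\tfrac{n(n-4)}{2}+1$. Verifying that the new vertex can be routed so as to gain exactly this many crossings — and not fewer — is the delicate point of the construction, which I would confirm by an explicit placement combined with a continuity argument as the vertex sweeps out.
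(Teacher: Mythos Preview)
The paper does not prove this lemma at all: it is stated with a citation to Furry and Kleitman~\cite{Furry77} and accompanied only by Figure~\ref{fig:c10_max} illustrating an extremal realization of $C_{10}$. So there is no in-paper argument to compare your proposal against; the authors simply import the result.

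On the merits of your proposal itself: your treatment of odd $n$ is complete and correct. The decomposition $cr(\overline{C}_n)=cr(e_n)+cr(\overline{P}_n)$ together with Lemma~\ref{lem:pn_max} gives the bound $n(n-3)/2$ cleanly, and the star-polygon construction $\{n/m\}$ with $m=(n-1)/2$ is the standard way to realize it.

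The even case, however, has a genuine gap that you yourself flag. The sketch via sign patterns and Whitney's formula does not, as written, deliver the precise deficit $n/2-1$. Whitney's formula controls the parity of the turning number via \emph{signed} crossings, not their count, and the passage from ``the parity of $\sum_i cr(e_i)$ is constrained'' to ``$\sum_i cr(e_i)\le n(n-4)+2$'' is exactly the missing step. A parity argument alone can at best shave one crossing off the naive bound $n(n-3)/2$, not $n/2-1$ of them; getting the full improvement requires a structural argument about how non-crossing pairs must accumulate around the cycle, which is what Furry and Kleitman actually supply. Your tightness construction for even $n$ (subdividing an edge of an extremal odd realization and dragging the new vertex) is plausible and the arithmetic checks, but it too is left at the level of a plan rather than a verified placement.
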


Figure~\ref{fig:c10_max} shows such a realization for $n=10$.

\begin{figure}[htbp]
 \centerline{\epsfig{file = 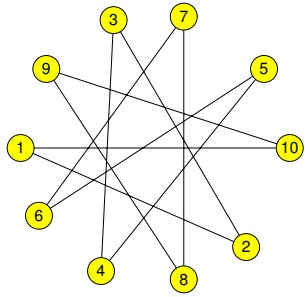,   width = .3\textwidth}}
  \caption{A realization of $C_{10}$ with the maximum number of crossings}
  \label{fig:c10_max}
\end{figure}

The techniques of the previous section can be used to find the elements of $\mathcal{C}_n$.  For $3 \le n \le 5$, every set of crossing edge pairs that satisfies Lemma~\ref{lem:p5crossingrule} corresponds to a geometric realization.
For $C_6$, this is the case for all 
geometric realizations with at most two crossings.  For realizations with three or more crossings, some cases  require additional lemmas.

\begin{lem}\label{lem:insideoutside} \rm
Suppose $\overline{C}_n$ is a 
geometric realization of  
the cycle $C_n$ that has crossings $e_i \times e_k$ and $e_j \times e_\ell$, where $i < j < k < \ell$.  Then there is at least one additional crossing $e_\alpha \times e_\beta$ where $i \le \alpha  \le k$ and $k \le \beta \le i$ (mod $n$) (and  $\{\alpha, \beta\} \ne \{i, k\}$).
\end{lem}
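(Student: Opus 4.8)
The statement concerns a cycle $C_n$ drawn in the plane with two crossings $e_i\times e_k$ and $e_j\times e_\ell$ whose index intervals interleave ($i<j<k<\ell$). The natural object to consider is the closed curve in the plane traced out by the drawing of $C_n$. Since this is a closed curve, it separates the plane into regions, and the claim is essentially a parity/Jordan-curve statement: if we follow the cyclic arc of the cycle from vertex $i$ around to vertex $k$ "one way" (through edges $e_i,e_{i+1},\dots,e_{k-1}$) versus "the other way" (through $e_k,e_{k+1},\dots$, back to $e_i$), these two arcs, together with the segment where $e_i$ and $e_k$ cross, bound a closed subcurve; the crossing $e_j\times e_\ell$ witnesses that the arc $e_{i+1},\dots,e_{k-1}$ is "entangled" with the complementary arc, and one cannot disentangle them without creating a further crossing between an edge of one arc and an edge of the other.

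The concrete plan is as follows. First I would set up the following picture: the crossing point $p$ of $e_i$ and $e_k$ splits $e_i$ into two half-open segments and likewise $e_k$; take the portion of $e_i$ from $p$ to vertex $i+1$ (going "into" the first arc) together with edges $e_{i+1},\dots,e_{k-1}$ and the portion of $e_k$ from vertex $k$ to $p$ — call this the \emph{inside arc} $A$, a simple polygonal arc from $p$ to $p$, i.e.\ a closed polygonal curve through $p$. By the Jordan curve theorem $A$ (if simple; otherwise it already contains a crossing among $\{e_{i},\dots,e_k\}$ and we are done) bounds a disk $\Delta$. Second, I would locate vertex $j$ and vertex $\ell$ relative to $\Delta$: since $i<j<k$, vertex $j$ lies on the inside arc, hence on $\partial\Delta$ (or, more precisely, edges $e_j$ and $e_{j-1}$ with endpoint $j$ lie along $\partial\Delta$); since $k<\ell$ and $\ell\le n$ (indices mod $n$, with $\ell$ outside the block $[i,k]$), vertex $\ell$ and the edges $e_\ell,e_{\ell-1}$ belong to the complementary arc. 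Third — and this is the heart — I would argue that the edge $e_\ell$, which crosses $e_j\subseteq\partial\Delta$, must therefore enter the interior of $\Delta$; but $e_\ell$'s endpoints both lie on the complementary arc, which is disjoint from the interior of $\Delta$, so $e_\ell$ must exit $\Delta$ again, and each time it crosses $\partial\Delta$ it crosses one of the edges $e_i,\dots,e_k$. Counting crossings of $e_\ell$ with $\partial\Delta$ by parity, and observing that $e_j\times e_\ell$ is one such crossing on the boundary, forces the existence of a second crossing of $e_\ell$ with some $e_\alpha$, $i\le\alpha\le k$; alternatively, if $e_\ell$ re-crosses $\partial\Delta$ at the segment $[p,i+1]\subseteq e_i$ or $[k,p]\subseteq e_k$ this is again a crossing $e_\ell\times e_i$ or $e_\ell\times e_k$, and in all cases we obtain a crossing $e_\alpha\times e_\beta$ with one index in $[i,k]$ and the other in the complementary block $[k,i]$ (mod $n$), and distinct from $\{i,k\}$ because it involves $e_j$ or $e_\ell$ with $i<j<k<\ell$.

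The main obstacle I expect is bookkeeping around degenerate configurations and the precise parity count: $e_\ell$ is a single segment, so it crosses the closed curve $\partial\Delta$ an even number of times \emph{if both its endpoints are outside $\Delta$}, but one of those endpoints, vertex $\ell$, I have only placed on the complementary arc — I need to be careful that it is genuinely in the closed exterior (it is, since the complementary arc is part of $\partial$ of the \emph{other} region), and also that $p$ itself, being a single point on $\partial\Delta$, is not accidentally an endpoint of $e_\ell$ (it is not, as $p$ is an interior crossing point). A second subtlety is the mod-$n$ phrasing of the conclusion "$i\le\alpha\le k$ and $k\le\beta\le i$": I would handle this by simply declaring the cyclic block $B=\{i,i+1,\dots,k\}$ and its complement $B'=\{k,k+1,\dots,n,1,\dots,i\}$ and showing the new crossing pairs an edge with index in $B$ against one with index in $B'$. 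Once the Jordan-curve setup is fixed, the rest is a short topological argument; I would not belabor the planarity-of-the-arc case, since a non-simple inside arc immediately yields a crossing among $e_i,\dots,e_k$, which already satisfies the conclusion (taking $\beta=k$ or $\alpha=i$ on the boundary of the block).
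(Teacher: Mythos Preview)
Your overall strategy is the paper's: split the cycle at the crossing point $p$ of $e_i$ and $e_k$ into an ``inside'' arc $A$ (through $e_{i+1},\dots,e_{k-1}$, closed up at $p$) and a complementary arc, then use a Jordan/parity argument. But two steps in your execution do not go through as written.

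First, the non-simple case is mishandled. A self-crossing of $A$ has the form $e_a\times e_b$ with $i\le a<b\le k$; when $i<a$ and $b<k$ neither index lies in the complementary cyclic block $\{k,\dots,i\}$, so such a crossing does \emph{not} satisfy the conclusion. Your parenthetical ``taking $\beta=k$ or $\alpha=i$ on the boundary of the block'' is simply unjustified. The paper avoids this case split entirely by allowing the red curve to be non-simple and arguing with the components of its complement directly.

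Second, the assertion that ``the complementary arc is disjoint from the interior of $\Delta$'' is unproved, and the later remark that this arc is ``part of $\partial$ of the other region'' is false: $\partial\Delta=A$, not the complementary arc. What your $e_\ell$-parity argument actually needs is that vertices $\ell$ and $\ell{+}1$ lie in the \emph{same} component of $\mathbb R^2\setminus A$. To get that you must (under a no-additional-crossing hypothesis) track the complementary arc from $p$ out to $\ell$ on one side and out to $\ell{+}1$ on the other, and you must also check that the two half-edges of the complementary arc at $p$ enter the same local region of the complement of $A$. This last point is exactly what the paper uses: it follows the whole blue arc from $p$ back to $p$, notes that (assuming no additional red--blue crossing) the two blue half-edges at $p$ lie in the same region, and observes that a single crossing $e_j\times e_\ell$ then gives a parity contradiction. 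Once you supply that argument you have reproduced the paper's proof; the restriction to the single segment $e_\ell$ buys nothing.
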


\begin{proof}
Suppose there is no such additional crossing. Place a vertex $v$ at the crossing $e_i \times e_k$, subdividing each of those two edges. Starting at edge 
$\{v, i+1\}$ of the modified graph, follow the cycle in order of increasing vertex number, coloring each edge red, until the crossing $e_i \times e_k$ is reached again at edge 
$\{k, v\}$.
Then follow the rest of the cycle, beginning at  edge 
$\{v, k+1\}$,
coloring each edge blue, until the final edge 
$\{i, v\}$ is reached. Note that edge $e_j$ is red and edge $e_\ell$ is blue. 
The red cycle is a closed, but not necessarily simple, rectilinear curve in the plane.
From the hypotheses of the lemma and our assumption that the conclusion is false, this red curve does not cross either of the edges 
$\{i, v\}$ and $\{v, k+1\}$,
so these two edges lie in the same region of the plane determined by the red curve. If we now follow the blue curve starting at $k+1$, then the red-blue crossing $e_j \times e_\ell$ takes the blue curve into a different region of the plane determined by the red curve. But since we have assumed that the additional crossing of the lemma does not exist, the blue curve cannot return to end at vertex $i$, which is a contradiction. 
\end{proof}

Lemma~\ref{lem:c6crossingrules} is a multi-part technical lemma. 
We prove the first part below; the proofs of the others, which are similar to the proof of Lemma~\ref{lem:p6rule}, appear in Appendix~\ref{app:cycles}.

\begin{lem}\label{lem:c6crossingrules} \rm
Let $\overline{C}_6$ be a geometric realization of the cycle $C_6$,  with edges labeled consecutively,  $e_1,  e_2,  \ldots,  e_6$.
\begin{enumerate}
  \item\label{cl1} If $\overline{C}_6$ contains the crossings $e_1\times e_3,  e_1\times e_4$,  and $e_1\times e_5$,  then it doesn't contain the crossing $e_2\times e_6$.
  \item\label{cl2} If $\overline{C}_6$ contains the crossings $e_1\times e_3,  e_1\times e_4,  e_1\times e_5,  e_2\times e_4$,  and $e_4\times e_6$,  then it also contains the crossing $e_2\times e_5$.
  \item\label{cl3} If $\overline{C}_6$ contains the crossings $e_1\times e_3,  e_1\times e_4,  e_2\times e_4$,  and $e_2\times e_5$,  then it also contains at least one of  the crossings $e_1\times e_5,  e_3\times e_5,  e_3\times e_6$.
  \item\label{cl4} If $\overline{C}_6$ contains the crossings $e_1\times e_3,  e_1\times e_4,  e_2\times e_5$,  and $e_4\times e_6$,  then it also contains at least one of  the crossings $e_2\times e_4,   e_3\times e_5,  e_3\times e_6$.
  \item\label{cl5} If $\overline{C}_6$ contains the crossings $e_1\times e_3,  e_1\times e_4,  e_2\times e_5$,  and $e_3\times e_6$,  then it also contains at least one of  the crossings $e_2\times e_4,  e_2\times e_6,  e_3\times e_5,  e_4\times e_6$.
\end{enumerate}  
\end{lem}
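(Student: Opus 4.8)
The plan is to mimic the geometric argument of Lemma~\ref{lem:p6rule}: fix a convenient normalization of the configuration forced by the first two or three hypothesized crossings, partition the plane into a small number of regions according to where the remaining vertices can lie, and then show that in every region that is consistent with the hypotheses the edge $e_2$ (endpoints $2$ and $3$) and the edge $e_6$ (endpoints $6$ and $1$) are separated in a way that precludes their crossing. Concretely, since $e_1\times e_3$ is a crossing, I would put $e_1$ on a horizontal line $\ell$ and assume vertex $2$ lies on one side, say below. Because $e_1$ crosses all three of $e_3,e_4,e_5$, the vertices $3,4,5,6$ are constrained to lie in a cone-like region; using the fact that $e_3$, $e_4$, $e_5$ each must meet the segment $e_1$ between vertices $1$ and $2$, I can order the crossing points along $e_1$ and thereby pin down the cyclic arrangement of $3,4,5,6$ relative to the endpoints $1,2$. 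The key is that the subpath $2$–$3$–$4$–$5$–$6$ weaves back and forth across the line $\ell$ in a controlled way.

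The main step is then the case analysis. After normalization, vertex $6$ is forced into one of a few regions (analogous to the regions $T,E,S$ in the proof of Lemma~\ref{lem:p6rule}); in each region I check whether the edge $e_6=\{6,1\}$ can reach vertex $1$ without being forced to cross one of the already-present edges in a way that contradicts the absence of $e_2\times e_6$, or whether $e_6$ simply cannot cross $e_2$ at all because $2$ and $3$ lie on the same side of the line through $e_6$. I expect that in each admissible region either $e_6$ fails to separate $2$ from $3$, or placing $6$ so that $e_6$ crosses $e_2$ forces vertex $6$ out of the cone that the hypotheses $e_1\times e_5$ (and the chain $e_3,e_4,e_5$ all crossing $e_1$) require it to be in. Either way the crossing $e_2\times e_6$ is impossible.

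The hard part will be bookkeeping the regions cleanly: unlike $P_6$, the cycle edge $e_6$ closes up the configuration, so I must simultaneously respect the constraints coming from $e_1\times e_5$ (which controls where $5$, hence $6$, can go) and the constraint that the path segment $3$–$4$–$5$ stays inside the narrow cone at vertex $2$ determined by $1$ and $2$. Drawing the correct figure and labeling regions so that the "$e_6$ cannot cross $e_2$" conclusion is visually immediate in each case is where the real work lies; once the figure is right, each case is a one-line separation argument. I would present this exactly as in Lemma~\ref{lem:p6rule}: one normalized figure, a short enumeration of the possible locations of vertex $6$, and a sentence per location.
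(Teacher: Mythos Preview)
Your plan would probably work if carried out, but it is far more elaborate than necessary and you have missed the one-line idea that the paper exploits. The paper's proof of Part~\ref{cl1} involves no region partition and no case analysis at all: it simply lets $h$ be the \emph{line} through edge $e_1$ (i.e., through vertices $1$ and $2$) and counts parity. The path from vertex $3$ to vertex $6$ consists of the three edges $e_3,e_4,e_5$, each of which crosses $e_1$ by hypothesis, so this path crosses the line $h$ an odd number of times; hence vertices $3$ and $6$ lie on opposite sides of $h$. Since vertices $1$ and $2$ lie on $h$ itself, the segment $e_2=\{2,3\}$ lies entirely in the closed half-plane on the side of $3$, while $e_6=\{6,1\}$ lies entirely in the closed half-plane on the side of $6$. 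Two segments in opposite open half-planes (touching $h$ only at distinct endpoints $1,2$) cannot cross, so $e_2\times e_6$ is impossible.

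Compared with your proposal, the paper's argument replaces all of your ``cones,'' ``regions $T,E,S$,'' and location bookkeeping for vertex $6$ by a single parity observation. Your approach tries to pin down \emph{where} vertices $3,4,5,6$ sit relative to cones at vertices $1$ and $2$; the paper only needs to know on which \emph{side} of the line $h$ each vertex sits, and that information is delivered immediately by the hypotheses. The heavier case analysis you outline is the right tool for Parts~\ref{cl2}--\ref{cl5} (and indeed that is what the paper does there), but for Part~\ref{cl1} it obscures a two-sentence proof. If you want to salvage your write-up, note that your very first normalization step --- ``put $e_1$ on a horizontal line $\ell$'' --- already contains everything you need: just track which side of $\ell$ each of $3,4,5,6$ lands on as you walk the path $e_3,e_4,e_5$, and you are done.
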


\begin{proof} 
For part~\ref{cl1},  suppose
 $\overline{C}_6$ contains the crossings $e_1\times e_3,  e_1\times e_4$,  and $e_1\times e_5$. Let $h$ be the  line through edge $e_1$. Since $e_1$ crosses every edge of the path joining vertices 3 and 6,  the vertices 3 and 6 lie on opposite sides of $h$. Thus the edges $e_2=\{2,  3\}$ and $e_6=\{6,  1\}$ also lie on opposite sides of $h$ and so do not cross. 
 \end{proof}

Part~\ref{cl1}
and its proof generalize to give us the following corollary.

\begin{cor}\label{lem:cncrossingrule} \rm
Let $\overline{C}_n$ be a geometric realization of the cycle $C_n$, 
where $n\ge 4$ is even. If $\overline{C}_n$ contains the crossings $e_1\times e_3, e_1\times e_4, \ldots, e_1\times e_{n-1}$, then it doesn't contain the crossing $e_2\times e_n$.
\end{cor}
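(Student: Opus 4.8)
The plan is to mimic the proof of part~\ref{cl1} of Lemma~\ref{lem:c6crossingrules} almost verbatim, since the corollary is precisely the ``$n$ even, arbitrary'' version of that statement. First I would let $h$ be the line determined by edge $e_1 = \{1,2\}$. The hypothesis gives the crossings $e_1 \times e_3, e_1 \times e_4, \ldots, e_1 \times e_{n-1}$; these are exactly the crossings of $e_1$ with every edge of the path $P: 3, 4, \ldots, n$ (that is, the edges $e_3, e_4, \ldots, e_{n-1}$, which together form the path joining vertex $3$ to vertex $n$ in $C_n$). The key observation is that each crossing of $e_1$ with a consecutive edge of this path forces a side-switch: since $e_1$ crosses $e_3 = \{3,4\}$, vertices $3$ and $4$ lie on opposite sides of $h$; since $e_1$ crosses $e_4 = \{4,5\}$, vertices $4$ and $5$ lie on opposite sides of $h$; and so on, up through $e_1$ crossing $e_{n-1} = \{n-1, n\}$.

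Next I would count the parity of side-switches along the path from vertex $3$ to vertex $n$. There are $n-3$ edges in this path ($e_3$ through $e_{n-1}$), each contributing one switch, so vertex $n$ lies on the same side of $h$ as vertex $3$ precisely when $n-3$ is even, i.e.\ when $n$ is odd. Since $n$ is assumed even, $n-3$ is odd, so vertices $3$ and $n$ lie on \emph{opposite} sides of $h$. (This is exactly the point where the evenness hypothesis is used, and it is the only place — so this is the ``main obstacle,'' though it is really just a parity bookkeeping step rather than a genuine difficulty.) I should be a little careful that none of the path vertices lies \emph{on} $h$: by general position no three vertices are collinear, and $1, 2 \in h$, so no other vertex can be on $h$; hence ``opposite sides'' is unambiguous.

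Finally, since vertex $3$ and vertex $n$ are on opposite sides of $h$, and edge $e_2 = \{2,3\}$ has the endpoint $3$ strictly on one side of $h$ while $e_n = \{n, 1\}$ has the endpoint $n$ strictly on the other side, the segments $e_2$ and $e_n$ lie in disjoint closed half-planes determined by $h$ except possibly at their shared boundary point on $h$ — but $e_2$ meets $h$ only at vertex $2$ and $e_n$ meets $h$ only at vertex $1$, and $1 \ne 2$, so in fact $e_2$ and $e_n$ lie in disjoint regions and cannot cross. Therefore $\overline{C}_n$ does not contain the crossing $e_2 \times e_n$, which is the claim. I expect the entire write-up to be three or four sentences, essentially the displayed proof of part~\ref{cl1} with ``$6$'' replaced by ``$n$'' and one added sentence invoking the parity of $n-3$.
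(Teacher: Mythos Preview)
Your proposal is correct and follows exactly the approach the paper intends: the paper states only that ``Part~\ref{cl1} and its proof generalize'' to this corollary, and your argument is precisely that generalization---take the line $h$ through $e_1$, use the $n-3$ forced side-switches along the path from vertex $3$ to vertex $n$, and invoke the parity of $n-3$ (odd since $n$ is even) to put $3$ and $n$ on opposite sides of $h$. If anything, you are more careful than necessary about general position and where $e_2, e_n$ meet $h$; the paper's three-sentence proof for $n=6$ simply asserts that $3$ and $n$ lie on opposite sides and concludes.
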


To determine the the elements of the poset ${\mathcal C}_6$,   look at all possible sets of crossing edge pairs,  and delete sets that don't satisfy Lemmas~\ref{lem:p5crossingrule}, \ref{lem:p6rule}, \ref{lem:insideoutside} or~\ref{lem:c6crossingrules}.

Next,  identify those that are equivalent under an automorphism of $C_n$. There are  $2n$ such automorphisms: each of the rotations and each of these composed with the reflection map. To determine the geometric homomorphisms among the remaining sets, recall that by Proposition~\ref{prop:preceqiff}, it suffices to look for automorphisms of the line graph $L(C_n)$ that extend to homomorphisms on the edge crossing graphs. Since $L(C_n) = C_n$, these automorphisms are precisely the $2n$ automorphisms mentioned above.

Theorem~\ref{thm:c1toc6posets} lists the elements of the poset of geometric realizations of $C_n$ for $3 \le n \le 6$;  all nontrivial realizations are given up to isomorphism.

\begin{thm} \label{thm:c1toc6posets} \rm  Let ${\mathcal C_n}$ be the poset of geometric realizations of $C_n$.
\begin{enumerate}
  \item ${\mathcal C}_3$  is trivial, containing only the plane realization.
  \item ${\mathcal C}_4$ is a chain of two elements, in which the plane realization is the unique minimal element, and the realization with crossing $e_1\times e_3$ is the unique maximal element.
\item ${\mathcal C}_5$ is a chain of five elements: the plane realization $0.1 = \emptyset$, $1.1 = \{e_1\times e_3\}$, $2.1 = \{e_1\times e_3, e_1\times e_4\}$, $3.1 = \{e_1\times e_3, e_1\times e_4, e_2\times e_4\}$, and $5.1 = \{e_1\times e_3, e_1\times e_4, e_2\times e_4, e_2\times e_5, e_3\times e_4\}$.
 \item ${\mathcal C}_6$ has the following twenty-six non-isomorphic geometric realizations and has Hasse diagram as given in Figure~\ref{fig:C6_poset}:

 {\em 0 crossings:}  $0.1 = \emptyset$ (the plane realization);  

 {\em 1 crossing:} 
$1.1 = \{e_1\times e_3\}$,  
$1.2 = \{e_1\times e_4\}$;  

 {\em 2 crossings:} 
$2.1 = \{e_1\times e_3, e_1\times e_4\}, 
2.2 = \{e_1\times e_3, e_1\times e_5\}$,  
$2.3 = \{e_1\times e_3, e_4\times e_6\}$; 

 {\em 3 crossings:}  
$3.1 = \{e_1\times e_3, e_1\times e_4, e_1\times e_5\}, 
3.2 = \{e_1\times e_3, e_1\times e_4, e_2\times e_4\}, 
3.3 = \{e_1\times e_3, e_1\times e_4, e_2\times e_5\}, 
3.4 = \{e_1\times e_3, e_1\times e_4, e_3\times e_5\}, 
3.5 = \{e_1\times e_3, e_1\times e_4, e_3\times e_6\}, 
3.6 = \{e_1\times e_3, e_1\times e_4, e_4\times e_6\}, 
3.7 = \{e_1\times e_3, e_1\times e_5, e_3\times e_5\}, 
3.8 = \{e_1\times e_4, e_2\times e_5, e_3\times e_6\}$;

 {\em 4 crossings:} 
$4.1 = \{e_1\times e_3, e_1\times e_4, e_1\times e_5, e_2\times e_4\}, 
4.2 = \{e_1\times e_3, e_1\times e_4, e_1\times e_5, e_2\times e_5\}, 
4.3 = \{e_1\times e_3, e_1\times e_4, e_1\times e_5, e_3\times e_5\}, 
4.4 = \{e_1\times e_3, e_1\times e_4, e_2\times e_5, e_3\times e_5\}, 
4.5 = \{e_1\times e_3, e_1\times e_4, e_3\times e_6, e_4\times e_6\}$;

 {\em 5 crossings:}  
 $5.1 = \{e_1\times e_3, e_1\times e_4, e_1\times e_5, e_2\times e_4, e_2\times e_5\},  
5.2 = \{e_1\times e_3, e_1\times e_4, e_2\times e_4, e_2\times e_5, e_3\times e_5\},  
5.3 = \{e_1\times e_3, e_1\times e_4, e_2\times e_4, e_2\times e_5, e_3\times e_6\},  
5.4 = \{e_1\times e_3, e_1\times e_4, e_2\times e_5, e_3\times e_6, e_4\times e_6\}$;

 {\em 6 crossings:} 
$6.1 = \{e_1\times e_3, e_1\times e_4, e_1\times e_5, e_2\times e_4, e_2\times e_5, e_3\times e_5\}$,  
$6.2 =   \{e_1\times e_3, e_1\times e_4, e_1\times e_5, e_2\times e_4, e_2\times e_5, e_3\times e_6\}$;

 {\em 7 crossings:} 
$7.1 = \{e_1\times e_3, e_1\times e_4, e_1\times e_5, e_2\times e_4, e_2\times e_5, e_3\times e_6, e_4\times e_6\}$.

\end{enumerate}  
\end{thm}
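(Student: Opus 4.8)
The plan is to treat all four parts uniformly as a single finite classification, carried out in three stages: (1) enumerate the candidate crossing sets; (2) certify that the survivors are realizable; (3) compute the comparabilities. Stages (1)--(2) produce the list of isomorphism classes, and stage (3) produces the Hasse diagrams.

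\textbf{Stage 1 (enumeration).} For a fixed $n$, the only edge pairs that can cross in a geometric realization of $C_n$ are pairs $e_i\times e_j$ of non-adjacent edges. I would list every subset of this collection whose cardinality does not exceed the Furry--Kleitman bound of Lemma~\ref{lem:furry}, and then discard each subset that is not closed under the implications of the forbidden-configuration results already established: Lemma~\ref{lem:p5crossingrule} (applicable since $C_n$ contains $P_5$ as a subgraph for $n\ge 5$), Lemma~\ref{lem:p6rule}, Lemma~\ref{lem:insideoutside}, Corollary~\ref{lem:cncrossingrule}, and, when $n=6$, the five parts of Lemma~\ref{lem:c6crossingrules}. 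Finally the surviving subsets are grouped into orbits under $\mathrm{Aut}(C_n)$, the dihedral group of order $2n$, and one representative is kept per orbit. For $n=3$ there are no non-adjacent edge pairs, so only the plane realization survives; for $n=4$ the unique orbit with a crossing is $\{e_1\times e_3\}$ and no two-crossing subset survives; for $n=5$ and $n=6$ this bookkeeping leaves exactly the five, respectively twenty-six, crossing sets listed in the statement.

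\textbf{Stage 2 (realizability).} The lemmas of Stage 1 only exclude configurations, so for each surviving crossing set $S$ I must still exhibit a straight-line drawing of $C_n$ whose crossing set is precisely $S$; verifying such a drawing is a finite inspection of which segments meet. For $3\le n\le 5$, and for $C_6$ with at most two crossings, suitable drawings are immediate; for the remaining $C_6$ cases the drawings are collected in Appendix~\ref{app:cycles}. Once every surviving $S$ has been realized, the list of isomorphism classes is proved exhaustive.

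\textbf{Stage 3 (the order).} To decide $\overline{C}_n\preceq\widehat{C}_n$ I would invoke Proposition~\ref{prop:preceqiff}: since $L(C_n)\cong C_n$ has more than four vertices for $n\ge 5$, the relation holds if and only if some automorphism of $C_n$ carries the crossing set of $\overline{C}_n$ into that of $\widehat{C}_n$ as a subgraph of the edge-crossing graph. Hence each ordered pair of classes requires checking at most $2n$ dihedral images of one crossing set against the other. Before any such bookkeeping, I would prune most incomparable pairs using the numerical invariants of Section~\ref{subsec:tools} --- the crossing number $cr$, the sorted vectors $D_0$ and $M$, and the convex clique number --- via Proposition~\ref{prop:observe} and Corollary~\ref{cor:vector}. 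The transitive reduction of the surviving relation yields the cover relations and hence the Hasse diagram of Figure~\ref{fig:C6_poset}; for $C_5$ the five classes have pairwise distinct crossing numbers $0,1,2,3,5$, so the order is forced to be a chain once each consecutive link is confirmed realizable, and $\mathcal{C}_3,\mathcal{C}_4$ are read off directly (here Proposition~\ref{prop:preceqiff} does not apply, the line graph being too small, but the posets are trivial). I expect the main obstacle to be the completeness of Stage 1 for $n=6$: one must be confident that after applying \emph{all} of the stated crossing-rule lemmas every remaining subset is genuinely realizable, so that no additional forbidden-configuration lemma has been overlooked --- this is exactly the role of Lemma~\ref{lem:c6crossingrules}, and the explicit constructions of Stage 2 are what finally confirm that the twenty-six classes are exhaustive. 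The remaining pairwise comparability analysis of Stage 3 is finite and is kept manageable by first stratifying the classes by crossing number and then by the Section~\ref{subsec:tools} parameters.
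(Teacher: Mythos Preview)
Your proposal is correct and follows essentially the same three-stage approach as the paper: enumerate candidate crossing sets and filter by the stated lemmas, exhibit explicit drawings to certify realizability, and decide comparabilities via Proposition~\ref{prop:preceqiff} by checking the $2n$ dihedral automorphisms. The only minor difference is that you propose to prune incomparabilities using the numerical invariants of Section~\ref{subsec:tools} before the dihedral check, whereas the paper reserves those tools for $\mathcal{K}_n$ and relies directly on the line/crossing graphs for cycles; either route works.
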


\begin{proof}
It is straightforward to find geometric realizations with the given sets of crossings. These realizations, together with their line/crossing graphs (which aid in determining the poset relations) appear in Appendix~\ref{app:cycles}.  It follows from Lemmas~\ref{lem:p5crossingrule}, \ref{lem:p6rule}, \ref{lem:furry}, \ref{lem:insideoutside} and \ref{lem:c6crossingrules} that there are no other realizations.
\end{proof}

\begin{figure}[htbp]
 \centerline{\epsfig{file = 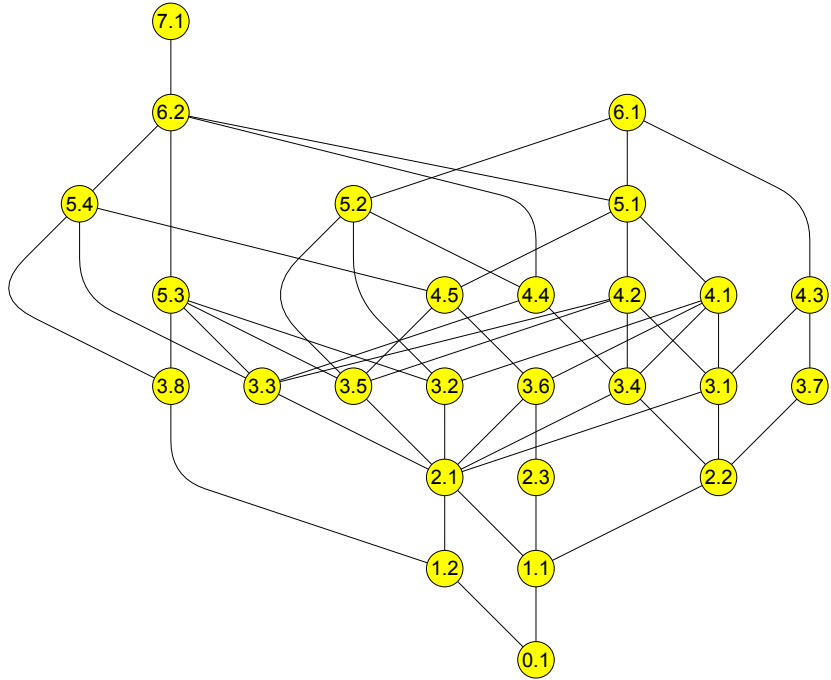,   width = .8\textwidth}}
  \caption{The Hasse diagram for ${\mathcal C}_6$}
  \label{fig:C6_poset}
\end{figure}

 Unlike the case of ${\mathcal P}_n$, we see from the geometric realizations of $C_5$ that not every possible number of crossings up to the maximum is necessarily achieved. On the other hand, $C_6$ has at least one realization with each number of crossings from 0 up to its maximum of 7. Furry and Kleitman have shown that this is representative of the geometric realizations of all odd and even cycles. This is stated in  Theorem \ref{FurryKleitman} below.

\begin{thm}\label{FurryKleitman} \rm \cite{Furry77}
For $n$ even, $n\ge4$, $\overline{C}_n$ can have any number of crossings from 0 up to the maximum of $n(n-4)/2+1$. For $n$ odd, $n\ge3$, $\overline{C}_n$ can have any number of crossings from 0 up to the maximum of $n(n-3)/2$, except there is no geometric realization with  $n(n-3)/2-1$ crossings.
\end{thm}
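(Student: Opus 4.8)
The plan is to split the statement into the part that is already available and the part that is new. The tight upper bounds $n(n-4)/2+1$ (for $n$ even) and $n(n-3)/2$ (for $n$ odd) are exactly Lemma~\ref{lem:furry}, so what remains is: \textit{(a)} every smaller value of $cr$ is attained, with the single exception $n(n-3)/2-1$ when $n$ is odd, and \textit{(b)} that exceptional value is genuinely not attained. I would handle (a) quickly and put essentially all the effort into (b). The posets $\calC_5$ and $\calC_6$ of Theorem~\ref{thm:c1toc6posets} are the base cases: $\calC_5$ has realizations with exactly $0,1,2,3,5$ crossings and none with $4=5(5-3)/2-1$, and $\calC_6$ has one with each of $0,\dots,7$, so (a) and (b) both hold for $n\le 6$.

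For (a), which is the routine direction: a crossing-reduction move---take a crossed edge, slide one of its endpoints along that edge just past its nearest crossing, and re-route the two edges incident to that endpoint---destroys exactly one crossing, exactly as in the path argument behind Theorem~\ref{thm:pn_props}. Combined with an explicit one-parameter family of drawings of $C_n$ (of the sort Furry and Kleitman give), this yields realizations with each of $0,1,\dots$ up to the maximum when $n$ is even, and up to $n(n-3)/2-2$ when $n$ is odd; Lemma~\ref{lem:furry} then supplies the last value $n(n-3)/2$ in the odd case. The only point here that is \emph{not} routine is precisely that the family must stop at $n(n-3)/2-2$ rather than $n(n-3)/2-1$, i.e.\ part (b).

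For (b), the crux, I would first translate the statement. Since $n(n-3)/2=\binom n2-n$ is exactly the number of non-adjacent edge pairs of $C_n$, a realization with $n(n-3)/2-1$ crossings is one in which \emph{exactly one} non-adjacent pair of edges, say $\{e_a,e_b\}$, fails to cross, every other non-adjacent pair crossing. Deleting $e_a$ then leaves a straight-line drawing of $P_n$ (on the same vertices, with the two ends of $e_a$ as its endpoints) in which \emph{every} non-adjacent pair of edges crosses---a straight-line \emph{thrackled} path (symmetrically, deleting $e_b$ does the same). Such a path is rigid: for each of its edges $f$, every vertex off the line through $f$ must alternate sides of that line, because $f$ crosses all $n-3$ edges it is not incident with. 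I would argue that this forces the $n$ vertices into convex position traversed in a star-polygon order, so that its two endpoints form a chord whose reinsertion as $e_a$ completes the star-polygon \emph{cycle}; and when $n$ is odd that cycle is a thrackle, so $e_a$ crosses $e_b$ after all, contradicting the choice of $\{e_a,e_b\}$. (For $n$ even the analogous closing-up does not produce a thrackle, which is why there is no gap in that case.)

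The hard part is exactly the rigidity step in (b): classifying the straight-line thrackled realizations of $P_n$ finely enough to conclude that reinserting the endpoint-chord always completes a thrackled cycle when $n$ is odd---equivalently, that it cannot merely re-create the one deleted crossing without closing a thrackle---and extracting from the parity of $n$ the interleaving of that chord with every non-incident edge. The ``alternating sides'' observation is only the start; promoting it to full convex-position rigidity is the technical heart of the matter and is where the original argument of Furry and Kleitman \cite{Furry77} does its real work. As a fallback I would keep the more computational route: a parity count of the transversal crossings of the closed polygon $C_n$ with the line through $e_a$, together with the forced degrees $cr(e_a)=cr(e_b)=n-4$ and $cr(e)=n-3$ for every other edge $e$. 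Either way, (b) is the step that must be argued carefully; the rest is essentially mechanical.
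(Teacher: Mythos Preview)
The paper does not prove this theorem at all: it is stated purely as a citation to Furry and Kleitman~\cite{Furry77}, with no accompanying argument. So there is no ``paper's own proof'' to compare your proposal against; the authors simply import the result.

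That said, a few remarks on your sketch on its own merits. Your translation of part~(b) is correct and is indeed the heart of the matter: a drawing with $n(n-3)/2-1$ crossings is one in which exactly one non-adjacent pair $\{e_a,e_b\}$ fails to cross, and deleting $e_a$ leaves a thrackled straight-line $P_n$. You are also right that the rigidity of such thrackled paths is where the real work lies, and you explicitly defer that work back to~\cite{Furry77}; so your proposal is an outline rather than a proof. For part~(a), be careful: the slide-a-vertex trick from Theorem~\ref{thm:pn_props} works for paths because the vertex being slid has degree~$1$, so only one edge moves. In a cycle every vertex has degree~$2$, and sliding a vertex $v$ along one incident edge drags the \emph{other} incident edge with it, potentially creating or destroying several crossings at once. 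Your phrase ``re-route the two edges incident to that endpoint'' glosses over this; the one-crossing-at-a-time reduction is not automatic for cycles and is part of what Furry and Kleitman actually establish. In short, your plan identifies the right decomposition and the right crux, but both halves ultimately lean on~\cite{Furry77}---which is exactly what the paper itself does.
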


Finally we mention two more properties that are true in general for any geometric realization of the cycle $C_n$. The first one is obvious, and the second is easy to see, since any realization of $C_n$ can be replaced by one of $C_{n+1}$, by subdividing the edge $e_n$ into two edges, $e_n$ and $e_{n+1}$, so that the new edge $e_{n+1}$ has no crossings.

\begin{thm}\label{thm:cn_props} \rm
For $n \ge 3$, the poset ${\mathcal C}_n$ has the following properties.
\begin{enumerate}
  \item\label{b1} There is a unique minimal element, corresponding to the plane realization of $C_n$.
  \item\label{b2} There is a unique  maximal element, corresponding to the geometric realization with the maximum number of crossings, as given in Theorem~\ref{FurryKleitman}.
  \item\label{b3} For $3 \le k \le n$, the poset ${\mathcal C}_k$ is isomorphic to a sub-poset of ${\mathcal C}_n$.
\end{enumerate}
\end{thm}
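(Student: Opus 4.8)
The three parts of this theorem are closely parallel to the corresponding parts of Theorem \ref{thm:pn_props}, so the plan is to mirror those arguments with the modifications forced by the cyclic structure. For Part~\ref{b1}, I would observe that the plane realization of $C_n$ has $cr(\overline C_n)=0$ and $EX(\overline C_n)$ has no edges; hence by Proposition~\ref{prop:preceqiff} (equivalently, by the trivial homomorphism taking the empty edge-crossing graph into anything) the plane realization satisfies $\overline C_n\preceq\widehat C_n$ for every realization $\widehat C_n$. It is unique because any realization with at least one crossing has $cr>0$, so by Proposition~\ref{prop:observe}\ref{a} it cannot be $\preceq$ the plane realization; thus the plane realization is the only minimal element.

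For Part~\ref{b2}, let $\widehat C_n$ be the realization achieving the maximum number $M$ of crossings guaranteed by Lemma~\ref{lem:furry}. The claim is that every realization $\overline C_n$ satisfies $\overline C_n\preceq\widehat C_n$. The natural route is to adapt the ``slide a vertex over a crossing'' construction from the proof of Theorem~\ref{thm:pn_props}\ref{a3}: starting from $\widehat C_n$ and removing crossings one at a time yields a descending chain of realizations, and I would argue that every realization of $C_n$ appears (up to isomorphism) along some such descent, or more directly that from any $\overline C_n$ one can build an injective geometric homomorphism into $\widehat C_n$. The cleanest formulation is probably: any realization with $c<M$ crossings can be obtained from a realization with $c+1$ crossings by erasing a small neighborhood of a crossing and rerouting, which gives $\overline C_n\prec$ (that larger realization); iterating upward reaches the maximum. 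Uniqueness of the maximal element then follows because any realization other than $\widehat C_n$ has $cr<M\le cr(\widehat C_n)$, so by Proposition~\ref{prop:observe}\ref{a} nothing can sit above it except things with at least as many crossings — and one must check no two distinct realizations both attain $M$ and are incomparable; here Lemma~\ref{lem:furry}'s tightness together with the explicit maximal realization should pin it down.

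For Part~\ref{b3}, I would use exactly the subdivision idea already sketched in the paragraph preceding the theorem: given a geometric realization of $C_k$, subdivide the edge $e_k$ into a path of $n-k+1$ edges drawn in a tiny uncrossed neighborhood of (part of) the original $e_k$, producing a realization of $C_n$ with the same set of crossings (after relabeling). This assignment is injective on isomorphism classes and, crucially, preserves and reflects the relation $\preceq$: by Proposition~\ref{prop:preceqiff}, comparability is governed by automorphisms of $L(C_k)=C_k$ acting on edge-crossing graphs, and the extra uncrossed edges are isolated vertices in the edge-crossing graph that any such map fixes, so $\overline C_k\preceq\widehat C_k$ iff their images are comparable in $\mathcal C_n$. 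Hence $\mathcal C_k$ embeds as a subposet of $\mathcal C_n$.

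The main obstacle is Part~\ref{b2}: Parts~\ref{b1} and~\ref{b3} are essentially bookkeeping, but showing that the maximum-crossing realization is genuinely a top element requires producing, for an \emph{arbitrary} realization $\overline C_n$, an injective geometric homomorphism into the specific extremal realization. The ``remove one crossing'' operation easily produces chains \emph{downward} from the maximum, but proving that every realization is reachable this way (equivalently, that the extremal realization dominates all others) is the delicate point — one likely needs either an explicit description of the Furry--Kleitman extremal drawing showing its edge-crossing graph contains (an automorphic image of) every admissible edge-crossing graph, or a structural argument that inserting a missing crossing never obstructs an existing homomorphism. I would expect to lean on the explicit extremal construction (the ``starburst'' realization, as in Figure~\ref{fig:c10_max}) and verify directly that its $EX$ graph is universal among edge-crossing graphs of realizations of $C_n$.
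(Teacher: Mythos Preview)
Your treatment of Parts~\ref{b1} and~\ref{b3} matches the paper's own (very brief) justification: the paper simply calls Part~\ref{b1} ``obvious'' and for Part~\ref{b3} sketches exactly the edge-subdivision argument you describe. The paper does not supply an argument for Part~\ref{b2} at all, so you are right to flag it as the delicate point.

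However, your proposed route to Part~\ref{b2} has a genuine gap. The ``add one crossing at a time'' idea is stated backwards: the sliding/erasing operation from Theorem~\ref{thm:pn_props} produces, from a realization with $c+1$ crossings, \emph{some} realization with $c$ crossings---it does not show that a \emph{given} $c$-crossing realization is dominated by some $(c{+}1)$-crossing realization, which is what you need to climb upward. And even if the reversal worked, Theorem~\ref{FurryKleitman} itself blocks the chain for odd $n$: there is no realization of $C_n$ with $M-1$ crossings, so the last step cannot be a single crossing. Your alternative strategy---show that $EX(\widehat C_n)$ contains an automorphic image of every admissible edge-crossing graph---is the right direction, and for odd $n$ it is in fact immediate once you observe that $M=n(n-3)/2$ equals the total number of non-adjacent edge pairs in $C_n$; hence the extremal realization has \emph{every} possible crossing, the identity on $L(C_n)$ witnesses $\overline C_n\preceq\widehat C_n$ for every $\overline C_n$, and uniqueness is automatic. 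For even $n$ the extremal realization misses $(n-2)/2$ crossings, so one genuinely needs the explicit structure of the Furry--Kleitman drawing together with crossing constraints such as Corollary~\ref{lem:cncrossingrule} to carry out the verification; this is not a one-line argument, and the paper does not provide it either.
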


Note that $\mathcal{C}_6$ is not a graded poset because it has maximal chains between 0.1 and 6.1 which have different lengths:
$0.1 \prec 1.1 \prec 2.2 \prec 3.7 \prec 4.3 \prec 6.1$ and $0.1 \prec 1.1 \prec 2.2 \prec 3.1 \prec 4.1 \prec 5.1 \prec 6.1$.
Thus by Property \ref{b2}, for all $n\geq 6$, $\mathcal{C}_n$ is not a graded poset. 
Also,  $\mathcal{C}_6$ is not a lattice because, for example, realizations 2.1 and 2.2 do not have a unique supremum. By Property \ref{b2},  $\mathcal{C}_n$ is not a lattice for all $n\geq 6$.

\section{Posets for Geometric Cliques}\label{sect:cliques}

We now determine ${\mathcal K}_n$, the geometric homomorphism poset of the clique ${K}_n$  for $n = 3, \ldots, 6$, and we state some properties of this poset for general $n$. Throughout this section we denote the vertices of $K_n$ by $1, 2, \ldots, n$, and its edges by $e_{ij} = \{i, j\}, i \ne j \in \{1, \ldots, n\}$.  

In \cite{HT}, Harborth and Th\"{u}rmann give all non-isomorphic geometric realizations of $K_n$ for $3\leq n \leq 6$.  Recall that their definition for geometric isomorphism is stricter than the definition being used here.  However, that only means that 
in general, 
our set of non-isomorphic geometric realizations may be smaller than theirs.  That is, two geometric realizations that Harborth and Th\"{u}rmann consider non-isomorphic, we may consider isomorphic.  However, in the cases $K_3, K_4, K_5, K_6$, 
all pairs that are non-isomorphic according to Harborth are also non-isomorphic according to us.

\begin{thm} \label{thm:k1tok6posets} \rm Let ${\mathcal K_n}$ be the poset of geometric realizations of $K_n$.

\begin{enumerate}
  \item ${\mathcal K}_3$  is trivial, containing only the plane realization.
  
  \item ${\mathcal K}_4$ is a chain of two elements, in which the plane realization is the unique minimal element, and the realization with crossing $e_{1,3}\times e_{2,4}$ is the unique maximal element.
  
\item ${\mathcal K}_5$ is  a chain of three elements: $1.1=\{e_{3,5}\times e_{2,4}\}$, $3.1=\{e_{1,4}\times e_{2,5}$, $e_{1,4}\times e_{3,5}$, $e_{2,4}\times e_{3,5}, \}$, and $5.1 = \{e_{1,3}\times e_{2,4}$, $e_{1,3}\times e_{2,5}$, $e_{1,4}\times e_{2,5}$, $e_{1,4}\times e_{3,5}$, $e_{2,4}\times e_{3,5}\}$.

 \item ${\mathcal K}_6$ has Hasse diagram as given in Figure~\ref{fig:HasseK6poset} with fifteen  non-isomorphic geometric realizations:
 
 {\em 3 crossings:}   $3.1 = \{e_{1, 3}\times e_{2, 6},  e_{1, 4}\times e_{2, 5},  e_{3, 5}\times e_{4, 6}\}$; 
 
 {\em 4 crossings:}  $4.1 = \{e_{1, 3}\times e_{2, 6},  e_{1, 4}\times e_{3, 5},  e_{1, 4}\times e_{5, 6},  e_{3, 5}\times e_{4, 6}\}$; 

 {\em 5 crossings:}  
 $5.1 = \{e_{1, 3}\times e_{2,4},  e_{1, 3}\times e_{2, 6},  e_{1, 4}\times e_{2, 6},  e_{1, 4}\times e_{3, 6},  e_{2, 4}\times e_{3, 6}\}$,  
$5.2 =\{e_{1, 3}\times e_{2, 6},  e_{1, 4}\times e_{2, 6},  e_{1, 4}\times e_{3,5},   e_{1, 4}\times e_{3,6},  e_{3, 5} \times e_{4, 6}  \}$; 

 {\em 6 crossings:} 
 $6.1 = \{e_{1, 4}\times e_{2, 5},  e_{1, 4}\times e_{2, 6},  e_{1, 4}\times e_{3, 6},  e_{2, 4}\times e_{3, 6},  e_{2, 5}\times e_{3, 6},  e_{2, 5}\times e_{4, 6}   \}$; 
 
 {\em 7 crossings:} 
 $7.1 = \{e_{1, 3}\times e_{2, 5},  e_{1, 3}\times e_{2, 6},  e_{1, 4}\times e_{2, 5},  e_{1, 4}\times e_{3, 5},    e_{1, 4}\times e_{5, 6},  e_{1, 6}\times e_{2, 5}, e_{3, 5}\times e_{4, 6} \}$, 
 $7.2 =\{e_{1, 3} \times e_{2, 6},  e_{1, 4} \times e_{2, 5},  e_{1, 4} \times e_{3, 5},  e_{1, 5} \times e_{4, 6},  e_{2, 4} \times e_{3, 5},  e_{2, 5} \times e_{4, 6},  e_{3, 5} \times e_{4, 6} \}$; 
 
 {\em 8 crossings:} 
$8.1 = \{e_{1, 3}\times e_{2, 4},  e_{1, 3}\times e_{2, 6},  e_{1, 4}\times e_{3, 5},  e_{1, 4} \times e_{5, 6},  e_{1, 6}\times e_{2,4},  e_{2, 4} \times e_{3, 5},  e_{2, 4} \times e_{5, 6},  e_{3, 5} \times e_{4, 6}\}$, 
$8.2 = \{e_{1, 3} \times e_{2,4},  e_{1, 3} \times e_{2, 6},  e_{1, 4} \times e_{2, 6},  e_{1, 4} \times e_{3, 6},  e_{1, 5} \times e_{2, 6},  e_{1, 5} \times e_{3,6},  e_{1, 5} \times e_{4, 6},  e_{2, 4} \times e_{3, 6}\}$; 
 
 {\em 9 crossings:} 
 $9.1=\{e_{1, 3} \times e_{2, 5},   e_{1, 3} \times e_{2, 6},  e_{1, 4} \times e_{2, 5},  e_{1, 4} \times e_{2, 6},  e_{1, 4} \times e_{3, 5},  e_{1, 4} \times e_{3, 6},  e_{2, 5} \times e_{3, 6},  e_{2, 5} \times e_{4, 6},  e_{3, 5} \times e_{4, 6}  \}$, 
 $9.2 = \{e_{1, 3} \times e_{2, 4},  e_{1, 3} \times e_{2, 5},  e_{1, 3} \times e_{2, 6},  e_{1, 4} \times e_{2, 5},  e_{1, 4} \times e_{2, 6}, e_{1, 4} \times  e_{3, 6},  e_{2, 4} \times e_{3, 6},  e_{2, 5} \times e_{3, 6},  e_{2, 5} \times e_{4, 6}\}$; 
 
 {\em 10 crossings:} 
 $10.1 = \{e_{1, 3} \times e_{2, 4},  e_{1, 3} \times e_{2, 5},  e_{1, 3} \times e_{2, 6},  e_{1, 4} \times e_{2, 5},  e_{1, 4} \times e_{3, 5},  e_{1, 4} \times e_{5, 6},  e_{1, 6} \times e_{2, 5},  e_{2, 4} \times e_{3, 5},  e_{2, 4} \times e_{3, 6},  e_{3, 5} \times e_{4, 6}\}$; 
  
 {\em 11 crossings:} 
 $11.1 =\{e_{1, 3} \times e_{2,4},  e_{1, 3} \times e_{2,5},  e_{1, 3}  \times e_{2, 6},  e_{1, 4} \times e_{2, 5},  e_{1, 4} \times e_{3, 5},  e_{1, 4} \times e_{5, 6}, e_{1, 6} \times e_{2, 4},  e_{1, 6} \times e_{2, 5},    e_{2, 4}  \times e_{3, 5},  e_{2, 4} \times e_{5, 6},  e_{3, 5} \times e_{4, 6}  \}$; 
 
 {\em 12 crossings:} 
 $12.1 =\{e_{1, 3} \times e_{2, 4},  e_{1, 3} \times e_{2, 5},  e_{1, 3} \times e_{2,6},  e_{1, 4} \times e_{2, 5},  e_{1, 4} \times e_{2, 6},  e_{1, 4} \times e_{3, 5},  e_{1, 4} \times e_{3, 6},  e_{2, 4} \times e_{3, 5},  e_{2, 4} \times e_{3, 6},  e_{2, 5} \times e_{3, 6},  e_{2, 5} \times e_{4, 6},  e_{3, 5} \times e_{4, 6}\}$; 
 
 {\em 15 crossings:} 
 $15.1 =\{e_{1, 3} \times e_{2, 4},  e_{1, 3} \times e_{2, 5},  e_{1, 3} \times e_{2, 6},  e_{1, 4} \times e_{2, 5},  e_{1, 4} \times e_{2, 6},  e_{1, 4} \times e_{3, 5},  e_{1, 4} \times e_{3, 6},  e_{1, 5} \times e_{2, 6},  e_{1, 5} \times e_{3, 6},  e_{1, 5} \times e_{4, 6},  e_{2, 4} \times e_{3, 5},  e_{2, 4} \times e_{3, 6},  e_{2, 5} \times e_{3, 6},  e_{2, 5} \times e_{4, 6},  e_{3, 5}  \times e_{4, 6}  \}$.
\end{enumerate}  
\end{thm}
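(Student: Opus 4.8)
The plan is to split the statement into an enumeration part and an order-relation part, to dispatch $\mathcal{K}_3$ and $\mathcal{K}_4$ directly, to settle $\mathcal{K}_5$ in a few lines, and to devote the bulk of the work to $\mathcal{K}_6$. For the list of geometric realizations of $K_n$ with $3\le n\le 6$ I would invoke the classification of Harborth and Th\"{u}rmann in \cite{HT}. Since their notion of geometric isomorphism is strictly finer than ours, their list a priori only refines the one we want, so what must be checked is that it does not: any two realizations they distinguish, for these four values of $n$, are already distinguished by crossing data alone. For $n=3$ there is only the plane realization; $K_4$ yields the plane realization and the realization with the single crossing $e_{1,3}\times e_{2,4}$; and for $n=5,6$, since $K_5$ and $K_6$ are non-planar, every realization has at least $1$, respectively $3$, crossings, the listed crossing sets exhaust the possibilities, and pairwise non-isomorphism within a fixed crossing-number level---the only place it is genuinely in doubt, e.g.\ $5.1$ versus $5.2$, $7.1$ versus $7.2$, $8.1$ versus $8.2$, $9.1$ versus $9.2$ in $\mathcal{K}_6$---is certified by an invariant of Section~\ref{sect:basics}: the vectors $D_0,M$ of Corollary~\ref{cor:vector}, the convex clique number $\widehat\omega$, the multiset $\{cr(e)\}$, or the isomorphism type of $EX$.

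For the order relation I would use the characterization of Proposition~\ref{prop:preceqiff}: since $K_n$ has more than four vertices, Theorem~\ref{thm:linegraph} identifies $\aut(L(K_n))$ with $\aut(K_n)=S_n$, so $\overline{K_n}\preceq\widehat{K_n}$ holds \emph{if and only if} some permutation of $\{1,\dots,n\}$ carries the crossing set of $\overline{K_n}$ into that of $\widehat{K_n}$. This reduces every comparability question to a finite search over $S_n$, with each positive relation recorded as one explicit permutation. To prune that search one uses the monotone parameters of Section~\ref{sect:basics}: $cr$ is nondecreasing along $\preceq$ (Proposition~\ref{prop:observe}), which already forbids any relation from a higher crossing level to a lower one and fixes the coarse vertical shape of each Hasse diagram, and for the remaining candidates the finer invariants $|E_0|$, $\widehat\omega$, $D_0$, $M$ (Proposition~\ref{prop:observe}, Corollary~\ref{cor:vector}) and the isomorphism type of $EX$ eliminate most pairs outright.

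For $K_5$ this is quick: the three crossing sets are, up to relabeling, nested, so since $1<3<5$ the inclusions are strict and $\mathcal{K}_5$ is the chain $1.1\prec 3.1\prec 5.1$, with $1.1$ the bottom element---it realizes the rectilinear crossing number of $K_5$---and the convex realization $5.1$ on top. The substance of the theorem is $\mathcal{K}_6$: for each ordered pair among the fifteen realizations that survives the $cr$-, $\widehat\omega$-, $D_0$-, $M$- and $EX$-filters, one must decide whether some element of $S_6$ maps one crossing set inside the other, record the positive cases as explicit relabelings, and transitively reduce them to obtain the covers drawn in Figure~\ref{fig:HasseK6poset}; here the minimum $3.1$ realizes the rectilinear crossing number of $K_6$ and the maximum $15.1$ is the convex realization carrying all $\binom{6}{4}$ crossings.

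The hard part is precisely this last, essentially finite but sizeable, case analysis for $K_6$: there is no structural shortcut, and correctness hinges on overlooking neither an admissible permutation nor an admissible realization. Two further points need care. First, completeness of the realization list rests on \cite{HT}, which is only legitimate after the coarsening check above and the verification that the fifteen $K_6$-realizations remain pairwise non-isomorphic under our coarser equivalence. Second, as already happens for $\mathcal{P}_6$ and $\mathcal{C}_6$, the crossing number is not a rank function on $\mathcal{K}_6$---witness the jump from $12.1$ to $15.1$---so the Hasse diagram cannot be organized level by level, and each claimed cover and each claimed incomparability must be verified individually.
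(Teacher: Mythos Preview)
Your overall strategy matches the paper's: invoke \cite{HT} for the enumeration (after the coarsening check you describe), exhibit explicit permutations for each positive relation, and rule out the remaining pairs via the monotone invariants of Section~\ref{subsec:tools}. One small difference in emphasis: the paper explicitly declines to route through Proposition~\ref{prop:preceqiff}, noting that since $\aut(K_n)=S_n$ the LEX characterization yields no reduction over a brute search; you invoke it anyway, but since you immediately observe that it collapses to searching $S_n$, this is cosmetic.

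There is, however, a factual error in your description of the outcome. You call $3.1$ ``the minimum'' and $15.1$ ``the maximum'' of $\mathcal{K}_6$, but $\mathcal{K}_6$ has neither: it has five minimal elements ($3.1$, $4.1$, $5.1$, $5.2$, $6.1$) and three maximal ones ($10.1$, $11.1$, $15.1$). For instance $3.1\not\prec 4.1$ (witnessed by $D_0$) and $3.1\not\prec 5.1$ (witnessed by $|E_0|$), so the realization with the fewest crossings does not sit below everything else. This does not break your method, but it shows you must actually carry out the case analysis rather than assume the shape of the diagram.

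One further point the paper makes explicit and your sketch glosses over: the standard invariants you list do not dispose of every pair. The nonprecedence $5.2\not\prec 7.2$ requires a bespoke argument combining the structure of the uncrossed subgraphs (Proposition~\ref{prop:observe}(\ref{d})) with edge-by-edge crossing counts (Proposition~\ref{prop:observe}(\ref{b})); no single entry in your invariant list suffices. Your plan should allow for such ad hoc steps.
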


\begin{proof}  
Since ${\rm Aut}(K_n)$ contains all possible permutations of the vertices, it does not make our job easier to first restrict our search for homomorphisms to those that are induced by automorphisms of the underlying abstract graph.  Thus we use the tools of Subsection~\ref{subsec:tools} rather than those of Subsection~\ref{subsec:LEX}. Drawings of the realizations of $K_5$ and $K_6$, as well justifications of the poset relations and non-relations, appear in Appendix \ref{app:cliques}.
\end{proof}

\begin{figure}[htbp]
 \centerline{\epsfig{file =  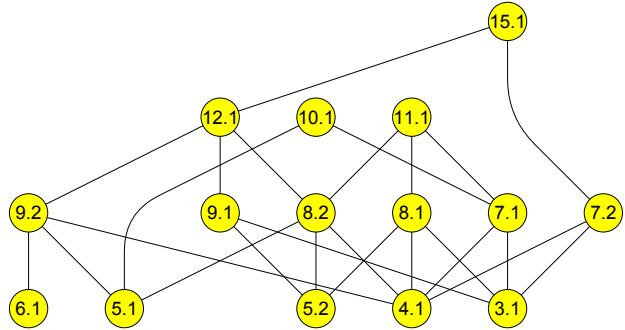,   width = .6\textwidth}}
   \caption{The Hasse diagram for ${\mathcal K}_6$}
   \label{fig:HasseK6poset}
\end{figure}

Observe that $\mathcal{K}_6$ has five minimal elements and three maximal ones. As with cycles, not every possible number of crossings, from 3 up to the maximum of 15, is achieved; there are no realizations of $K_6$ containing 13 crossings or 14 crossings. Clearly, the number of edge crossings cannot act as a rank function. In fact, $\mathcal{K}_6$ is not a graded poset because it has maximal chains between 3.1 and 15.1 of different lengths: 
$3.1 \prec 7.2 \prec 15.1$ and $3.1 \prec 9.1 \prec 12.1 \prec 15.1$. 
Moreover, $\mathcal{K}_6$ is not a lattice, because realizations 3.1 and 4.1 do not have  a unique supremum.

Although $\mathcal{K}_6$ has no rank function, the function taking a realization to the number of vertices in the boundary of  its convex hull is order-preserving.  In Figure~\ref{fig:HasseK6poset}, all realizations displayed on the bottom level of the Hasse diagram have 3 vertices in the boundary of the convex hull, those on the second level have 4, those on the third level have 5 and realization 15.1 has 6.

\begin{thm}\label{prop:chains} \rm
For all $n \geq 3$,  $\mathcal{K}_n$ contains a maximal chain of length $n-2$.
More precisely,  $\mathcal{K}_n$ contains a  chain of the form
$$\overline H_3 \prec \overline H_4 \prec \cdots \prec \overline H_n$$
where $\overline{H}_k$ denotes a geometric realization of $K_n$ with $k$ vertices on the boundary of its convex hull.
\end{thm}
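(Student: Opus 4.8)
The plan is to produce the chain by hand: start from the convex‑position realization (which will be $\overline H_n$) and repeatedly slide a single hull vertex a tiny distance into the interior, so that each step lowers the number of boundary vertices by exactly one and only destroys crossings, never creates them. Comparability $\overline H_k\preceq\overline H_{k+1}$ then holds via the identity labelling, and maximality is checked against the monotone parameters of Section~\ref{subsec:tools}.

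\textbf{Construction of the $\overline H_k$.} Let $\overline H_n$ be the realization of $K_n$ with all $n$ vertices in convex position; a straightforward count gives $cr(\overline H_n)=\binom n4$, the maximum over all realizations of $K_n$, and $\widehat\omega(\overline H_n)=n$. Fix three vertices, say $1,2,3$, as ``anchors'' that are never moved. Given $\overline H_{k+1}$ (with exactly $k+1$ boundary vertices, the anchors among them), form $\overline H_k$ by translating one non‑anchor boundary vertex $q$ by a sufficiently small vector pointed into the hull, chosen so that $q$ ends up inside triangle $123$. Then $q$ is no longer extreme, no other vertex changes status, and $\overline H_k$ has exactly $k$ boundary vertices. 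Iterating for $k=n-1,\dots,3$ yields realizations $\overline H_{n-1},\overline H_{n-2},\dots,\overline H_3$ (together with $\overline H_n$) with the stated hull sizes. To also control convex cliques, take the non‑anchor interior vertices to lie, in every $\overline H_k$, on a short circular arc whose chord separates them from all boundary vertices; this makes no $k+1$ of the $n$ points convexly positioned, so $\widehat\omega(\overline H_k)=k$.

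\textbf{Comparability.} The move from $\overline H_{k+1}$ to $\overline H_k$ destroys crossings but creates none: edges avoiding $q$ are untouched, and an edge $qx$ is displaced by less than the minimum distance between disjoint edges of $\overline H_{k+1}$ (a positive number, by general position and finiteness), so $qx$ can only lose crossings --- concretely, it stops crossing the one hull edge $e^\ast$ across which $q$ was pushed. Hence the crossing set of $\overline H_k$ is contained in that of $\overline H_{k+1}$ under the identity labelling, so the identity map is a vertex‑injective geometric homomorphism inducing an automorphism of $K_n$, i.e.\ $\overline H_k\preceq\overline H_{k+1}$. Since $\widehat\omega(\overline H_k)=k<k+1=\widehat\omega(\overline H_{k+1})$, Proposition~\ref{prop:observe}(\ref{e}) gives $\overline H_{k+1}\not\preceq\overline H_k$ and $\overline H_k\not\cong\overline H_{k+1}$; thus $\overline H_3\prec\overline H_4\prec\cdots\prec\overline H_n$ is a chain of $n-2$ elements.

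\textbf{Maximality, and the main obstacle.} The top is safe: if $\overline H_n\preceq\overline G$ then $cr(\overline G)\ge\binom n4$ forces equality and hence $\widehat\omega(\overline G)=n$, so $\overline G$ is also all‑in‑convex‑position and $\overline G\cong\overline H_n$; thus $\overline H_n$ is the unique maximum of $\mathcal K_n$. For the rest one leans on $\widehat\omega(\overline H_k)=k$: any $\overline G$ with $\overline H_k\preceq\overline G\preceq\overline H_{k+1}$ has $\widehat\omega(\overline G)\in\{k,k+1\}$ by Proposition~\ref{prop:observe}(\ref{e}), while the bottom element $\overline H_3$ has both the smallest convex clique number and the fewest crossings among our realizations. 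Turning these constraints into ``$\overline G$ equals $\overline H_k$ or $\overline H_{k+1}$'' is the crux, and I expect it to be the hardest part: the Section~\ref{subsec:tools} invariants only pin the crossing count of an in‑between $\overline G$ to a window of width $n-3$, so one must additionally exploit the very rigid ``convex $k$‑gon with a flat interior cap'' shape of $\overline H_k$ --- tracking which convex $k$‑clique and which crossings of $\overline H_k$ a homomorphism must carry into $\overline G$, and vice versa --- to conclude it is the unique realization sitting between consecutive chain elements. A minor but real bookkeeping point is the anchor‑triangle device above, needed so that successive perturbations keep the previously‑pushed vertices interior and the hull sizes decrease by exactly one.
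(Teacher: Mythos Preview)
Your approach reverses the paper's: you build the chain \emph{downward} from the convex realization by pushing hull vertices inside, whereas the paper builds \emph{upward} from a carefully designed $\overline H_3$ (one apex vertex together with $n-1$ vertices on a circular arc) by sliding arc vertices out to the lower arc one at a time. Either direction could in principle work, but your construction conflates two incompatible moves. You write that $q$ is translated by a ``sufficiently small vector'' so that each edge $qx$ is displaced by less than the minimum inter-edge distance, yet also that $q$ ``ends up inside triangle $123$'' and that afterward the interior vertices lie on a prescribed short arc. The small-perturbation reading (push $q$ just across the chord joining its two hull neighbours) does make your crossing-monotonicity argument go through, but then the pushed vertices accumulate near the boundary and you have no control over $\widehat\omega(\overline H_k)$. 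The large-move reading gives you the arc placement but wrecks the crossing argument: a macroscopic translation of $q$ can rotate the lines $qx$ past other vertices and create new crossings, so ``can only lose crossings'' is unjustified.

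The more serious gap concerns maximality. The paper proves only that the chain cannot be extended at either end: $\overline H_n$ is the unique crossing-maximum realization, and $\overline H_3$ has no predecessor. You handle the top correctly but never establish that your $\overline H_3$ is minimal in $\mathcal K_n$; saying it ``has the smallest convex clique number and the fewest crossings among our realizations'' only compares it with the other $\overline H_k$, not with arbitrary realizations. The paper's argument here depends essentially on the specific shape of their $\overline H_3$: the apex vertex has \emph{all} incident edges uncrossed, so by Proposition~\ref{prop:obs2} any $\overline H\preceq\overline H_3$ must also have a vertex $v$ with that property; this forces every four of the remaining $n-1$ vertices of $\overline H$ to be in convex position (otherwise one lies inside the triangle of three others and the edge from $v$ to it would be crossed), whence $\overline H\cong\overline H_3$. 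Your $\overline H_3$ need not have any all-uncrossed vertex, so this argument is unavailable to you. Meanwhile, the ``in-between'' refinement you flag as the crux is not something the paper attempts---indeed, whether strict comparability forces a strict increase in hull size is posed as an open question in Section~\ref{sect:questions}.
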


\begin{proof}
We start with a template; consider the circle $x^2 + (y+1)^2 = 4$ in the $xy$ plane, together with the two tangent lines at $(- \sqrt{3}, 0)$ and $(\sqrt{3}, 0)$ that intersect at $(0, 3)$.  Place $n-1$ vertices along the upper portion of the circle, starting at $(- \sqrt{3}, 0)$ and ending at $( \sqrt{3}, 0)$; they should be roughly evenly spaced, but in general position. 
Label the leftmost one $n$, and the remaining ones $2, 3, \dots, n-1$ from left to right.  Add another vertex at $(0, 3)$ and label it $1$. To complete the template, for each $k \in \{2, 3, \dots, n-2\}$, add a ray from vertex $1$ through vertex $k$ and mark where it intersects the lower portion of the circle with $(n-k+1)^*$. See Figure \ref{fig:chains}.

\begin{figure}[htbp]
 \centerline{\epsfig{file =  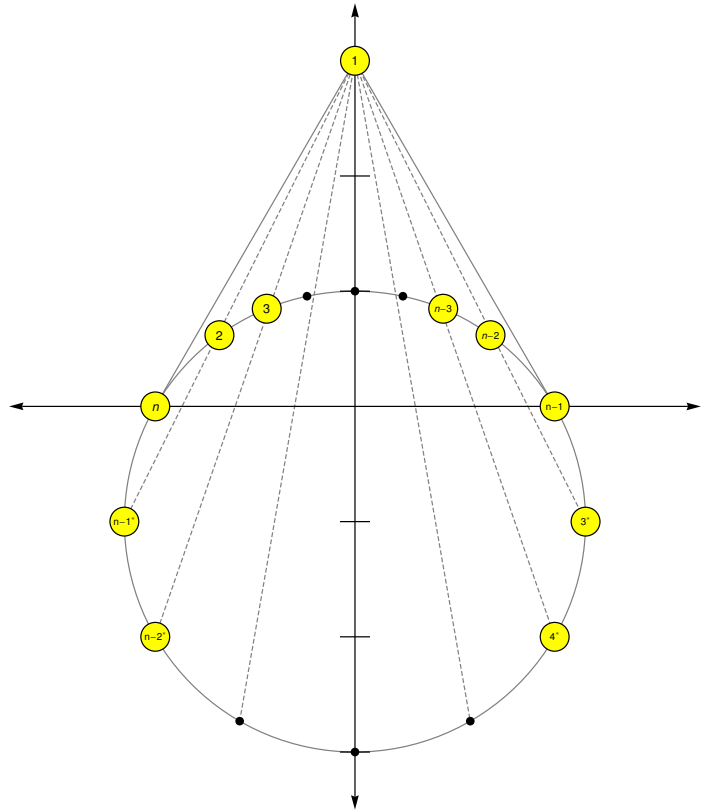,   width = .7\textwidth}}
   \caption{Template for the proof of Theorem \ref{prop:chains}}
   \label{fig:chains}
\end{figure} 

Joining all pairs of vertices in the template with an edge gives us $\overline{H}_3$.  Note that the boundary of its convex hull consists of vertices $1, n$ and $n-1$ and that all crossings in $\overline{H}_3$ occur in the geometric subgraph induced by the vertices $2, 3, \dots, n$.   To get $\overline{H}_4$, slide vertex $n-1$ clockwise along the circle to position $(n-1)^*$.  Then slide vertices $2$ through $n-2$ clockwise `one spot' along the upper portion of the circle.   This is now a geometric realization of $K_n$ in which the boundary of the convex hull consists of the four vertices $1, n-2, n-1$ and $n$.  Since vertices $2, 3, \dots, n$ are still in convex position, no crossings of $\overline{H}_3 $ have been lost.  However the edge $\{1, n-1\}$ now crosses edges $\{2, n\}, \{3, n\}, \dots, \{n-2, n\}$.   Thus 
$\overline{H}_3  \prec \overline{H}_4$.  To get $\overline{H}_5$, move vertex $n-2$ to position $(n-2)^*$ and shift vertices $2$ through $n-3$ clockwise another  `one spot' along the circle.  This gives a geometric realization in which the boundary of the convex hull consists of the vertices  $1, n-3, n-2, n-1$ and $n$.  Again, no edge crossings have been lost, but edge $\{1, n-2\}$ now crosses $\{2, n\}, \{3, n\}, \dots, \{n-3, n\}$. Iterating this process yields the chain described in the theorem.

To prove the maximality of this chain, note that its final element $\overline{H}_n$ has 
all $n$ vertices in convex position and so has
the maximum number of crossings of any realization of $K_n$ and therefore has no successor in  $\mathcal{K}_n$.   
Next, suppose that $f: \overline{H}\to  \overline{H}_3$ is  
an injective geometric homomorphism.
By  Proposition~\ref{prop:obs2},
 all edges incident to vertex $v = f^{-1}(1)$ must be uncrossed.  Let $w, x, y, z$ be any other four vertices in $\overline{H}$; if they are not in convex position, then one of them, say $w$,  must lie in the interior of the convex hull of the other three.  This would imply that the edge $\{v, w\}$ is crossed in $\overline{H}$, a contradiction. Hence all $n-1$ other vertices in $\overline{H}$ lie in convex position, implying that in fact $\overline{H} \cong \overline{H}_3$.
\end{proof}

Each of the posets $\mathcal{K}_4$ and $\mathcal{K}_5$ is precisely the chain given in Theorem \ref{prop:chains}.  Within $\mathcal{K}_6$, the chain constructed in Theorem \ref{prop:chains}  is 
$5.1 \prec 9.2 \prec 12.1 \prec 15.1$.

\section{Open Questions}\label{sect:questions}

\begin{enumerate}
\item Are there (closed or recursive) formulas for the number of elements in $\mathcal{P}_n$ or $\mathcal{C}_n$?
\item  For $3 \leq k \leq n$, is $\mathcal{K}_k$ a sub-poset of $\mathcal{K}_n$?
\item If $\overline{K}_n \prec \widehat{K}_n$,  
must the number of vertices in the convex hull of $\overline{K}_n$ be strictly less than that of $\widehat{K}_n$? If so, then the chain constructed in Theorem \ref{prop:chains} is a maxi\emph{mum} chain.
\item  We saw that $\mathcal{K}_6$ has a maximal chain of length 2.  What is the length of a smallest possible maximal chain in $\mathcal{K}_n$?
\item What is the geometric homomorphism poset for other common families of  graphs?  In \cite{Cockburn2010}, Cockburn has determined the geometric homomorphism poset $\mathcal{K}_{2,n}$ for one family of complete bipartite graphs.
\end{enumerate}

\begin{appendices}
\cleardoublepage
\appendixpage
\section{Geometric Realizations and Posets for the paths $P_5$ and $P_6$}\label{app:paths}

In this appendix we prove that $P_5$ and $P_6$ have the geometric realizations claimed in Theorems~\ref{thm:p1top5posets} and \ref{thm:p6geoms},  and we provide evidence to easily check the correctness of the Hasse diagrams for ${\mathcal P}_5$ and ${\mathcal P}_6$ given in Figures~\ref{fig:P5_poset} and \ref{fig:P6_poset},  respectively.

Table~\ref{tab:p5} gives the four  non-equivalent,  non-plane,  geometric realizations of $P_5$,  using the labels given in Theorem~\ref{thm:p1top5posets};  it follows from Lemma~\ref{lem:p5crossingrule}  that there are no other realizations. It is easy to see from Table~\ref{tab:p5} that the identity map is a geometric homomorphism from each realization to each other that has more crossings,  justifying the Hasse diagram shown in Figure~\ref{fig:P5_poset}.


\begin{table}
\begin{center}
\begin{tabular}{c}
\centerline{\includegraphics[width=\textwidth,keepaspectratio]{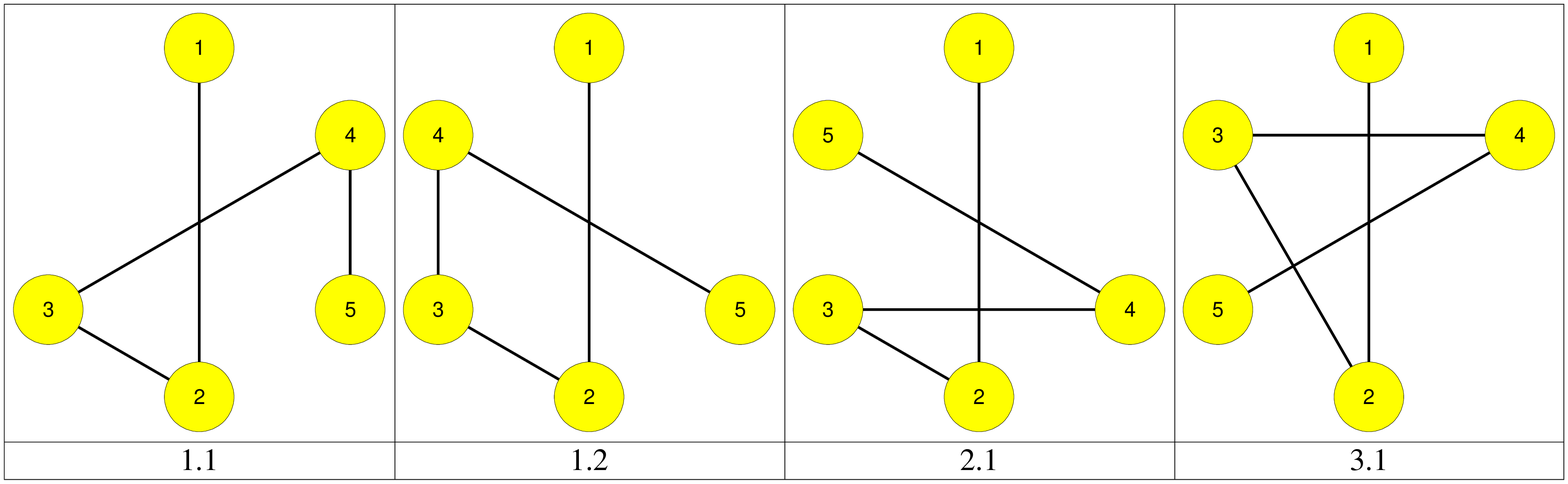}}
\end{tabular}
  \caption{The four non-plane geometric realizations of $P_5$}
  \label{tab:p5}
\end{center}
\end{table}


Table~\ref{tab:p6}  shows the thirty  non-plane geometric realizations of $P_6$,  using the labels given in Theorem~\ref{thm:p6geoms};  it follows from Lemmas~\ref{lem:p5crossingrule}  and \ref{lem:p6rule} that there are no others. 

Table~\ref{tab:p6x} shows the line/crossing graphs corresponding to  each realization in Table~\ref{tab:p6}.
The vertices,  $e_i = \{i,  i+1\},  i = 1,  \ldots,  5$,  run counterclockwise from the top left,  and each vertex is labeled with the number of times the edge in the corresponding geometric realization of $P_6$ is crossed. The red,  dashed edges belong to the line graph,  and the black,  solid edges belong to the crossing graph (so the vertex labels are the degrees in the crossing graph). 

As indicated in Proposition~\ref{prop:preceqiff},  given two geometric realizations  of a graph $G$,  $\G$ and $\hat G$,  the line/crossing graphs are useful tools to determine if $\G$ and $\hat G$ are equivalent,  or if $\G \prec \hat G$. These tools are particularly useful when $G$ is a path $P_n$,  because there are only two isomorphisms to check: the identity map $I_n$ and the map $R_n$ that reverses the order of the vertices. For $P_6$,  we need only observe whether one of these two maps induces a color-preserving injection from 
the line/crossing graph of  one realization $\P_6$ into the line/crossing graph of another realization $\widehat{P}_6$ to determine whether $\P_6 \preceq \widehat{P}_6$. For example,  in Table~\ref{tab:p6x},  compare the line/crossing graph labeled 2.3 with the ones labeled 3.2,  3.3,  and 3.4. It is easy to see that neither the identity nor the reversal map induces a color-preserving injection of 2.3 into 3.2,  but the identity map is a color-preserving injection of 2.3 into 3.3,  and the reversal map is a color-preserving injection of 2.3 into 3.4. Thus $2.3 \not\prec 3.2$,  but $2.3 \prec 3.3$ and $2.3 \prec 3.4$. In this way Table~\ref{tab:p6x} can be used to verify the correctness of  the Hasse diagram for ${\mathcal P}_6$ shown in Figure~\ref{fig:P6_poset}.


\begin{table}
\begin{center}
\begin{tabular}{c}
\centerline{\includegraphics[width=.95\textwidth,keepaspectratio]{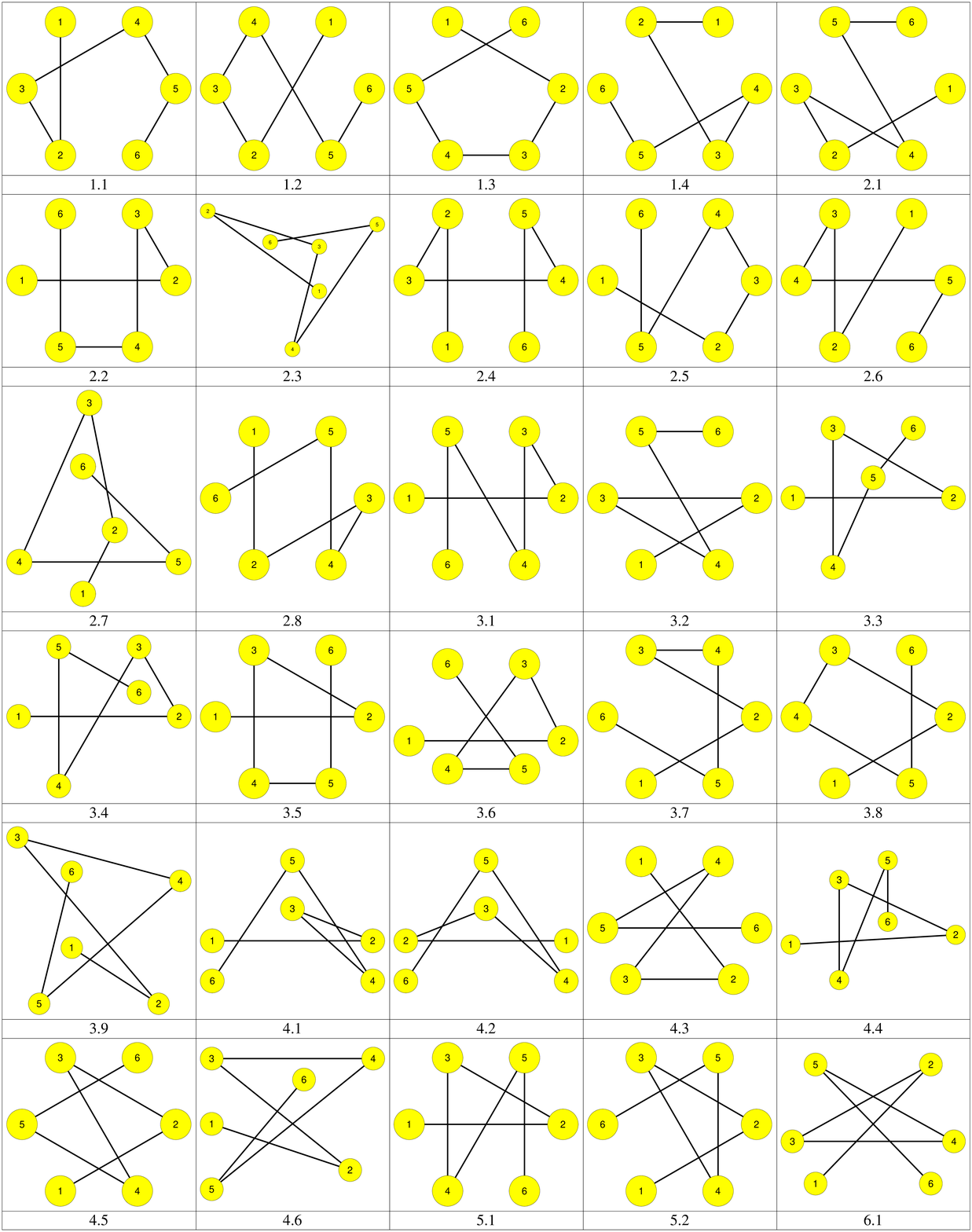}}\end{tabular}
  \caption{The thirty non-plane geometric realizations of $P_6$}
  \label{tab:p6}
\end{center}
\end{table}

\begin{table}
\begin{center}
\begin{tabular}{c}
\centerline{\includegraphics[width=0.98\textwidth,keepaspectratio]{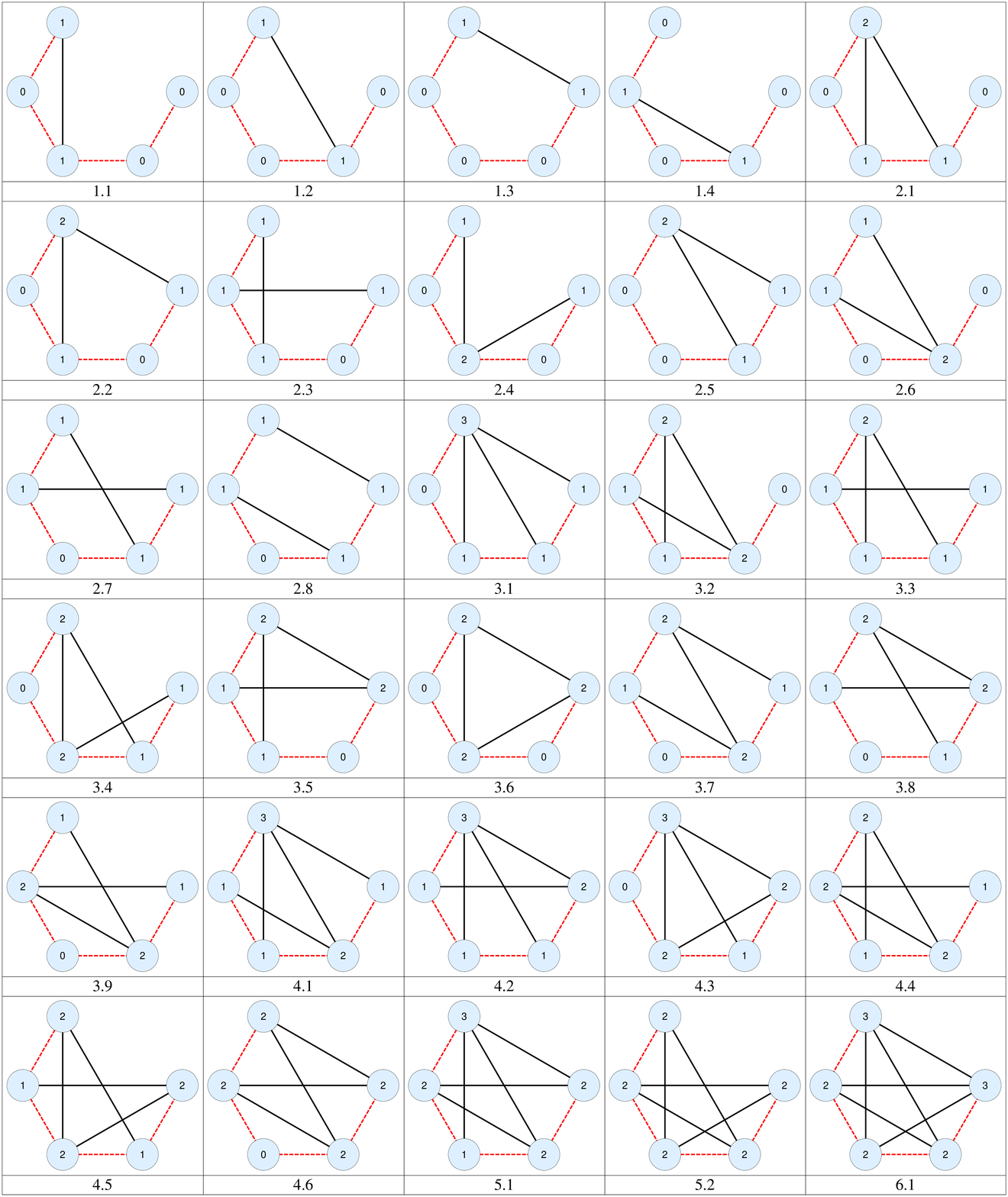}}
\end{tabular}
     \caption{The line/crossing graphs of the geometric realizations  in Table~\ref{tab:p6}}
  \label{tab:p6x}

\end{center}
\end{table}

\section{Geometric Realizations and Posets for the cycles $C_5$ and $C_6$}\label{app:cycles}

In this appendix we prove that $C_5$ and $C_6$ have the geometric realizations claimed in Theorem~\ref{thm:c1toc6posets},  and we provide evidence to easily check the correctness of the Hasse diagram for ${\mathcal C}_6$ given in Figure~\ref{fig:C6_poset}.

Table~\ref{tab:c5}  lists the four  non-equivalent,  non-plane,  geometric realizations of $C_5$,  using the labels given in Theorem~\ref{thm:c1toc6posets};  it follows from Lemma~\ref{lem:p5crossingrule} that there are no other realizations. It is easy to see from Table~\ref{tab:c5} that the identity map is a geometric homomorphism from each realization to each other that has more crossings,  making the Hasse diagram ${\mathcal C}_5$ a chain of four elements.



\begin{table}
\begin{center}
\begin{tabular}{c}
\centerline{\includegraphics[width=0.75\textwidth,keepaspectratio]{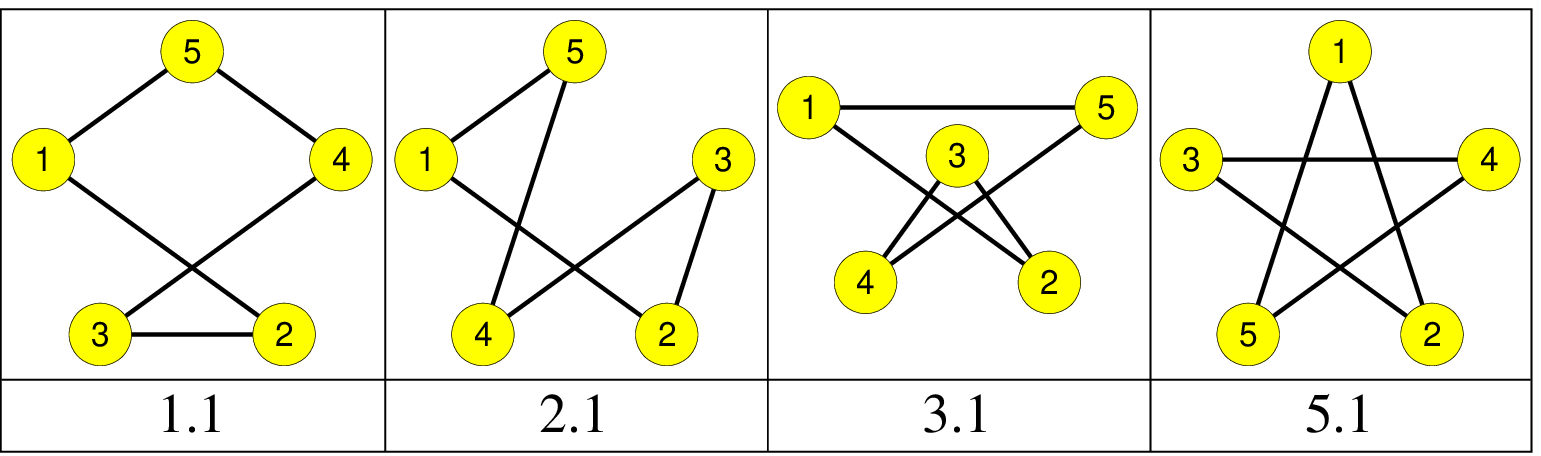}}
\end{tabular}
  \caption{The four non-plane geometric realizations of $C_5$}
  \label{tab:c5}
\end{center}
\end{table}


The structure of the poset $\mathcal{C}_6$ is based in part on Lemma~\ref{lem:c6crossingrules}.  The proof of claim~\ref{cl1} appears in Section~\ref{sect:cycles}; for completeness, we include the proofs of the other claims below. 
\medskip

\noindent {\bf Lemma~\ref{lem:c6crossingrules}.}
Let $\overline{C}_6$ be a geometric realization of the cycle $C_6$,  with edges labeled consecutively,  $e_1,  e_2,  \ldots,  e_6$.
\begin{enumerate}
  \item If $\overline{C}_6$ contains the crossings $e_1\times e_3,  e_1\times e_4$,  and $e_1\times e_5$,  then it doesn't contain the crossing $e_2\times e_6$.
  \item If $\overline{C}_6$ contains the crossings $e_1\times e_3,  e_1\times e_4,  e_1\times e_5,  e_2\times e_4$,  and $e_4\times e_6$,  then it also contains the crossing $e_2\times e_5$.
  \item If $\overline{C}_6$ contains the crossings $e_1\times e_3,  e_1\times e_4,  e_2\times e_4$,  and $e_2\times e_5$,  then it also contains at least one of  the crossings $e_1\times e_5,  e_3\times e_5,  e_3\times e_6$.
  \item If $\overline{C}_6$ contains the crossings $e_1\times e_3,  e_1\times e_4,  e_2\times e_5$,  and $e_4\times e_6$,  then it also contains at least one of  the crossings $e_2\times e_4,   e_3\times e_5,  e_3\times e_6$.
  \item If $\overline{C}_6$ contains the crossings $e_1\times e_3,  e_1\times e_4,  e_2\times e_5$,  and $e_3\times e_6$,  then it also contains at least one of  the crossings $e_2\times e_4,  e_2\times e_6,  e_3\times e_5,  e_4\times e_6$.
\end{enumerate}  

\begin{proof}
The proof of claim 1 appears in the paper. We prove claims 2-5 by contradiction, so for claim 2 assume that we do not have the crossing $e_2\times e_5$: Since we have the crossing $e_1\times e_3$, we may assume that the edge $e_2$ is horizontal and 1 and 4 lie above the line through this edge, as shown in Figure~\ref{pf:cl23}. Because we also have the crosssings $e_1\times e_4$ and $e_2\times e_4$, it follows that vertex 5 is in the region labeled S, which is  the part of the cone with vertex 4 and sides passing through 2 and 3  that lies below edge $e_2$.  Since we have the crossing $e_1\times e_5$ but not $e_2\times e_5$, it follows that vertex 6 is in the region labeled N, which is the part of the cone with vertex 5 and sides passing though 1 and 3 that is above edge $e_1$. But then we cannot have the crossing $e_4\times e_6$, a contradiction.

\begin{figure}[htbp]
 \centerline{\epsfig{file =  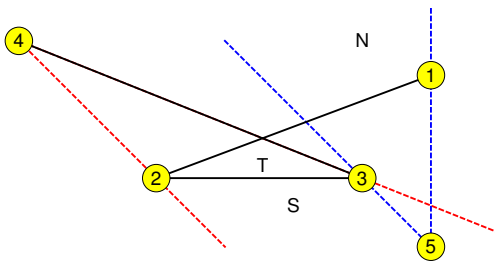,   width = .5\textwidth}}
  \caption{For proof of claims 2 and 3}
  \label{pf:cl23}
\end{figure}

For claim 3, as in the proof of claim 2, having the crossings $e_1\times e_3,  e_1\times e_4,  e_2\times e_4$ implies that vertex 5 lies in the region of Figure~\ref{pf:cl23} labeled S. By asssumption we have neither of the crossings $e_1\times e_5, e_3\times e_5$ but we do have the crossing $e_2\times e_5$, this forces vertex 6 to lie in the region labeled T, which in turn forces the crossing $e_3\times e_6$, a contradiction.

For claim 4, by assumption we have the crossings $e_1\times e_3$ and $e_1\times e_4$ but not  $e_2\times e_4$. It follows that vertex 5 is in one of the regions labeled T and E in Figure~\ref{pf:cl45}, in which the edge $e_2$ is horizontal. First suppose $5\in T$. Since we have crossing $e_2\times e_5$, vertex 6 lies below the horizontal line through edge $e_2$. But then the crossing $e_4\times e_6$ forces the crossing $e_3\times e_6$, a contradiction. So then  $5 \in E$, but now it is impossible to have crossing $e_2\times e_5$ but not $e_3\times e_5$ or $e_3\times e_6$. 

\begin{figure}[htbp]
 \centerline{\epsfig{file =  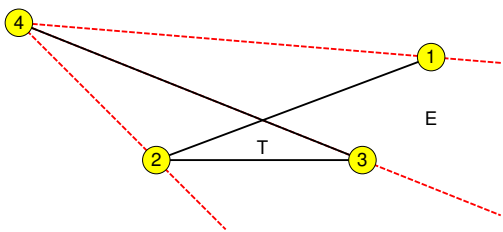,   width = .5\textwidth}}
  \caption{For proof of claims 4 and 5}
  \label{pf:cl45}
\end{figure}

For claim 5, as in the proof for claim 4, we may assume that vertex 5 is in one of the regions in Figure~\ref{pf:cl45} labeled T or E, since by assumption we have crossings $e_1\times e_3$ and $e_1\times e_4$ but not $e_2\times e_4$. If $5\in T$, then crossing $2\times 5$ implies that 6 lies below the horizontal line through edge $e_2$. But then the crossing $e_3\times e_6$ forces either $e_2\times e_6$ or $e_4\times e_6$, a contradiction in either case. So suppose that $5\in E$. In order to have crossing $e_2\times e_5$ but not $e_3\times e_5$,  edge $e_5$ must cross $e_2$ from below, i.e., 5 is below the horizontal line through $e_2$ and 6 is above that line. But the the crossing $e_3\times e_6$ forces the crossing $e_4\times e_6$, again a contradiction. 
\end{proof}

Table~\ref{tab:c6}  shows the twenty-five non-plane geometric realizations of $C_6$,  using the labels given in Theorem~\ref{thm:p6geoms};  it follows from Lemmas~\ref{lem:p5crossingrule},  \ref{lem:p6rule},  \ref{lem:furry},  \ref{lem:insideoutside} and \ref{lem:c6crossingrules} that there are no other realizations.

Table~\ref{tab:c6x} shows the line/crossing graphs corresponding to  each realization in Table~\ref{tab:c6}.
The vertices,  $e_i = \{i,  i+1\},  i = 1,  \ldots,  6$,  run counterclockwise from the top,  and each vertex is labeled with the number of times the edge in the corresponding geometric realization of $C_6$ is crossed. The red,  dashed edges belong to the line graph,  and the black,  solid edges belong to the crossing graph (so the vertex labels are the degrees in the crossing graph). 

As in Appendix~\ref{app:paths},  the line/crossing graphs are again useful tools to determine if $\C_6$ and $\widehat{C}_6$ are equivalent,  or if $\C_6 \prec \widehat{C}_6$. The isomorphisms of $C_6$ are the six rotations,  the reversal map,  and the compositons of the rotations with the reversal map. We need only observe whether one of these maps induces a color-preserving injection of the line/crossing graph of  one realization $\C_6$ to a subgraph of another realization $\widehat{C}_6$ to determine whether $\C_6 \preceq \widehat{C}_6$. For example,  in Table~\ref{tab:c6x},  compare the line/crossing graph labeled 2.2 with the ones labeled 3.1,  3.2,  and 3.4. It is easy to see that the identity map is a color-preserving injection of 2.2 into 3.5,  a rotation is a color-preserving injection from 2.1 into 3.4,  but there is no color-preserving injection from 2.1 into 3.2.

\begin{table}
\begin{center}
\begin{tabular}{c}
\centerline{\includegraphics[width=\textwidth,keepaspectratio]{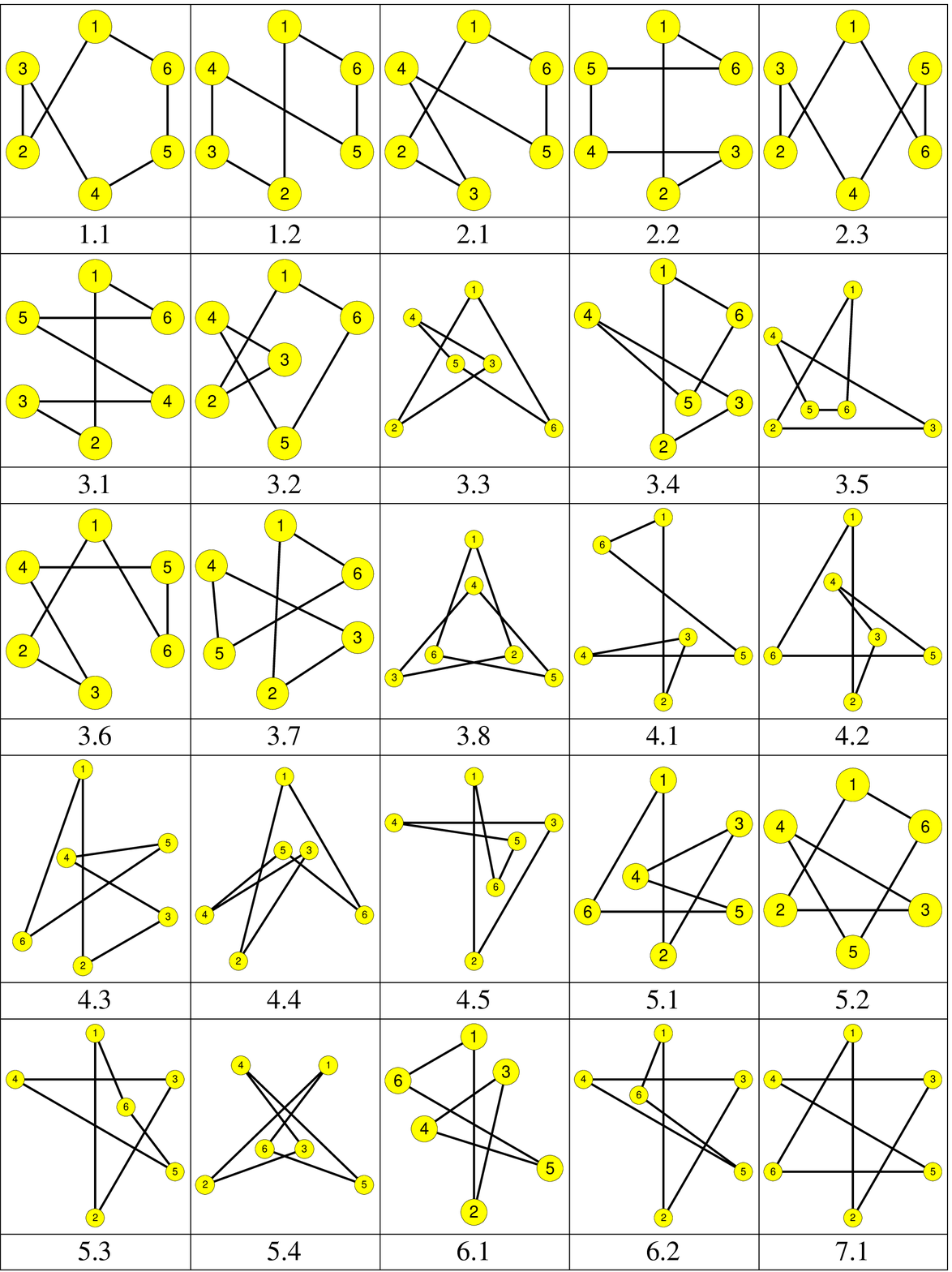}}
\end{tabular}
  \caption{The twenty-five non-plane geometric realizations of $C_6$}
  \label{tab:c6}
\end{center}
\end{table}

\begin{table}
\begin{center}
\begin{tabular}{c}
\centerline{\includegraphics[width=\textwidth,keepaspectratio]{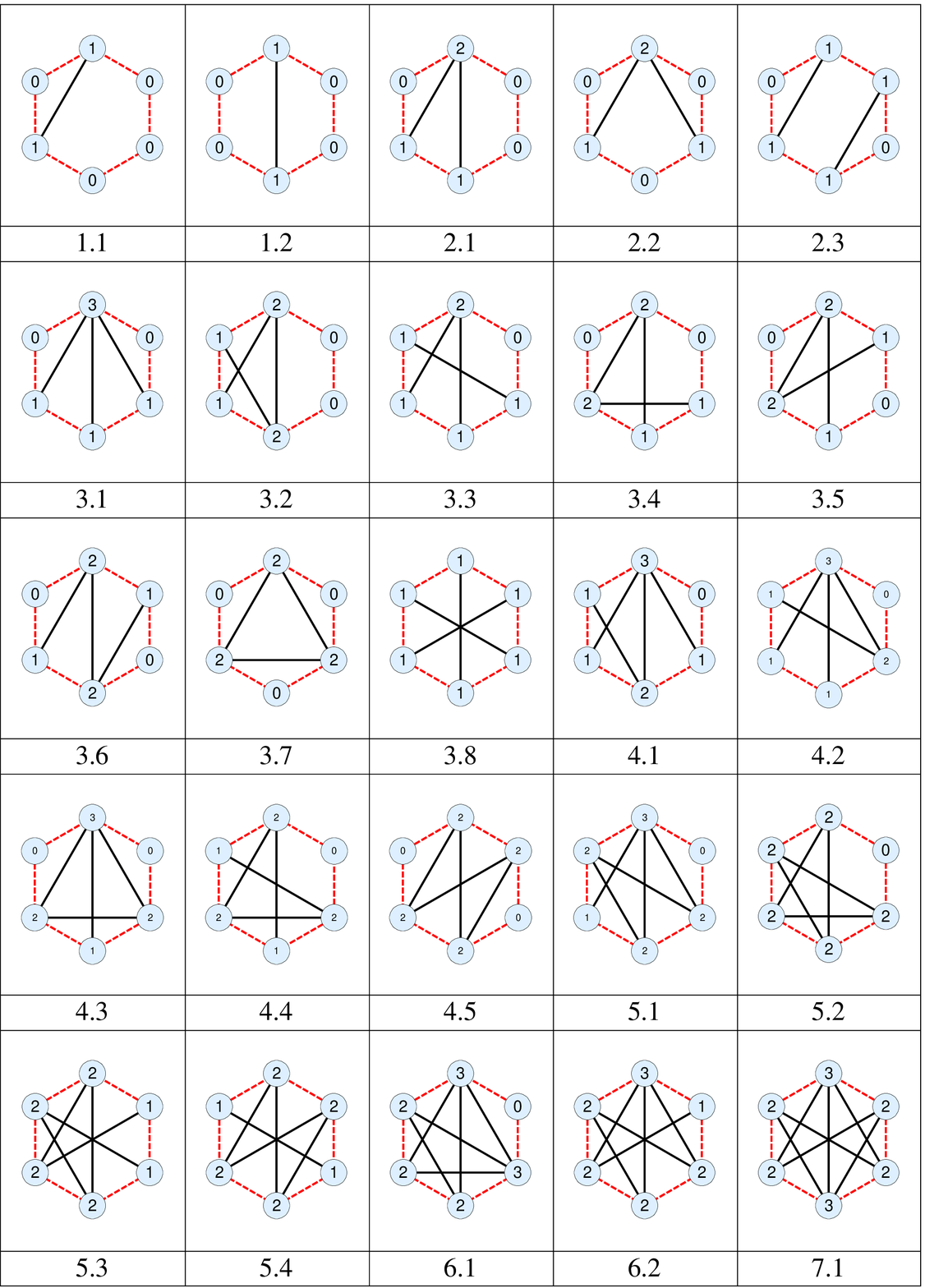}}
\end{tabular}
 \caption{The line/crossing graphs of the geometric realizations in Table~\ref{tab:c6}}
 \label{tab:c6x}
\end{center}
\end{table}

\section{Geometric Realizations and Poset for the clique $K_6$}\label{app:cliques}

In this appendix, we provide the details of the proof of Theorem~\ref{thm:k1tok6posets}.  As noted there, the posets $\mathcal{K}_4$ and $\mathcal{K}_5$ are chains. 
Any vertex bijection is a geometric homomorphism from the plane realization of $K_4$ to the convex realization; with the vertex labeling on the different realizations of $K_5$ given in Figure~\ref{fig:K5Poset}, the identity is a geometric homomorphism from each realization to one with more crossings.

\begin{figure}[htbp] 
\centerline{\epsfig{file =  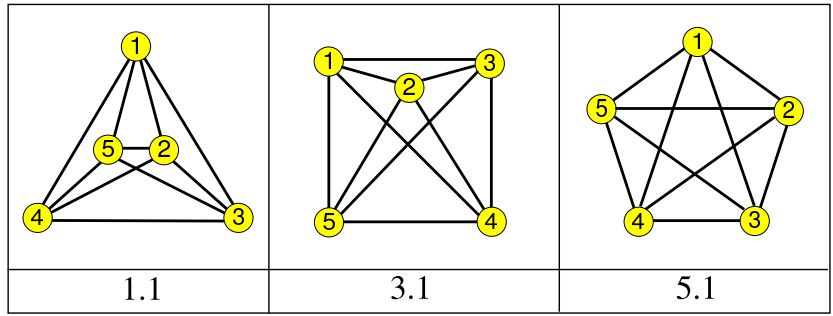,  width = .8\textwidth}}
   \caption{The three realizations of $K_5$.}
   \label{fig:K5Poset}
\end{figure}

Justifying the poset structure of $\mathcal{K}_6$, illustrated by the Hasse diagram in Figure~\ref{fig:HasseK6poset}, requires more work.  Drawings of the realizations are given in Figure~\ref{fig:K6Poset}; with the vertex labelings shown, all covering relations are induced by the identity map, except the eight that induced by the geometric homomorphisms listed in Table~\ref{tab:K6nonids}.

\begin{table}[htdp]
\caption{Non-identity geometric homomorphisms in $\mathcal{K}_6$.}
\begin{center}
\begin{tabular}{|c|c |}
\hline
3.1  & 8.1\\ \hline
1 &  1\\
2 &  6\\
3 &  3\\
4 & 4\\
5 &  5\\
6 & 2 \\ \hline
\end{tabular}
\quad
\begin{tabular}{|c|c c c|}
\hline
4.1 & 7.2 &8.2 & 9.2 \\ \hline
1 & 3 & 6 & 1 \\
2 & 6 & 5 & 5\\
3 & 1 & 2 & 3 \\
4 & 5 & 3 & 4 \\
5 & 4 & 4 & 6 \\
6 & 2 & 1 & 2  \\ \hline
\end{tabular}
\vspace{.1in}

\begin{tabular}{|c|c|}
\hline
5.1 & 10.1 \\ \hline
1 & 1 \\
2 & 2 \\
3 & 3 \\
4 & 4 \\
5 & 6 \\
6 & 5 \\ \hline
\end{tabular}
\quad
\begin{tabular}{|c|c c|}
\hline
5.2 & 8.1 & 8.2\\ \hline
1 & 2 & 6 \\
2 & 3 & 5 \\
3 & 6 & 4 \\
4 & 4 & 3  \\
5 & 5 & 2 \\
6 & 1 & 1 \\ \hline
\end{tabular}
\quad
\begin{tabular}{|c|c c|}
\hline
8.2 & 11.1 & 12.1\\ \hline
1 & 1 & 3 \\
2 & 5 & 2 \\
3 & 4 & 1 \\
4 & 6 & 6 \\
5 & 3 & 5\\
6 & 2 & 4\\ \hline
\end{tabular}
\end{center}
\label{tab:K6nonids}
\end{table}

\begin{figure}[htbp] 
\centerline{\epsfig{file = 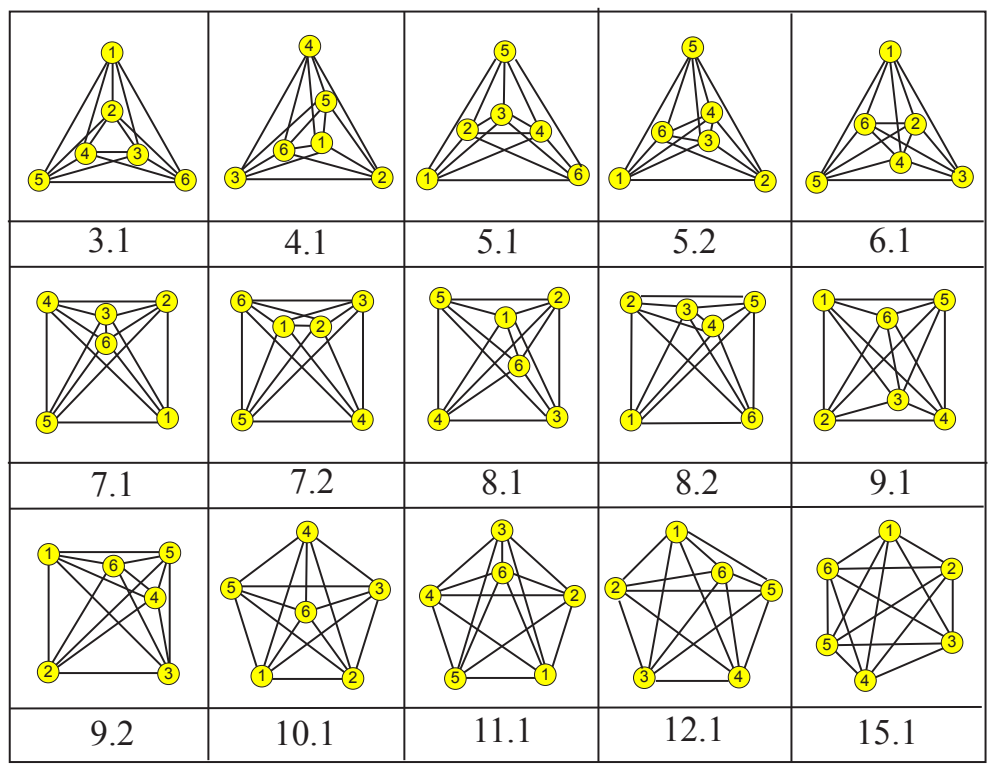,   width =\textwidth}} 
   \caption{The fifteen realizations of $K_6$.}
   \label{fig:K6Poset}
\end{figure}

\newpage

We can justify the absence of covering relations in Figure~\ref{fig:HasseK6poset} by using the contrapositive of various results in Subsection~\ref{subsec:tools}.  To do so, we record the relevant parameters for each element of $\mathcal{K}_6$ in Table~\ref{table:parameters}. Also, to ease the use of part (4) of Proposition~\ref{prop:observe} , we provide the subgraph of uncrossed edges for each of the realizations in Figure~\ref{fig:K60Poset}.

\begin{table}[htdp]
	\caption{Parameters for elements of $\mathcal{K}_6$.}
	\renewcommand{\arraystretch}{1.5}
	\begin{center}
	\begin{tabular}{|c|c|c|c|c|c|c|}
	\hline
	 $\G$ & $cr(\G)$ & $|E_0(\G)|$ & $\widehat{\omega}(\G)$ & $D_0(\G)$ &  $M(\G)$ \\ \hline
	3.1   & 3 & 9 & 4 & [3, 3, 3, 3, 3, 3] &  [1, 1, 1, 1, 1, 1] \\
	4.1  & 4 & 9 &  4 & [4, 3, 3, 3, 3, 2] &  [2, 2, 2, 2, 1, 1] \\
	5.1    & 5 & 10 &  5 & [5, 3, 3, 3, 3, 3] &  [2, 2, 2, 2, 2, 0] \\
	5.2  & 5 & 9 &  4  & [4, 4, 3, 3, 2, 2] &  [3, 3, 2, 2, 2, 2] \\
	6.1   & 6 & 9 & 4  & [4, 4, 4, 2, 2, 2] &  [3, 3, 3, 3, 3, 3] \\ \hline

	7.1    & 7 & 7 & 4 & [3, 3, 3, 2, 2, 1] &  [3, 3, 3, 3, 2, 1] \\
	7.2  & 7 & 7 & 4 & [3, 3, 2, 2, 2, 2] &  [3, 3, 3, 3, 2, 2] \\
	8.1   & 8 & 7 & 4  & [3, 3, 3, 2, 2, 1] &  [4, 4, 3, 3, 2, 2] \\
	8.2  & 8 & 8 &  5 & [4, 3, 3, 2, 2, 2] &  [3, 3, 3, 3, 3, 2] \\
	9.1   & 9 & 8 & 4 & [3, 3, 3, 3, 2, 2] &  [4, 4, 4, 4, 2, 2] \\
	9.2  & 9 & 8 &  5 & [4, 3, 3, 2, 2, 2] &  [4, 4, 3, 3, 3, 3] \\ \hline

	10.1	 & 10& 5 & 5 & [2, 2, 2, 2, 2, 0]  & [3, 3, 3, 3, 3, 1]\\
	11.1 & 11& 6 & 5 & [3, 2, 2, 2, 2, 1] &   [4, 4, 3, 3, 3, 2]\\
	12.1 &12& 7 & 5 & [3, 3, 2, 2, 2, 2] &   [4, 4, 4, 4, 3, 3]\\ \hline

	15.1 &        15& 6 &  6 & [2, 2, 2, 2, 2, 2] &   [4, 4, 4, 4, 4, 4]\\
	\hline
	\end{tabular}
	\end{center}
	\label{table:parameters}
\end{table}

\begin{figure}[htbp] 
\centerline{\epsfig{file = 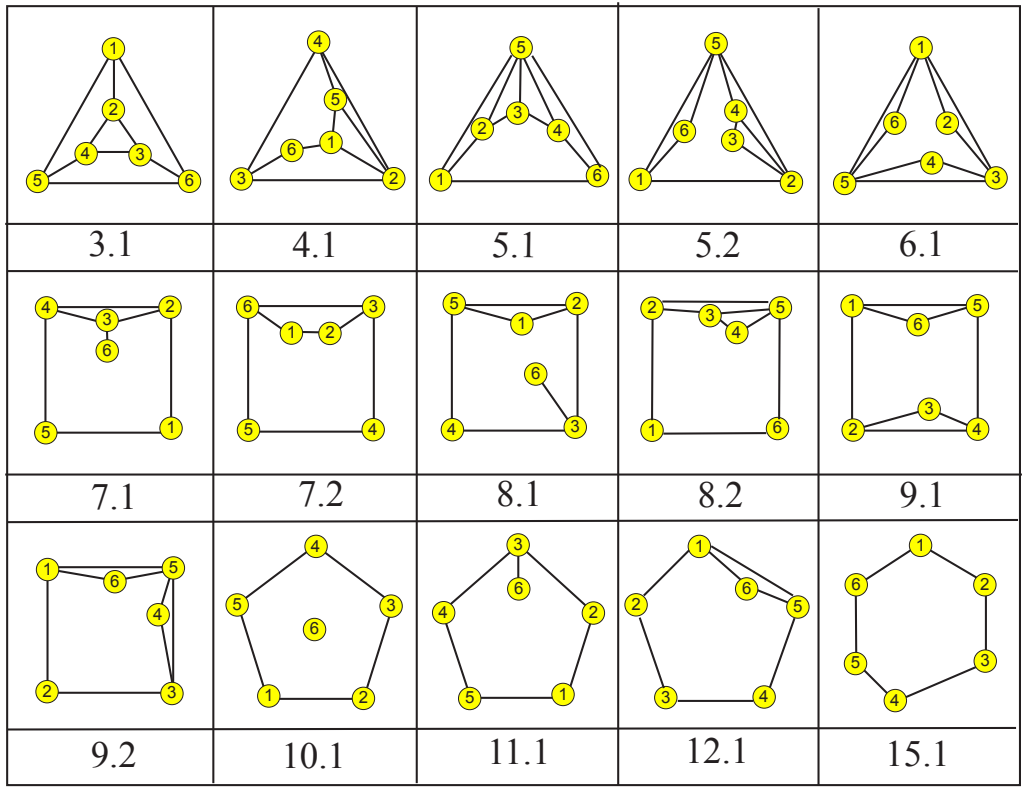,   width =\textwidth}} 
   \caption{The subgraphs of uncrossed edges of realizations of $K_6$.}
   \label{fig:K60Poset}
\end{figure}

\newpage

Justifications for cases of nonprecedence in the poset $\mathcal{K}_6$ are recorded in Table \ref{table:nonprecedence}; each part of Proposition~\ref{prop:observe} and each part of Corollary~\ref{cor:vector} is used at least once.  In some cases, there are several ways of justifying nonprecedence but only one is given.
The following examples indicate how to read this table:
\begin{itemize}
\item the blank in the `$(4.1, 3.1)$' entry means that $4.1 \not \prec 3.1$ is justified simply by the total number of edge crossings (that is, using part (1) of Proposition~\ref{prop:observe}),
\item ``\ref{cor:vector}(\ref{cor1})" in the `$(3.1, 4.1)$' entry means that $3.1 \not \prec 4.1$ is justified by an application of part (1) of Corollary \ref{cor:vector};
\item ``\ref{prop:observe}(\ref{c})" in the `$(3.1, 5.1)$' entry means that $3.1 \not \prec 5.1$ is justified by an application of part (3) of Proposition \ref{prop:observe};
\item ``$\prec$" in the `$(3.1, 8.1)$' entry means that $3.1$ is covered by $8.1$; that is, there is a geometric homomorphism 
 $3.1 \to 8.1$ (this one is recorded in Table \ref{tab:K6nonids}), but no realization $\overline{K}_6$ such that 
 $3.1 \prec \overline{K}_6 \prec 8.1$;
 \item  ``$ \circ $'' in the `$(3.1, 10.1)$' entry means that a geometric homomorphism $3.1 \to 10.1$ can be obtained by composing two geometric homomorphisms (in this case, the two identity maps $3.1 \to 7.1$ and $7.1 \to 10.1$).
 \end{itemize}

\begin{sidewaystable}[htdp]
\renewcommand{\arraystretch}{1.5}
\caption{Justifications for nonprecedence in $\mathcal{K}_6$.}
\begin{center}
\begin{tabular}{|c||c|c|c|c|c||c|c|c|c|c|c||c|c|c||c|}
\hline
        & 3.1 & 4.1 & 5.1& 5.2 & 6.1 & 7.1 & 7.2 & 8.1& 8.2 & 9.1 & 9.2 & 10.1 & 11.1 & 12.1 & 15.1 \\ \hline \hline
 3.1 & =  &  \ref{cor:vector}(\ref{cor1}) & \ref{prop:observe}(\ref{c}) &  \ref{cor:vector}(\ref{cor1}) & \ref{cor:vector}(\ref{cor1}) &  $\prec$ &  $\prec$ & $\prec$  &  \ref{cor:vector}(\ref{cor1})  & $\prec$  &  \ref{cor:vector}(\ref{cor1})  & $\circ$  &   $ \circ $ & $ \circ $ & $ \circ$   \\ \cline{2-16}
 4.1& \multicolumn{1}{c |}{}   & = &  \ref{prop:observe}(\ref{c}) & \ref{cor:vector}(\ref{cor1}) &  \ref{cor:vector}(\ref{cor1}) & $\prec$  & $\prec$  &  $\prec$ & $\prec$  & \ref{prop:observe}(\ref{d})  &  $\prec$ & $\circ$ &   $ \circ $ & $ \circ $ & $ \circ$  \\ \cline{3-16}
 5.1 & \multicolumn{2}{c |}{}    & = & \ref{prop:observe}(\ref{e}) & \ref{prop:observe}(\ref{e}) &  \ref{prop:observe}(\ref{e}) & \ref{prop:observe}(\ref{e})  & \ref{prop:observe}(\ref{e})   &  $\prec$ & \ref{prop:observe}(\ref{e})  &  $\prec$ & $\prec$ & $ \circ $ & $ \circ$  &  $\ \circ$     \\ \cline{4-16}
 5.2 & \multicolumn{2}{c |}{}     & \ref{prop:observe}(\ref{c}) & = & \ref{cor:vector}(\ref{cor1}) & \ref{cor:vector}(\ref{cor2}) & \ref{prop:observe}(\ref{d})+(\ref{cor2})  & $\prec$  & $\prec$  & $\prec$  & \ref{prop:observe}(\ref{d}) & \ref{cor:vector}(\ref{cor2}) & $ \circ $  &$ \circ $  &  $ \circ$   \\ \cline{4-16}
 6.1 & \multicolumn{4}{c |}{}   & = &  \ref{cor:vector}(\ref{cor2}) & \ref{cor:vector}(\ref{cor2}) &  \ref{cor:vector}(\ref{cor2}) &  \ref{cor:vector}(\ref{cor2}) &  \ref{cor:vector}(\ref{cor1}) & $\prec$  &  \ref{cor:vector}(\ref{cor2}) & \ref{cor:vector}(\ref{cor2})  & $ \circ $ & $ \circ$   \\  \hline
 \hline 
 7.1 & \multicolumn{5}{c ||}{}   & = &  \ref{cor:vector}(\ref{cor1}) & \ref{prop:observe}(\ref{d}) & \ref{prop:observe}(\ref{c}) &  \ref{prop:observe}(\ref{c})&  \ref{prop:observe}(\ref{c}) & $\prec$  & $\prec$   &  \ref{cor:vector}(\ref{cor1}) &  \ref{cor:vector}(\ref{cor1})   \\ \cline{7-16}
 7.2 &  \multicolumn{5}{c ||}{}  & \ref{cor:vector}(\ref{cor2}) & = &  \ref{cor:vector}(\ref{cor1}) & \ref{prop:observe}(\ref{c}) &  \ref{prop:observe}(\ref{c}) &  \ref{prop:observe}(\ref{c}) &  \ref{cor:vector}(\ref{cor2}) & \ref{prop:observe}(\ref{d})   &  \ref{prop:observe}(\ref{d})  &   $\prec$  \\ \cline{7-16}
 8.1 &  \multicolumn{7}{c |}{}      & = & \ref{prop:observe}(\ref{c}) & \ref{prop:observe}(\ref{c}) &  \ref{prop:observe}(\ref{c}) &  \ref{cor:vector}(\ref{cor2}) & $\prec$   &  \ref{prop:observe}(\ref{d}) &  \ref{prop:observe}(\ref{d})  \\ \cline{9-16}
 8.2 &  \multicolumn{7}{c |}{}   &  \ref{cor:vector}(\ref{cor2}) & = &  \ref{prop:observe}(\ref{e}) &  \ref{prop:observe}(\ref{d}) &  \ref{cor:vector}(\ref{cor2}) &  $\prec$  &  $\prec$ &  $\circ$    \\ \cline{9-16}
 9.1 &  \multicolumn{9}{c |}{}    & = &  \ref{cor:vector}(\ref{cor2}) &  \ref{cor:vector}(\ref{cor2}) &  \ref{cor:vector}(\ref{cor2})  & $\prec$  & $ \circ $   \\ \cline{11-16}
 9.2 &  \multicolumn{9}{c |}{}    &  \ref{prop:observe}(\ref{e}) & = &  \ref{cor:vector}(\ref{cor2}) &  \ref{cor:vector}(\ref{cor2})  & $\prec$  &  $\circ $   \\ \cline{10-16}
 \hline \hline
 10.1 &  \multicolumn{11}{c ||}{}   & = &  \ref{prop:observe}(\ref{c})  & \ref{prop:observe}(\ref{c}) &  \ref{prop:observe}(\ref{c})   \\ \cline{13-16}
 11.1 & \multicolumn{12}{c |}{}    &  = &  \ref{prop:observe}(\ref{c})   & \ref{prop:observe}(\ref{d})  \\ \cline{14-16}
 12.1 &  \multicolumn{13}{c |}{}     & = &  $\prec$   \\ \cline{15-16}
 15.1 & \multicolumn{14}{c ||}{}  & = \\ 
 \hline
\end{tabular}
\end{center}
\label{table:nonprecedence}
\end{sidewaystable}

One entry in this table deserve further elaboration.  To  show $5.2 \not \prec 7.2$, we first look at the uncrossed subgraphs in Figure \ref{fig:K60Poset}.  Note that the uncrossed subgraph of $7.2$ consists of a 6-cycle with one `diameter' chord (the edge $e_{3,6}$); the uncrossed subgraph of  $5.2$ has only one 6-cycle, namely $(1,2,3,4,5,6)$, which has only one `diameter' chord, namely $e_{2,5}$.  
Hence, the only possible pre-image of the uncrossed four-cycle  $(1, 2, 3, 6)$ of $7.2$ must be either the uncrossed  4-cycle $(2, 3, 4, 5)$ or the uncrossed 4-cycle $(1, 2, 5, 6)$ of $5.2$.
Now, in $7.2$,  $e_{1,3}$ and $e_{2,6}$ cross only each other, so  $cr(e_{1,3}) = cr(e_{2,6}) = 1$.  However, in $5.2$, there are only two possible pre-images of this pair of edges, namely $e_{2,4}, e_{3,5}$ or $e_{1,5}, e_{2,6}$.  Since $cr (e_{3,5}) =cr (e_{2,6}) = 2$, by part (2) of Proposition \ref{prop:observe}, $5.2 \not \prec 7.2$.

\end{appendices}
\cleardoublepage

\bibliographystyle{plain}
\bibliography{GeomPosetPaper}

\end{document}